\newtheorem{theorem}{Theorem}[section]
\newtheorem{lemma}[theorem]{Lemma}
\newtheorem{proposition}[theorem]{Proposition}
\newtheorem{corollary}[theorem]{Corollary}
\newtheorem{remark}{Remark}
\def \<{\langle}
\def \>{\rangle}
\def \a{\alpha }
\newcommand{\bea}{\begin{eqnarray}}
\newcommand{\eea}{\end{eqnarray}}
\newcommand{\be}{\begin {equation}}
\newcommand{\ee}{\end{equation}}
\newcommand{\wt}{{\rm {wt} }   }
\newcommand{\Z}{\Bbb Z}
\newcommand{\W}{\mathcal W}
\newcommand{\mip}{\overline{M(1)}}
\newcommand{\smip}{\overline{SM(1)}}
\newcommand{\NS}{\frak{ns} }
\newcommand{\N}{{\Bbb Z}_{\ge 0} }
\newcommand{\C}{\Bbb C}
\newcommand{\WW}{\boldsymbol{ \mathcal{W}}}
\newcommand{\la}{\langle}
\newcommand{\ra}{\rangle}
\newcommand{\triplet}{\mathcal{W}(p)}
\newcommand{\striplet}{\mathcal{SW}(m)}
\newcommand{\hf}{\mbox{$\frac{1}{2}$}}
\newcommand{\thf}{\mbox{$\frac{3}{2}$}}
\newcommand{\vak}{\bf 1}
\begin{document}
  \title[The structure of Zhu's algebras for certain $\W$-algebras] {The structure of  Zhu's algebras for certain $\W$-algebras}
\author{Dra\v{z}en Adamovi\'c and Antun Milas}
\address{Department of Mathematics, University of Zagreb, Croatia}
\email{adamovic@math.hr}

\address{Department of Mathematics and Statistics,
University at Albany (SUNY), Albany, NY 12222}
\email{amilas@math.albany.edu}

\thanks{The second author was supported in part by NSA and NSF grants.}

\begin{abstract}
We introduce a new approach that allows us to determine the structure of Zhu's algebra for certain vertex operator (super)algebras which admit horizontal $\mathbb{Z}$-grading. By using this method and an earlier description of Zhu's algebra for the singlet $\W$-algebra, we completely describe the structure of Zhu's algebra for the triplet vertex algebra $\triplet$. As a consequence, we prove that Zhu's algebra $A(\triplet)$ and the related Poisson algebra $\mathcal{P}(\triplet)$ have the same dimension.  We also completely describe Zhu's algebras for the $N=1$ triplet vertex operator superalgebra
$\striplet$. Moreover, we obtain similar results for the $c=0$ triplet vertex algebra $\WW_{2,3}$, important in logarithmic conformal field theory.
Because our approach is "internal"  we had to employ several constant term identities for purposes of getting right upper bounds on ${\rm dim}(A(V))$.

This work is, in a way, a continuation of the results published in \cite{AdM-triplet}.

\end{abstract}

\maketitle

\section{Introduction}

In this work we address two important algebraic objects that can be associated
to any conformal vertex (super)algebra $V$, both introduced in a seminal paper by Zhu \cite{Z}:

\begin{itemize}

\item[(i)] Zhu's associative algebra $A(V)$, and

\item[(ii)] commutative Poisson algebra $\mathcal{P}(V)=V/C_2(V)$.

\end{itemize}

The Zhu's  algebra $A(V)$ is instrumental in representation theory of vertex algebra and has been a subject of numerous
papers. On the other hand, $\mathcal{P}(V)$ is primarily used for purposes of modular invariance of graded
dimensions \cite{Z}. The two algebras are of course  closely related; we always have a natural surjective map from $\mathcal{P}(V)$ to ${\rm gr}A(V)$ (the associated
graded algebra of $A(V)$), giving
\be \label{ineq}
{\rm dim}(\mathcal{P}(V)) \geq {\rm dim}(A(V)),
\ee
at least if $\mathcal{P}(V)$ is finite-dimensional (i.e. $V$ is $C_2$-cofinite).

Fairly recently, Gaberdiel and Gannon \cite{GG} have initiated a thorough study of possible relationships between
$\mathcal{P}(V)$ and $A(V)$, by focusing  primarily to rational  vertex algebras of affine type.
Although they have observed that for many familiar examples - such as Virasoro minimal models - these two algebras will have
the same dimension (and thus the above map will be an isomorphism), there are many instances for which this is false
(take for instance $V$ to be the level one affine vertex algebra associated to Lie algebra of type $E_8$).
The main observation is that the discrepancy between two algebras is somewhat  "controlled" by  twisted $V$-modules.
They also provided a conjecture for the equality of dimensions for certain vertex algebras associated to representations
of affine algebras at positive integral levels. Some of their conjectures have been recently settled  in  \cite{FFL} and \cite{FL}.

In view of (\ref{ineq}) we can also contemplate whether $\mathcal{P}(V)$ and ${\rm gr} A(V)$ are isomorphic for $V$ being $C_2$-cofinite, but not
necessarily rational. Such vertex algebras have recently attracted a lot of attention in connection
with logarithmic conformal field theory \cite{HLZ},\cite{Miy}. But even for the well-known triplet vertex algebra $\triplet$ (cf. \cite{FGST-triplet}, \cite{FHST}, \cite{AdM-triplet}),
the two algebras have not been determined in full yet for all $p$ (see however \cite{AdM-triplet}) .
The same issue can be also addressed for $C_2$-cofinite vertex operators superalgebras.
Again, very little is known apart from vertex algebras associated to $N=1$ minimal models (cf. \cite{A-1997} and \cite{M}).

The aims of this paper include:  (a) We present a new approach for determining Zhu's algebra for vertex algebra which admit horizontal $\mathbb{Z}$-grading
(and some additional properties). This method is successfully applied to triplet vertex algebras. (b) We settle several  conjectures
from our previous work needed for better description of $\mathcal{P}(V)$ in the case of the triplet algebras $\triplet$.
(c) Finally, we show how to extend our results to vertex operator superalgebras.

So let us briefly outline the main results.

We closely follow \cite{AdM-triplet} and \cite{AdM-striplet}, and as before let $\triplet$ be the triplet vertex operator algebra
and $\striplet$ be the $N=1$ supertriplet algebra. From the structural results about Zhu's algebras from  \cite{AdM-triplet}, \cite{AdM-tstriplet} and \cite{AdM-striplet}, we have that for the complete  description of Zhu's algebras $A(\triplet)$, $A(\striplet)$ and $A_{\sigma}(\striplet)$ one has to describe the center of these algebras. Let us describe our approach in the case of triplet vertex algebra. Let $\mip \subset \triplet$ be the singlet vertex operator algebra (cf. \cite{A-2003}, \cite{AdM-2007}). We investigate a natural homomorphism of Zhu's algebras
$\Phi : A(\mip) \rightarrow A(\triplet)$ and identify  the center of $A(\triplet)$ as a subalgebra of   $A(\mip) /  \ \mbox{Ker}(\Phi)  $. We prove that the kernel $\mbox{Ker}(\Phi)$ is a principal ideal in Zhu's algebra $A(\mip)$, and  show that the dimension of
$A(\mip) /  \ \mbox{Ker}(\Phi)  $  is $ 4 p -1$. This  easily gives the description of the center and proves  that the dimension of  $A(\triplet)$ is $6 p -1$. We also prove that:
\begin{theorem} For $p \geq 2$,
$$\dim A( \triplet) = \dim \mathcal{P}(\triplet).$$
\end{theorem}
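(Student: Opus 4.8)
The plan is to prove the two dimensions agree by computing each side separately and comparing. From the excerpt we already know $\dim A(\triplet) = 6p - 1$, obtained via the homomorphism $\Phi : A(\mip) \to A(\triplet)$, the identification of the center as a subalgebra of $A(\mip)/\mbox{Ker}(\Phi)$ (whose dimension is $4p-1$), and the description of the non-central part. So the real content of this theorem is to show $\dim \mathcal{P}(\triplet) = 6p - 1$ as well. First I would recall the known presentation of $\triplet$: it is strongly generated by the Virasoro vector $\om$ and the three weight-$(2p-1)$ primary fields $F^\pm, H$ generating an $\frak{sl}_2$-triplet, and $\mathcal{P}(\triplet) = \triplet/C_2(\triplet)$ inherits generators $\bar\om, \bar F^\pm, \bar H$ as a commutative (Poisson) algebra. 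The strategy is to pin down the relations among these four generators in $\mathcal{P}(\triplet)$.

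The key steps, in order: (1) Write down the images in $\mathcal{P}(\triplet)$ of the singular vectors / defining null relations of $\triplet$ inside the free field (Feigin–Fuchs) realization; the crucial one is the relation expressing a power of $\bar\om$ (namely $\bar\om^p$ up to scalar, reflecting the $c_{1,p}$ central charge and the weight-$2p$ null field $\om^{(s)}$) and the relations coming from the operator product expansions $F^a \cdot F^b$ which, modulo $C_2$, become polynomial identities in $\bar\om$ times the structure constants of $\frak{sl}_2$. (2) Use these to show $\mathcal{P}(\triplet)$ is spanned, as a vector space, by $\{\bar\om^j : 0 \le j \le 3p-1\}$ together with $\{\bar\om^j \bar F^\pm, \bar\om^j \bar H : 0 \le j \le p-1\}$ or a similar explicit finite list summing to $6p-1$; this gives the upper bound $\dim \mathcal{P}(\triplet) \le 6p-1$. (3) Invoke the inequality (\ref{ineq}), $\dim \mathcal{P}(\triplet) \ge \dim A(\triplet) = 6p-1$, to conclude equality, and hence that the monomials listed are actually a basis.

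The main obstacle I expect is step (2): getting the \emph{sharp} upper bound $6p-1$ on $\dim \mathcal{P}(\triplet)$ rather than something larger. The naive relations from the $C_2$-algebra presentation may not obviously cut the spanning set down to the right size — one typically needs to feed in extra null-vector relations (the images of higher singular vectors in the free-field module), and organizing these into a Gröbner-basis–type argument over $\mathbb{C}[\bar\om, \bar F^+, \bar F^-, \bar H]$ can be delicate. This is presumably exactly where the ``constant term identities'' mentioned in the abstract enter: they are needed to verify that certain combinations of the generating fields vanish modulo $C_2(\triplet)$, thereby trimming the spanning set to cardinality $6p-1$. Once the upper bound matches the already-established lower bound $\dim A(\triplet) = 6p - 1$, the theorem follows immediately; note the hypothesis $p \ge 2$ is needed because at $p=1$ the triplet degenerates (the weight of $F^\pm, H$ drops and the structure is different).
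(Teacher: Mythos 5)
Your proposal follows essentially the same route as the paper: the paper establishes $\dim \mathcal{P}(\triplet) \le 6p-1$ from the known generators and relations of $\triplet/C_2(\triplet)$ plus the crucial extra relations $\overline{\omega}^{p}\overline{E}=\overline{\omega}^{p}\overline{F}=\overline{\omega}^{p}\overline{H}=0$, whose nonzero coefficient is verified exactly by the constant term identity (Theorem \ref{identity-triplet}) read off in $A(\overline{M(1)})$, and then combines this with $\dim A(\triplet)=6p-1$ and the surjection $\mathcal{P}(V)\to {\rm gr}A(V)$, just as you outline. Only a bookkeeping slip in your step (2): the spanning set should be $\{\overline{\omega}^{j}\}_{0\le j\le 3p-2}$ together with $\{\overline{\omega}^{j}\overline{X}\}_{0\le j\le p-1,\ X\in\{E,F,H\}}$, which indeed totals $6p-1$.
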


For vertex operator superalgebras (with suitable grading) the situation is a bit different because there are four distinct algebras of interest here: the usual untwisted Zhu's algebra $A(V)$, its $\sigma$-twisted
counterpart $A_{\sigma}(V)$, the Poisson superalgebra $\mathcal{P}(V)$ and its even subalgebra $\mathcal{P}_0(V)$. It can be shown that there is again a surjective map
from $\mathcal{P}_0(V)$ to ${\rm gr}A(V)$, giving an upper bound on ${\rm dim}(A(V))$. But more important object of consideration turns out to be $A_{\sigma}(V)$.
By applying similar approach we show that the dimension of  $A(\striplet)$ is $ 6 m +1$ and the dimension of $A_{\sigma} (\striplet)$ is $12 m + 8$. In this way we present a positive answer on  conjectures from papers \cite{AdM-tstriplet}-\cite{AdM-striplet}.

\begin{theorem} For all $m \in \mathbb{N}$,
$$\dim A_{\sigma}( \striplet) = \dim \mathcal{P}(\striplet).$$
\end{theorem}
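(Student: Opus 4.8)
The plan is to establish both sides of the equality separately and then compare. For the right-hand side, I would first compute $\dim \mathcal{P}(\striplet)$ directly from the $C_2$-algebra structure. By the strong generation results for the $N=1$ supertriplet $\striplet$ (following the analysis in \cite{AdM-striplet}, \cite{AdM-tstriplet}), the Poisson superalgebra $\mathcal{P}(\striplet) = \striplet/C_2(\striplet)$ is generated by the images of the (even) Virasoro vector, the three even primary generators of conformal weight governed by $m$, and the odd primary generator(s) coming from the $N=1$ structure. I would write down the defining relations in $\mathcal{P}(\striplet)$ — these descend from the singular vectors and the null relations in $\striplet$ — and then count. This is largely a commutative (super)algebra computation: presenting $\mathcal{P}(\striplet)$ as a quotient of a polynomial ring by an explicit ideal, and using the constant term identities alluded to in the introduction to pin down that the relations are exactly as expected (no extra collapse), giving a closed form of the type $\dim \mathcal{P}(\striplet) = 12m + 8$.

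For the left-hand side, I would invoke the "internal" method outlined in the introduction for the untwisted case and adapt it to the $\sigma$-twisted setting. Let $\smip \subset \striplet$ be the $N=1$ singlet subalgebra; there is a natural homomorphism of $\sigma$-twisted Zhu's algebras $\Phi_\sigma : A_\sigma(\smip) \to A_\sigma(\striplet)$ whose image, together with the known structural description of $A_\sigma(\striplet)$ from \cite{AdM-tstriplet}, \cite{AdM-striplet}, reduces the problem to identifying the center of $A_\sigma(\striplet)$ as a subalgebra of $A_\sigma(\smip)/\mathrm{Ker}(\Phi_\sigma)$. The key steps are: (1) show $\mathrm{Ker}(\Phi_\sigma)$ is a principal ideal in $A_\sigma(\smip)$, with an explicit generator coming from the appropriate null vector; (2) compute $\dim A_\sigma(\smip)/\mathrm{Ker}(\Phi_\sigma)$ — this is where a constant term identity is needed to produce the sharp upper bound, matching a lower bound coming from explicit modules; (3) assemble the center and the non-central part to conclude $\dim A_\sigma(\striplet) = 12m + 8$.

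Comparing the two counts gives the theorem. The main obstacle I expect is step (2) on the Zhu's algebra side: getting the \emph{right} (i.e. tight) upper bound on $\dim A_\sigma(\smip)/\mathrm{Ker}(\Phi_\sigma)$. Since the approach is internal — working inside the vertex algebra rather than using an external classification of modules — one does not get the bound for free from representation theory; instead one must show that a spanning set of the claimed size actually spans, which forces the use of a nontrivial constant term (Dyson/Morris-type) identity to reduce words in the generators modulo $O_\sigma$. A secondary subtlety is bookkeeping in the twisted setting: the half-integer mode shifts in $A_\sigma$ change which vectors are "new" generators and alter the parity grading, so the analogue of the "$4p-1$" intermediate count in the untwisted triplet case becomes a different number here, and one must be careful that the twisted Zhu relations are applied correctly. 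Once these bounds are sharp on both sides, the equality $\dim A_\sigma(\striplet) = \dim \mathcal{P}(\striplet)$ is immediate.
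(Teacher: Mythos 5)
Your plan for the twisted Zhu algebra side is essentially the paper's: the homomorphism $\Phi : A_{\sigma}(\smip) \to A_{\sigma}(\striplet)$, the determination of $\mbox{Ker}(\Phi)$ as the principal ideal generated by $r([\omega])*[H]$ (the inclusion one way from the general Lemma \ref{lema-0}/Proposition \ref{prop-1} machinery, the other way from the constant term identities of Theorem \ref{tw-nova} via Lemma \ref{lem-tw-1}), and then the decomposition $A_{\sigma}(\striplet)=A_{\sigma}(\striplet)_{-1}\oplus A_{\sigma}(\striplet)_0\oplus A_{\sigma}(\striplet)_1$ giving $8m+4+2(2m+2)=12m+8$. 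That part is sound, modulo the remark that the paper's lower bound on $\dim A_{\sigma}(\smip)/\mbox{Ker}(\Phi)$ comes from the exact determination of the kernel rather than from evaluating on explicit twisted modules, but either mechanism works.

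The genuine gap is on the Poisson algebra side. You propose to compute $\dim \mathcal{P}(\striplet)=12m+8$ \emph{directly}, by presenting it as a quotient of a free (super)commutative algebra by the "expected" relations and using constant term identities to show there is "no extra collapse." But the constant term identities only ever produce relations: they show, e.g., that $F_{-2}\widehat{E}\equiv A_m L(-2)^{m+1}H \ (\mbox{mod}\ C_2(\striplet))$ with $A_m\neq 0$, hence $\overline{\omega}^{m+1}\overline{H}=0$ in $\mathcal{P}(\striplet)$; this yields only the upper bound $\dim\mathcal{P}(\striplet)\le 12m+8$. They give you no control over whether $C_2(\striplet)$ contains further, unexpected elements, so the claim that the relations are \emph{exactly} as expected is unjustified as stated, and without a lower bound on $\dim\mathcal{P}(\striplet)$ your two separate counts cannot be compared as equalities. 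The paper closes precisely this hole with the surjection $\mathcal{P}(V)\to {\rm gr}(A_{\sigma}(V))$ (the proposition at the start of the twisted section), which gives $\dim\mathcal{P}(\striplet)\ge\dim A_{\sigma}(\striplet)=12m+8$ for free once the twisted Zhu algebra is known; combined with the upper bound from the relations, equality follows. So your proof becomes correct once you either invoke that surjection (in which case the "direct" computation of $\mathcal{P}(\striplet)$ reduces to the upper bound) or supply some other lower-bound mechanism, which you have not done.
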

This equality is known to hold for $N=1$ vertex superalgebras associated to $N=1$ minimal models (cf. \cite{M}).

Interestingly, the natural homomorphism  from  $\mathcal{P}_0(\striplet)$ onto ${\rm gr}(A(\striplet))$ yields a nontrivial kernel.
Namely, we have
\begin{corollary} For all $m \in \mathbb{N}$,
\be
\dim A(\striplet) < \dim \mathcal{P}_0(\striplet).
\ee
\end{corollary}
Our methods can be of course applied for "logarithmic extension" of $(p,q)$ Virasoro minimal models (cf. \cite{AdM-2009}, \cite{FGST-log}, \cite{GRW}). Thus in Section \ref{log-ext-min} we completely describe the Zhu algebra for the vertex algebra $\WW_{2,3}$ of central charge zero. As a consequence, we prove the following result, predicted in the physics literature
\begin{corollary} The vertex algebra $\WW_{2,3}$ admits
a logarithmic module of $L(0)$ nilpotent rank $3$ (here $L(0)$ denotes a generator of the Virasoro algebra).
\end{corollary}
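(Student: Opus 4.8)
The plan is to read the corollary off the explicit description of Zhu's algebra $A(\WW_{2,3})$ established in Section~\ref{log-ext-min}, via Zhu's correspondence between $A(V)$-modules and top levels of $\N$-gradable $V$-modules. Recall that for any $\N$-gradable $\WW_{2,3}$-module $M=\bigoplus_{n\ge 0}M(n)$ the zero mode $o(\om)=L(0)$ acts on the top space $M(0)$ exactly as multiplication by the image $[\om]$ of the conformal vector in $A(\WW_{2,3})$. Hence it suffices to produce a finite-dimensional $A(\WW_{2,3})$-module $U$ on which $[\om]-[\om]_{\mathrm{ss}}$ is nilpotent of order exactly $3$, i.e. on which $[\om]$ carries a Jordan block of size $3$ (and none larger); induction then yields the desired logarithmic $\WW_{2,3}$-module.

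First I would isolate, inside the structure theorem for $A(\WW_{2,3})$, the block on which $[\om]$ fails to be semisimple. Since that theorem presents $A(\WW_{2,3})$ as a finite-dimensional associative algebra with an explicit list of (indecomposable, generally non-commutative, matrix-type) summands, together with the defining relations, one can compute the minimal polynomial of $[\om]$ on each indecomposable projective. The point to verify is that on at least one of them this minimal polynomial has the factor $(x-h)^3$ for the appropriate weight $h$, and no higher power; I expect this to come out of the very relations and constant-term identities used to pin down $\dim A(\WW_{2,3})$, since the associated graded (Poisson) picture already forces non-semisimplicity and the relations should make the precise Loewy length equal to $3$. Choosing $U$ to be such an indecomposable projective (or simply a cyclic $A(\WW_{2,3})$-submodule realizing the size-$3$ block) completes this step.

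Next I would form the generalized Verma-type $\WW_{2,3}$-module $M(U)=\bigoplus_{n\ge 0}M(U)(n)$ induced from $U$, with $M(U)(0)\cong U$ as an $A(\WW_{2,3})$-module. Because $\WW_{2,3}$ is $C_2$-cofinite, $M(U)$ has finite length, so it is a genuine logarithmic module, and by construction $L(0)$ acts on $M(U)(0)=U$ with the size-$3$ Jordan block inherited from $[\om]$; thus the nilpotent rank of $L(0)$ on $M(U)$ is at least $3$. The matching upper bound --- that no logarithmic $\WW_{2,3}$-module has nilpotent rank exceeding $3$ --- follows from the structure of $A(\WW_{2,3})$ together with the fact that $L(0)$ is semisimple on each irreducible $\WW_{2,3}$-module and every module has a finite composition series whose length-$3$ bound is again controlled by the largest Jordan block of $[\om]$ on $A(\WW_{2,3})$.

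The \emph{main obstacle} is the second step: identifying the non-semisimple block of $A(\WW_{2,3})$ precisely enough to certify that the nilpotency order of $[\om]$ there is exactly $3$ --- not $2$, which would merely reconfirm ordinary logarithmic behaviour, and not accidentally larger. Everything after that --- Zhu's correspondence, passage to the generalized Verma module, and invoking $C_2$-cofiniteness for finite length --- is routine once the structure of $A(\WW_{2,3})$ from Section~\ref{log-ext-min} is in hand.
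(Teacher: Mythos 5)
Your overall strategy (find a finite-dimensional $A(\WW_{2,3})$-module on which $[\omega]$ has a Jordan block of size $3$, then induce via Zhu's correspondence) is exactly the paper's route, but the step you defer as the ``main obstacle'' is precisely the mathematical content of the paper's proof, so as written the argument has a genuine gap. The paper does not hunt through indecomposable projectives: it shows that the center of $A(\WW_{2,3})$ coincides with the subalgebra generated by $[\omega]$ and is isomorphic to ${\C}[x]/\la f_{2,3}(x)\ra$. This follows from the explicit determination of $\mbox{Ker}(\Phi)$ for the map $\Phi\colon A(\mip)\to A(\WW_{2,3})$ (Proposition \ref{ker-c0}, which forces $f_{2,3}\mid f$ for any polynomial $f$ with $f([\omega])\in\mbox{Ker}(\Phi)$, together with $f_{2,3}([\omega])\in\mbox{Ker}(\Phi)$), and the punchline is the explicit factor $x^3$ in $f_{2,3}(x)$ -- appearing to the third power and no higher -- which yields the $3$-dimensional indecomposable module $U^{(3)}_0$ on which $[\omega]$ is a nilpotent Jordan block of size $3$; Zhu's theory then produces the $\N$-graded logarithmic module $R_0^{(3)}$ with this top component. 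Your proposal never identifies which weight $h$ carries the cube (it is $h=0$), nor why the multiplicity is exactly $3$, and your heuristic that ``the associated graded (Poisson) picture already forces non-semisimplicity'' is not a valid inference: $\mathcal{P}(V)$ only bounds $\dim A(V)$ from above and says nothing about Jordan block sizes. Without the kernel computation (which in turn rests on the relation $[F\circ E]=g([\omega])*p([\omega])*[H]$ with $g$ coprime to $f_{2,3}$, and on the constant-term identities), the existence of a rank-$3$ block is simply not established.

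Two smaller points. First, your closing claim that no logarithmic $\WW_{2,3}$-module has nilpotent rank exceeding $3$ is not needed for the corollary and does not follow from $A(V)$ alone: the ordinary Zhu algebra only controls the top graded piece, and controlling Jordan blocks in higher graded pieces would require the higher Zhu algebras $A_n(V)$. Second, the module realizing the size-$3$ block is not naturally an indecomposable projective of $A(\WW_{2,3})$; it is the quotient ${\C}[x]/\la x^3\ra$ of the center, made into an $A(\WW_{2,3})$-module (the paper's $U^{(3)}_0$), which is all that Zhu's correspondence requires.
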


It is important to observe that our approach is "internal" in a sense that only properties of the vertex algebra in question have been used to determine Zhu's algebra. We do not use any information
about the structure or existence of logarithmic representations (which is in general nontrivial), nor modular invariance \cite{AdM-triplet}, \cite{Miy}. Instead, the existence of logarithmic modules
is obtained from complete structure of the Zhu algebra. In this way we give additional evidence for the correspondence between the category of modules  for triplet vertex algebras and the category of modules for certain quantum groups which are Kazhdan-Lusztig duals of triplet vertex algebras (cf. \cite{FGST-triplet}, \cite{FGST-log-kl}).

\vskip 5mm

\noindent {\bf Acknowledgment:} We thank D. Svrtan and D. Zeilberger for useful discussion. We thank the referee for her/his valuable comments and suggestions.

\section{Main definitions}

The starting point for this paper is to recall the definition of Zhu's algebra for vertex operator
(super)algebras following \cite{KW}, \cite{Z}.

Let $(V=  V ^{\bar 0} \oplus V ^{\bar 1},Y, {\bf 1}, \omega)$ be a vertex operator superalgebra. We
shall always assume that
\bea
&&V^{\bar 0}=\coprod_{ n \in {\N} } V(n), \quad V ^{\bar 1} = \coprod_{ n \in \tfrac{1}{2} +{\N} } V(n) \nonumber \\
&&\mbox{where} \ \  V(n) = \{ a \in V \ \vert \ L(0) a = n v \}. \nonumber \eea
For $a \in V(n)$, we shall write $\wt (a) = n$, or ${\rm deg}(a)=n$.
As usual, vertex operator  associated to $a \in V$ is denoted by $Y(a,x)$, with the mode expansion
$$Y(a,x)=\sum_{n \in \mathbb{Z}} a_n x^{-n-1}.$$

We define two bilinear maps $* : V  \times V \rightarrow V$,
$\circ : V \times V \rightarrow V$ as follows: for homogeneous $a,
b \in V$, let
\bea
a* b &&= \left\{\begin{array}{cc}
 \  \mbox{Res}_x Y(a,x) \frac{(1+x) ^{\deg (a)}}{x}b  & \mbox{if} \ a,b  \in V^{\bar{0}} \\
  0 & \mbox{if} \ a \ \mbox{or} \ b  \in V^{\bar{1}} \
\end{array}
\right.  \\
a\circ b &&= \left\{\begin{array}{cc}
 \  \mbox{Res}_x Y(a,x) \frac{(1+x) ^{\deg (a)} }{x^2}b  & \mbox{if} \ a  \in V^{\bar{0}} \\
  \  \mbox{Res}_x Y(a,x) \frac{(1+x) ^{\deg (a) -\tfrac{1}{2} }}{x}b  & \mbox{if} \ a   \in V^ {\bar{1}} \
\end{array}
\right. \eea

Next, we extend $*$ and $\circ$ to $V \otimes V$ linearly, and
denote by $O(V)\subset V$ the linear span of elements of the form
$a \circ b$, and by $A(V)$ the quotient space $V / O(V)$. The
space $A(V)$ has a unitary associative algebra structure, with the multiplication induced by $*$. Algebra $A(V)$ is called  the
Zhu's algebra of $V$. The image of $v \in V$, under the natural
map $V \mapsto A(V)$ will be denoted by $[v]$.

For a homogeneous $a \in V$ we define
$$o(a) = a_{\wt(a)-1}.$$
In the case when $V ^{\bar 0} = V$, $V$ is a vertex operator algebra and we get the usual definition of Zhu's algebra for vertex operator algebras.

With $V$ as above, we let $$C_2(V)=\langle a_{-2} b : a,b \in V \rangle \ \ {\rm and} \  \ \mathcal{P}(V)=V/C_2(V).$$
The quotient space $\mathcal{P}(V)$ has an algebraic structure of a commutative Poisson
algebra \cite{Z}. Explicitly, if we denote by $\bar{a}$ the image of $a$ under the natural map $V \mapsto \mathcal{P}(V)$ the Poisson bracket is given by
$\{\overline{a}, \overline{b} \} = \overline{ a_0 b}$ and commutative product $\overline{a} \cdot  \overline{b} = \overline{a_{-1} b}$. If $V$ is a vertex superalgebra, clearly $\mathcal{P}(V)$ is $\mathbb{Z}_2$-graded.
Its even part will be denoted by $\mathcal{P}_0(V)$. From the given definitions it is not hard to construct an increasing filtration of $A(V)$ such that ${\rm gr}A(V)$ maps
onto $\mathcal{P}(V)$.

\section{A homomorphism of Zhu's algebras }

\label{hom-zhu}

 Assume that $V$ is a vertex operator superalgebra which admits the (horizontal) ${\Z}$--gradation:
 $$ V = \bigoplus_{\ell \in {\Z}} V_{\ell}, \quad V_{\ell_1} \cdot V_{\ell_2} \subset V_{\ell_1 + \ell_2}, $$
 where
 $$V_{k} \cdot V_{\ell}:=\mbox{span}_{\C} \{ u_n v : u \in V_{k}, v \in V_{\ell}, n \in \mathbb{Z} \}.$$
 In addition, assume there is $G  \in {\rm End}(V)$ such that:

 \bea
 &&\label{uv-1}  G \quad \mbox{is a derivation on V}, \\
 &&\label{uv-2} G (V_{\ell})  \subset V_{\ell +1},  \\
 && \label{uv-3} \omega \in V_0, \quad  [G, Y(\omega,z)] = 0, \\
 &&  \label{uv-4} G \vert V_{\ell} \quad \mbox{is injective for } \ \ell < 0,  \\
 &&\label{uv-5} G \vert V_{\ell} \quad \mbox{is surjective  for } \ \ell \ge 0.
 \eea

 Clearly, $V_0$ is a subalgebra of $V$.

 We shall consider Zhu's algebras
 $$ A(V_0) = V_0 / O(V_0), \quad A(V) = V / O(V). $$
 Let
 $$ O(V)_0 = O(V) \cap V_0 = \mbox{span}_{\C} \{ v \circ w \ \vert \ v \in V_{\ell}, \ w \in V_{-\ell}, \ \ell \in {\Z} \} . $$

 Consider the following homomorphism of Zhu's algebras
 \bea
 \Phi : && A(V_0) \rightarrow A(V) \nonumber \\
  && v + O(V_0) \mapsto v + O(V), \quad v \in V_0 . \nonumber \eea
 Let $$ A_0 (V) = \mbox{Im} (\Phi) = \frac{V_0}{ O(V)_0}. $$
 Clearly, $$ A_0 (V) \cong A(V_0) / \mbox{Ker} (\Phi).$$

 We are interested in $\mbox{Ker}(\Phi)$. First we notice that
 $$ \mbox{Ker}(\Phi) = \{ u + O(V_0) \in A(V_0) \ \vert \ u \in O(V)_0 \}.$$

 \begin{lemma} \label{lema-0}We have
 $$ O(V)_0 \subset  G(V_{-1}) + O(V_0). $$
 \end{lemma}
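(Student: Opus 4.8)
The plan is to take a typical spanning element $v \circ w$ of $O(V)_0$ with $v \in V_\ell$, $w \in V_{-\ell}$ and show that, modulo $O(V_0)$, it lies in $G(V_{-1})$. The only interesting case is $\ell \neq 0$, since for $\ell = 0$ we have $v \circ w \in O(V_0)$ by definition and there is nothing to prove. By symmetry of the two cases $\ell > 0$ and $\ell < 0$ (one can interchange the roles using skew-symmetry-type identities inside Zhu's algebra, or simply run the argument twice), I would focus on, say, $\ell < 0$, so that $v \in V_{\ell}$ with $\ell \le -1$ and $G|_{V_\ell}$ is injective by (\ref{uv-4}), while $w \in V_{-\ell}$ with $-\ell \ge 1$ so that $G|_{V_{-\ell}}$ is surjective by (\ref{uv-5}).

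The key idea is that $G$ is a derivation (\ref{uv-1}) commuting with the Virasoro action (\ref{uv-3}), hence $G$ behaves well with respect to the operations $*$ and $\circ$ that define Zhu's algebra. Concretely, since $[G, Y(\omega,z)] = 0$ we have $[G, L(0)] = 0$, so $G$ preserves the conformal weight grading; combined with $G$ being a derivation, $G(Y(a,x)b) = Y(Ga,x)b + Y(a,x)Gb$, and since $\deg(Ga) = \deg(a)$ this gives
$$G(a \circ b) = (Ga) \circ b + a \circ (Gb), \qquad G(a * b) = (Ga)* b + a * (Gb)$$
for homogeneous $a, b$. In particular $G(O(V)) \subset O(V)$ and $G(O(V_0)) \subset O(V_0)$. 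Now, given $v \circ w$ with $v \in V_\ell$, $\ell < 0$: using surjectivity of $G$ on $V_{-\ell}$ (as $-\ell \ge 1 \ge 0$) write $w = G(w')$ for some $w' \in V_{-\ell - 1}$. Then
$$v \circ w = v \circ G(w') = G(v \circ w') - (Gv)\circ w',$$
where $v \circ w' \in V_{-1}$ (degrees add: $\ell + (-\ell - 1) = -1$), so $G(v \circ w') \in G(V_{-1})$, and $(Gv) \circ w'$ has $Gv \in V_{\ell + 1}$, $w' \in V_{-\ell-1}$, again a product of elements in opposite horizontal degrees summing to $0$, i.e. $(Gv)\circ w' \in O(V)_0$ but now with the first factor in strictly higher horizontal degree $\ell + 1$. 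This is the engine of a downward induction on $|\ell|$: repeat until the first factor lands in $V_0$, at which point the remaining term is in $O(V_0)$, and all the intermediate terms have been absorbed into $G(V_{-1})$. (The base case $\ell = -1$: $v \in V_{-1}$, $w \in V_1$, write $w = Gw'$ with $w' \in V_0$, so $v \circ w = G(v\circ w') - (Gv)\circ w'$ with $v \circ w' \in V_{-1}$ and $(Gv)\circ w' \in O(V_0)$ since $Gv \in V_0$.)

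The main obstacle I anticipate is the case $\ell > 0$: there $G$ is not injective on $V_\ell$ and, more importantly, to run the same trick I would want to write $v = G(v')$ using surjectivity (which does hold, $\ell \ge 0$), giving $v \circ w = G(v' \circ w) - v' \circ (Gw)$ with $v' \circ w \in V_{-1}$ and $v' \in V_{\ell - 1}$, $Gw \in V_{-\ell+1}$ — so the induction now decreases $\ell$ toward $0$ from above, and again terminates in $O(V_0)$. So in fact both cases are handled by the same derivation identity, peeling off a $G$ from whichever factor sits in non-negative horizontal degree (one of $v, w$ always does, unless both are in $V_0$). The one point requiring a little care is making sure the "lifts" $v'$ or $w'$ can be chosen homogeneous of the correct conformal weight, which is fine because $G$ preserves conformal weight by (\ref{uv-3}) and the relevant $V_\ell$ are graded by $L(0)$ with finite-dimensional pieces; and checking that the identity $G(a \circ b) = (Ga)\circ b + a \circ (Gb)$ really does hold on the nose (not just modulo $O(V)$), which follows directly from the derivation property and weight-preservation. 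Summing the telescoping contributions then yields $v \circ w \in G(V_{-1}) + O(V_0)$, and since such elements span $O(V)_0$, the lemma follows.
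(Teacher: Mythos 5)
Your argument is correct and is essentially the paper's own proof: both rest on the identity $G(a\circ b)=(Ga)\circ b+a\circ(Gb)$ (valid since $G$ is a weight-preserving derivation) together with surjectivity of $G$ in nonnegative horizontal degree, and an induction that peels off one $G$ at a time until both factors land in $V_0$. The paper writes out only the case $\ell\ge 0$ and declares $\ell<0$ analogous, which is exactly the second branch you carry out explicitly, so there is no substantive difference.
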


 \begin{proof}

 We have to prove that for every $\ell \in {\Z}$:
\bea && v \in V_{\ell},  \quad w \in V_{-\ell} \quad \implies v \circ w \in G(V_{-1}) + O(V_0). \label{claim} \eea

We  consider   the case when $ \ell \in {\N}$. The case $ \ell < 0$ can be proved analogously.

We  prove the claim (\ref{claim}) by induction on $ \ell \in {\N}$. For $\ell =0$ the claim holds.

Assume that $ \ell =1$, $v \in V_1$, $w \in V_{-1}$.

Take  $v ' \in V_0$ such that $ v = G v'$. Since $ v ' \circ w \in V_{-1}$ and $ v ' \circ G w \in O(V_0)$, we have
$$ v \circ w = G ( v' \circ w) - v ' \circ G w \in  G(V_{-1})  +   O(V_0). $$

Assume now that the claim holds for $\ell >0$. Let
$$v \in V_{\ell+1}, \quad  w \in V_{-\ell-1}. $$

Take  $v ' \in V_{\ell}$ such that  $ G v' = v$. By using induction hypothesis and  the fact that  $ v ' \circ w \in V_{-1}$,  we get
$$ v \circ w = G ( v' \circ w) - v ' \circ G w \in G(V_{-1}) + O(V_0). $$
The proof follows.
\end{proof}

Lemma \ref{lema-0} implies the following result:

\begin{proposition} \label{prop-1}
Let  $ [G(V_{-1})] = \{ [G v], \ v \in V_{-1}  \}$. Then
$$ \mbox{Ker}(\Phi)     \subset [G(V_{-1})]. $$
\end{proposition}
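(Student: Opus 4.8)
The plan is to unwind the definition of $\mbox{Ker}(\Phi)$ and apply Lemma \ref{lema-0} directly. Recall that $\mbox{Ker}(\Phi) = \{ u + O(V_0) \in A(V_0) \mid u \in O(V)_0 \}$. So I would take an arbitrary class $\kappa \in \mbox{Ker}(\Phi)$ and choose a representative $u \in O(V)_0$ with $\kappa = u + O(V_0)$. By Lemma \ref{lema-0}, $O(V)_0 \subset G(V_{-1}) + O(V_0)$, so there exist $v \in V_{-1}$ and $o \in O(V_0)$ with $u = G v + o$. Then modulo $O(V_0)$ we have $u \equiv G v$, hence $\kappa = u + O(V_0) = Gv + O(V_0) = [Gv] \in [G(V_{-1})]$.

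The only subtlety worth flagging is a consistency check on what the bracket notation means: $[G(V_{-1})]$ consists of classes $[Gv]$ with $v \in V_{-1}$, and one should make sure these classes are being taken in $A(V_0)$ (equivalently, that $Gv \in V_0$ whenever $v \in V_{-1}$, which follows from \eqref{uv-2}), so that the containment $\mbox{Ker}(\Phi) \subset [G(V_{-1})]$ is an inclusion of subsets of $A(V_0)$. This is immediate from the hypotheses, so there is essentially no obstacle here; the proposition is a formal corollary of the lemma. Indeed the real content has already been extracted in Lemma \ref{lema-0} via the induction on $\ell$ using the derivation property of $G$, the relation $v' \circ Gw \in O(V_0)$, and injectivity/surjectivity of $G$ on the appropriate graded pieces; the proposition merely repackages that inclusion after quotienting by $O(V_0)$.

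If one wanted the statement in slightly sharper form, I would also note in passing that the reverse containment can fail in general (so this is genuinely just an upper bound on the kernel), which is presumably why the later sections of the paper must do additional work — via constant term identities — to pin down $\mbox{Ker}(\Phi)$ exactly as a principal ideal. But for the proposition as stated, the two-line argument above suffices.
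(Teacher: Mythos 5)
Your argument is correct and is exactly the paper's: the paper derives Proposition \ref{prop-1} as an immediate consequence of Lemma \ref{lema-0}, and your unwinding of $\mbox{Ker}(\Phi)$ together with the observation $G(V_{-1})\subset V_0$ from (\ref{uv-2}) is the same two-line reduction. Nothing further is needed.
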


\section{ Structure of Zhu's algebra $A(\triplet)$ }

In this section we shall describe the structure of Zhu's algebra of the triplet vertex algebra $\triplet$. We use several structural results on triplet and singlet vertex algebra obtained in \cite{A-2003}, \cite{AdM-2007} and \cite{AdM-triplet}, which we recall briefly.

 \vskip 2mm

Let $p \in {\Z}$ such that $p \ge 2$. As usual, $V_L$ will denote the lattice vertex algebra associated to the positive definite even lattice
$$L = {\Z} \a, \quad \la \a, \a \ra = 2 p .$$
Let $Y$ be the associated vertex operator.
For details of the construction see, for instance, \cite{AdM-triplet}.

The triplet vertex algebra $\triplet$, of {\em type}  $(2,2p-1,2p-1,2p-1)$,  is a vertex subalgebra of $V_L$ generated by
the conformal vector
$$ \omega = \frac{1}{4 p} \a(-1)^2  + \frac{p-1}{2p} \a (-2), $$
and the {\em primary} vectors
$$\ F = e ^{-\a}, \ H = Q F, \ E = Q ^2 e ^{-\a},$$
where $ Q= e ^{\a}_0= \mbox{Res}_z Y(e ^{\a},z)$ is a so-called {\em screening} operator.
The operator $Q$ acts horizontally (preserving conformal weight) so that
$${\rm deg}(X)=2p-1, \  \ X \in \{E,F,H \}.$$
There is another useful description of $\triplet$. As a module for the Virasoro algebra, $V_L$ is not completely reducible. But it has a semisimple submodule which is isomorphic to the triplet vertex algebra.
More precisely,
\bea \triplet &=& {\rm soc}_{Vir}(V_L) \nonumber \\
&=& \bigoplus_{n= 0} ^{\infty}  \bigoplus_{j= 0} ^{2n} U(Vir). Q ^j e^{-n \a}. \label{dec-triplet} \eea

The vertex subalgebra of $\triplet$ generated by $\omega$ and $H$ is called singlet vertex algebra and will be denoted by $\mip$. Clearly, $\mip$ is a subalgebra of the Heisenberg vertex algebra $M(1)$.

For $ i \in {\Z}$ we set  $$h_{i,1} = \frac{(p-i) ^2 - (p-1) ^2}{4p}.$$
The triplet $\triplet$ is known to be $C_2$-cofinite but irrational \cite{AdM-triplet}. Moreover,
$\triplet$ has precisely $2 p$ inequivalent irreducible modules which are usually denoted by
$$ \Lambda(1), \dots, \Lambda(p), \Pi(1), \dots, \Pi(p). $$
For $1 \le i \le p$, the
top component of  $\Lambda(i)$ is $1$-dimensional and has conformal weight $h_{i,1}$,  and the top component of $\Pi(i)$ is $2$--dimensional with conformal  weight
$h_{3p-i,1}$.

\begin{theorem}  \cite{A-2003} \label{zhu-alg}
Zhu's associative algebra $A(\mip)$ is isomorphic to the
commutative algebra \\  ${\C}[ x, y] / \la P(x,y) \ra$,
where  $\la P(x,y) \ra $ is the principal ideal generated by
\bea \label{ass-poly} P(x,y) = y ^{2} - \frac{( 4 p) ^{2 p -1} }{
(2 p -1)! ^{2}} \ ( x + \frac{(p-1) ^{2}}{4 p}) \prod_{i= 0}
^{p-2} \left( x + \frac{i}{4 p} ( 2 p - 2 -i) \right) ^{2} . \eea
(Here $x$ and $y$ correspond to $[\omega]$ and $[H]$).
\end{theorem}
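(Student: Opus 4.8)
The plan is to realize $A(\mip)$ concretely inside Zhu's algebra $A(M(1))$ of the Heisenberg vertex algebra and then identify the single polynomial relation. First I would recall that $A(M(1)) \cong \C[x]$ with $x = [\omega]$, since $M(1)$ is generated by the single Virasoro-primary-free field $\a(-1)\vak$ and a standard computation (already in Zhu's original paper, or Frenkel--Zhu) shows all higher modes are absorbed into $O(M(1))$; more precisely $[\a(-1)\vak]$ and $[\omega]$ generate, and there is a polynomial relation expressing one in terms of the other up to $O(M(1))$, leaving $\C[x]$. Since $\mip \subset M(1)$ is generated by $\omega$ and $H = QF = Q e^{-\a}$ (restricted to $M(1)$, i.e. the image of $H$ under projection to the Heisenberg part), the subalgebra $A(\mip)$ is generated by the classes $x = [\omega]$ and $y = [H]$. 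So $A(\mip)$ is a commutative algebra generated by two elements, hence a quotient of $\C[x,y]$; the content is to pin down the ideal.

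The key computational step is to compute $y^2 = [H] * [H]$ inside $A(M(1)) \cong \C[x]$ and show it equals the explicit degree-$(2p-1)$ polynomial in $x$ displayed in \eqref{ass-poly}. Here one uses that $H$ has conformal weight $2p-1$ and that, because $H$ is a Virasoro primary vector of that weight, the product $H * H = \mathrm{Res}_x Y(H,x)\frac{(1+x)^{2p-1}}{x}H$ can be rewritten, modulo $O(M(1))$, using the commutation relations with $L(-1)$ and the explicit form of $H$ as a differential polynomial in $\a$. The cleanest route is to recognize that $o(H)$ acts on the irreducible $M(1)$-modules $M(1,\lambda)$ (Fock spaces) as a scalar depending on $\lambda$, and that $o(\omega)$ acts as the scalar $h_\lambda = \lambda^2/2 - \cdots$; matching the zeros of $y^2$ against the list of weights $h_{i,1}$ where the singlet module structure degenerates forces $y^2$ to be, up to the overall constant, $\bigl(x + \tfrac{(p-1)^2}{4p}\bigr)\prod_{i=0}^{p-2}\bigl(x + \tfrac{i}{4p}(2p-2-i)\bigr)^2$. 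The overall constant $\frac{(4p)^{2p-1}}{(2p-1)!^2}$ is then fixed by computing the leading coefficient: the top term of $H$ as a polynomial in $\a(-1)$ is (a known multiple of) $\a(-1)^{2p-1}\vak$ arising from applying $Q = e^\a_0$ to $e^{-\a}$, and squaring the leading symbol gives the stated normalization after accounting for the $(2p-1)!$ factors from the residue and the lattice pairing $\la\a,\a\ra = 2p$.

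Finally I would argue the ideal is exactly principal, generated by $P(x,y) = y^2 - (\text{polynomial in }x)$, and nothing more. For this one needs the reverse inequality: a lower bound $\dim A(\mip)\ge 2(2p-1) = $ (not finite — $\mip$ is irrational), so in fact $A(\mip)$ is infinite-dimensional and the point is rather that $\C[x,y]/\la P\ra$, a free rank-$2$ module over $\C[x]$, surjects onto $A(\mip)$ and that this surjection is injective. Injectivity follows from exhibiting enough modules on which the two candidate generators of each $\C[x]$-summand act by independent scalars: using the $2p$ irreducible $\triplet$-modules (hence $\mip$-modules) $\Lambda(i), \Pi(i)$ together with the generic Fock modules $M(1,\lambda)$, the pairs $(o(\omega), o(H))$ separate points of the variety $P(x,y)=0$, so no nonzero element of $\C[x,y]/\la P\ra$ can lie in the annihilator of all modules, forcing it to act nontrivially and hence be nonzero in $A(\mip)$. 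The main obstacle I anticipate is the explicit evaluation of $y^2$ in $\C[x]$ — both getting the polynomial in factored form (which requires knowing exactly which singular vectors of the Virasoro/Heisenberg structure produce the squared factors) and nailing the constant $\frac{(4p)^{2p-1}}{(2p-1)!^2}$; everything else is formal, but this computation is where the real work, and the risk of sign/normalization errors, lies.
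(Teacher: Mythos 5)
The paper does not prove Theorem \ref{zhu-alg} at all: it is quoted from \cite{A-2003}, where the argument is carried out internally in $\mip$. Your overall architecture (two generators, one hyperelliptic-type relation; upper bound from an explicit computation of $[H]*[H]$; lower bound by evaluating $o(\omega),o(H)$ on top levels of the Fock modules $M(1,\lambda)$) is the right one and matches that source in spirit, but your key step is carried out modulo the wrong ideal. You propose to compute $H*H$ ``inside $A(M(1))$'', i.e.\ modulo $O(M(1))$, whereas the theorem concerns $A(\mip)=\mip/O(\mip)$. The inclusion $\mip\subset M(1)$ only gives $O(\mip)\subseteq O(M(1))\cap\mip$ and hence a natural map $A(\mip)\to A(M(1))$ whose injectivity is not known a priori; controlling kernels of exactly such maps is the content of Section \ref{hom-zhu} and Theorem \ref{jezgra} for the analogous inclusion $\mip\subset\triplet$. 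A relation verified modulo $O(M(1))$ --- or, equivalently, verified as the scalar identity $o(H)^2=f(o(\omega))$ on the modules $M(1,\lambda)$, which is what your ``matching the zeros'' step really produces --- only constrains the image of $A(\mip)$, not $A(\mip)$ itself; what is needed is the membership $H*H-f(\omega)\in O(\mip)$, which gives the inclusion $\la P\ra\subseteq\ker\bigl(\C[x,y]\to A(\mip)\bigr)$. You cannot bypass this by invoking injectivity of $A(\mip)\to A(M(1))$: once the module argument gives $\ker\subseteq\la P\ra$, that injectivity is equivalent to the missing inclusion, so the reasoning would be circular. The actual proof establishes the relation inside $\mip$, using the decomposition $\mip=\bigoplus_{n\ge 0}U(Vir)\,Q^ne^{-n\a}$ and reduction of the Virasoro part modulo $O(\mip)$ (the same mechanism as Lemma \ref{lema-2} of this paper); this internal computation is the real content and is absent from your proposal.

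Two secondary inaccuracies. First, $A(M(1))\cong\C[x]$ holds with $x=[\a(-1)\vak]$, not with $x=[\omega]$: because of the shifted conformal vector, $[\omega]$ is a quadratic polynomial in $[\a(-1)\vak]$ and generates a proper subalgebra, and there is no polynomial relation expressing $[\a(-1)\vak]$ through $[\omega]$. Second, generation of $A(\mip)$ by $[\omega]$ and $[H]$ does not follow formally from the statement that $\omega$ and $H$ generate $\mip$ as a vertex algebra; one needs a spanning argument (strong generation, or the reduction of the classes $[Q^ne^{-n\a}]$ to polynomials in $[\omega]$ and $[H]$, again as in Lemma \ref{lema-2}). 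Your separation-of-points argument for the lower bound is fine in outline, but it should be supplemented by the remark that $P$ is irreducible (the polynomial in $x$ on the right-hand side is not a perfect square, thanks to the simple factor $x+\tfrac{(p-1)^2}{4p}$), so that $\la P\ra$ is prime and is the full vanishing ideal of the Zariski closure of the points $\bigl(o(\omega),o(H)\bigr)$ coming from $M(1,\lambda)$; the finitely many irreducible $\triplet$-modules add nothing here, and the two-dimensional top levels of the $\Pi(i)$ do not even give characters of $A(\mip)$ in the naive sense.
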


\begin{remark} \label{after-zhu-alg}
Description in Theorem \ref{zhu-alg} shows that the subalgebra $P$
of $A(\mip)$ generated by $[\omega]$ is isomorphic to $\mathbb{C}[x]$ and
that $1$ and $[H]$ are algebraically independent over $\mathbb{C}[x]$, meaning that if
$$A([\omega])+B([\omega])*[H]=0 \ \ {\rm in} \ \ A(\mip),$$
for some polynomials $A$ and $B$
then $A(x)=B(x)=0$.
\end{remark}

 Recall that as a module over Virasoro algebra $\triplet$ is generated by singular vectors
 $$ \{ Q ^j  e ^{-n \a}, \ n \in {\N}, 0 \le j \le 2n \}, $$
 and $\mip$ is generated by singular vectors
  $$ \{ Q ^n  e ^{-n \a}, \ n \in {\N} \}. $$

For every $ \ell \in {\N}$, we define
$$ \triplet_{-\ell} = \mip . e ^{-\ell \a} = \bigoplus_{n = 0} ^{\infty} U(Vir) . Q ^n e ^{-(n+\ell) \a },  $$
$$ \triplet_{\ell} = \mip . Q ^{ 2 \ell}  e ^{-\ell \a} = \bigoplus_{n = 0} ^{\infty} U(Vir) . Q ^{n + 2 \ell} e ^{-(n+\ell) \a}  . $$
Then we have yet another description of $\triplet$.
\begin{proposition}
For every $\ell \in {\Z}$, $\triplet_{\ell}$ is an irreducible $\mip$--module and
$$ \triplet = \bigoplus_{\ell \in {\Z}} \triplet _{\ell}.$$
Moreover, for $v \in \triplet _{\ell_1}$, $w \in \triplet_{\ell_2}$, we have
$$Y(v,z ) w \in \triplet_{\ell_1 + \ell_2} ((z)).$$
Operator $G= Q$ satisfies conditions (\ref{uv-1})-(\ref{uv-5}).
\end{proposition}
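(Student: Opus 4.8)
The plan is to verify the five conditions (\ref{uv-1})--(\ref{uv-5}) one at a time for $G = Q = e^{\a}_0$, using the decomposition $\triplet = \bigoplus_{\ell} \triplet_\ell$ already recorded above, together with the representation theory of the Virasoro algebra on the modules $\triplet_\ell$. Conditions (\ref{uv-1}) and (\ref{uv-3}) are essentially formal: $Q = \mathrm{Res}_z Y(e^\a, z) = (e^\a)_0$ is a zero-mode, hence by the commutator formula $[a_0, Y(b,z)] = Y(a_0 b, z)$ it acts as a derivation of all products $u_n v$, giving (\ref{uv-1}); and since $e^\a$ has conformal weight $1 = \deg$-degree... wait, more precisely $Q$ is a screening operator that commutes with $L(n)$ for all $n$ (this is the standard fact that $e^\a_0$ commutes with the Virasoro action because $\a(-1)^2$-type conformal vector is chosen so that $L(n) e^\a$ is a total derivative for $n \geq 0$; it is already used implicitly in \cite{A-2003}, \cite{AdM-triplet} when asserting that $H = QF$ is primary), which is exactly $[G, Y(\omega,z)] = 0$, and $\omega \in \triplet_0 = \mip$ gives the rest of (\ref{uv-3}). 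For the grading-shift property (\ref{uv-2}), I would read off directly from the definitions $\triplet_{-\ell} = \bigoplus_n U(Vir) Q^n e^{-(n+\ell)\a}$ and $\triplet_\ell = \bigoplus_n U(Vir) Q^{n+2\ell} e^{-(n+\ell)\a}$ that applying $Q$ raises the power of $Q$ by one, hence sends $\triplet_\ell$ into $\triplet_{\ell+1}$ for every $\ell \in \Z$ (one checks the two families of indices are compatible, i.e. $Q \cdot Q^n e^{-(n+\ell)\a}$ with $\ell \geq 1$ lands in the generating set for $\triplet_{\ell+1}$, since $Q$ commutes with $U(Vir)$ by (\ref{uv-3})).

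The substantive conditions are (\ref{uv-4}) and (\ref{uv-5}): injectivity of $Q$ on $\triplet_\ell$ for $\ell < 0$ and surjectivity for $\ell \geq 0$. Here I would reduce to a statement about $Q$ acting on the span of the Virasoro singular vectors $Q^j e^{-n\a}$. The key point is that $\triplet = \mathrm{soc}_{Vir}(V_L) = \bigoplus_{n,j} U(Vir) Q^j e^{-n\a}$ (decomposition (\ref{dec-triplet})), and by the general theory (this is precisely the structure of $V_L$ as a module over the $(1,p)$-Virasoro, as in the work of Feigin--Fuchs / the references cited) the map $Q : U(Vir) Q^j e^{-n\a} \to U(Vir) Q^{j+1} e^{-n\a}$ is an isomorphism of Virasoro modules as long as $Q^{j+1} e^{-n\a} \neq 0$, and $Q^j e^{-n\a} = 0$ precisely when $j > 2n$ (these are the "extremal" singular vectors). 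So the linear map $Q$ restricted to $\bigoplus_n \C Q^j e^{-n\a}$ (fixed "level" data) has kernel spanned by the $Q^{2n} e^{-n\a}$ with the appropriate constraint, and is surjective onto the span of the $Q^{j} e^{-n\a}$ with $j \geq 1$. Matching this against the definitions of $\triplet_\ell$: for $\ell < 0$, $\triplet_\ell = \triplet_{-|\ell|}$ is built from $Q^n e^{-(n+|\ell|)\a}$, and since $n < 2(n+|\ell|)$ always, none of these generating singular vectors is killed by $Q$, so $Q$ is injective there; for $\ell \geq 0$, $\triplet_\ell$ is built from $Q^{n+2\ell} e^{-(n+\ell)\a}$ with $n+2\ell \geq 2\ell \geq 1$ when $\ell \geq 1$ (and $\ell = 0$ needs separate but easy treatment since $\mip$ is in the image of $Q$ from $\triplet_{-1}$ via $H = QF$ and more generally $Q^n e^{-n\a} = Q \cdot Q^{n-1} e^{-n\a}$), so every generating singular vector of $\triplet_\ell$ is $Q$ of something in $\triplet_{\ell - 1}$, and $Q$-equivariance upgrades this to surjectivity of $Q : \triplet_{\ell-1} \to \triplet_\ell$.

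The main obstacle I anticipate is making the kernel/cokernel analysis of $Q$ on the Virasoro generators fully rigorous, i.e. pinning down \emph{exactly} when $Q^{j+1} e^{-n\a} = 0$ and verifying that $Q$ is injective on each $U(Vir) Q^j e^{-n\a}$ when the image is nonzero. This rests on the detailed Feigin--Fuchs structure of $V_L$ over the singlet/Virasoro algebra and the fact that the distinct summands $U(Vir) Q^j e^{-n\a}$ in (\ref{dec-triplet}) are genuinely independent; I would cite \cite{A-2003} and \cite{AdM-triplet} for the precise statement that $Q^j e^{-n\a}$ is a nonzero Virasoro singular vector iff $0 \le j \le 2n$, that these generate a direct sum, and that $Q$ restricts to an injection on each nonzero $U(Vir)$-summand. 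Granting those facts, the rest is the bookkeeping of indices indicated above, and conditions (\ref{uv-1})--(\ref{uv-5}) all follow; in particular the "horizontal $\Z$-grading" hypotheses of Section \ref{hom-zhu} apply to $V = \triplet$ with $V_0 = \mip$, so Proposition \ref{prop-1} gives $\mathrm{Ker}(\Phi) \subset [Q(\triplet_{-1})]$, which is what the rest of the section will exploit.
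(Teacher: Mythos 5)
Your verification of $G=Q$ satisfying (\ref{uv-1})--(\ref{uv-5}) is correct in every case the proposition actually requires, and it is considerably more explicit than the paper's own proof, which consists of a citation: irreducibility of each $\triplet_{\ell}$ as an $\mip$--module is quoted from \cite{AdM-2007}, the decomposition (and implicitly the charge grading of the vertex operators) from \cite{AdM-triplet}, and the check of (\ref{uv-1})--(\ref{uv-5}) is left to the reader. Your route -- the zero-mode derivation property for (\ref{uv-1}), the screening property $[Q,Y(\omega,z)]=0$ for (\ref{uv-3}), index bookkeeping on the generating singular vectors for (\ref{uv-2}), and the Feigin--Fuchs facts ($Q^{j}e^{-n\a}\neq 0$ iff $0\le j\le 2n$, together with $Q$ commuting with the Virasoro action and hence being either zero or injective on each irreducible summand of (\ref{dec-triplet})) for (\ref{uv-4})--(\ref{uv-5}) -- is exactly the justification the paper has in mind, resting on the same references \cite{A-2003}, \cite{AdM-2007}, \cite{AdM-triplet}.

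One concrete slip: your parenthetical claim that ``$\mip$ is in the image of $Q$ from $\triplet_{-1}$'' is false. One has $Q(\triplet_{-1})=\bigoplus_{n\ge 1}U(Vir).Q^{n}e^{-n\a}$, which misses the vacuum summand $U(Vir).{\bf 1}$ of $\mip$ (the identity $Q^{n}e^{-n\a}=Q\cdot Q^{n-1}e^{-n\a}$ only covers $n\ge 1$); indeed the properness of this image is exactly what makes the lemma following the proposition, and the computation of $\mbox{Ker}(\Phi)$ in Theorem \ref{jezgra}, non-trivial. Fortunately condition (\ref{uv-5}) only demands surjectivity of $Q\colon \triplet_{\ell}\to\triplet_{\ell+1}$ for $\ell\ge 0$, i.e. onto $\triplet_{m}$ with $m\ge 1$, and those cases your index shift $n'=n+1$ together with $Q$-equivariance handles correctly; so the error sits in a case you did not need, and your verification of the hypotheses of Section \ref{hom-zhu}, hence of Proposition \ref{prop-1} for $V=\triplet$, $V_{0}=\mip$, stands.
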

\begin{proof}
Irreducibility has been established in \cite{AdM-2007}, and the decomposition follows from description given in \cite{AdM-triplet}.
\end{proof}

\begin{lemma}
 We have
 $$O(\triplet)_0 \subset \bigoplus_{n=1} ^{\infty} U(Vir). Q^n e ^{-n\a}  + O(\mip). $$
 \end{lemma}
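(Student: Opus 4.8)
The plan is to apply Proposition \ref{prop-1} together with the structural decompositions of the preceding section. By Proposition \ref{prop-1} (with $V = \triplet$, $V_0 = \mip$, $G = Q$), we know that
$$ \mbox{Ker}(\Phi) \subset [Q(\triplet_{-1})], $$
and, unwinding the definition of $\mbox{Ker}(\Phi)$, this says precisely that $O(\triplet)_0 \subset Q(\triplet_{-1}) + O(\mip)$. So the statement to be proved is essentially a rephrasing of Proposition \ref{prop-1}, except that $Q(\triplet_{-1})$ is being replaced by the more explicit space $\bigoplus_{n=1}^{\infty} U(Vir).Q^n e^{-n\a}$. Thus the real content is to identify $Q(\triplet_{-1})$ inside this sum of Virasoro submodules.

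First I would recall that $\triplet_{-1} = \mip.e^{-\a} = \bigoplus_{n=0}^{\infty} U(Vir).Q^n e^{-(n+1)\a}$. Applying $Q = G$, which commutes with the Virasoro action (condition (\ref{uv-3}): $[G,Y(\omega,z)] = 0$), we get
$$ Q(\triplet_{-1}) = \bigoplus_{n=0}^{\infty} U(Vir).Q^{n+1} e^{-(n+1)\a} = \bigoplus_{m=1}^{\infty} U(Vir).Q^{m} e^{-m\a}, $$
after reindexing $m = n+1$. Since $Q$ commutes with $L(0)$ and is applied inside each $U(Vir)$-module, one has $Q\,(U(Vir).Q^n e^{-(n+1)\a}) \subset U(Vir).Q^{n+1}e^{-(n+1)\a}$; I would want to note that this is actually an equality of $U(Vir)$-modules (or at least that the image lands inside the asserted space, which is all we need for the inclusion). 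Combining with Proposition \ref{prop-1} then gives exactly
$$ O(\triplet)_0 \subset \bigoplus_{n=1}^{\infty} U(Vir).Q^n e^{-n\a} + O(\mip). $$

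The main obstacle, such as it is, lies in being careful about the bookkeeping: one must check that $Q$ really does map $U(Vir).Q^n e^{-(n+1)\a}$ into $U(Vir).Q^{n+1}e^{-(n+1)\a}$ rather than producing terms in other graded pieces, which follows from $Q$ being a horizontal operator (it preserves conformal weight and shifts the $\ell$-grading by $+1$, by (\ref{uv-2})), together with the explicit singular-vector description of $\triplet_{-1}$ from the previous proposition. There is no genuine analytic difficulty here; the lemma is a direct corollary of Proposition \ref{prop-1} once $Q(\triplet_{-1})$ has been computed via the decomposition $\triplet = \bigoplus_{\ell \in \Z}\triplet_\ell$. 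I would therefore present the proof in two short steps: (i) invoke Proposition \ref{prop-1}; (ii) substitute the description of $\triplet_{-1}$ and use $[G, Y(\omega,z)] = 0$ to push $Q$ through the $U(Vir)$-action and reindex.
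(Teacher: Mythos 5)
Your proof is correct and takes essentially the same route as the paper: the paper invokes Lemma \ref{lema-0} directly (whose statement $O(V)_0 \subset G(V_{-1}) + O(V_0)$ is exactly what you recover by unwinding Proposition \ref{prop-1}) and likewise identifies $Q(\triplet_{-1}) = \bigoplus_{n \geq 1} U(Vir).Q^n e^{-n\a}$ from the singular-vector decomposition of $\triplet_{-1}$ together with the fact that $Q$ commutes with the Virasoro action. The only cosmetic difference is that you pass through Proposition \ref{prop-1} rather than citing Lemma \ref{lema-0} itself, which amounts to the same argument.
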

\begin{proof}
First we notice that
$$ Q (\triplet_{-1}) =   \bigoplus_{n=1} ^{\infty} U(Vir). Q^n e ^{-n\a}. $$
Now assertion follows from Lemma \ref{lema-0}.
\end{proof}

\begin{lemma} \label{lema-2}
Assume  $n \ge 1$. Then in Zhu's algebra $A(\mip)$ we have
$$[ Q ^n e ^{-n \a}]   = A([\omega]) * [H]  + B([\omega]) * f_p ([\omega]), $$
where
$$f_p(x) = \prod_{i=1} ^{3p-1} (x- h_{i,1}),$$
and   $A, B \in {\C}[x]$.
\end{lemma}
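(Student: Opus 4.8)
The plan is to work directly inside the vertex algebra $\mip\subset M(1)$ and track the element $Q^n e^{-n\a}$ modulo $O(\mip)$. Since $\triplet_0 = \mip$ and $Q^n e^{-n\a}$ is a Virasoro singular vector of conformal weight $\wt(e^{-n\a}) + \text{(shift from }Q^n\text{)}$, the whole cyclic module $U(Vir).Q^n e^{-n\a}$ collapses, modulo $O(\mip)$, to something controlled by $[\omega]$ and $[H]$: the standard Zhu-algebra reduction lets one replace $L(-k)$-descendants ($k\ge 2$) by polynomials in $o(\omega)$ acting on the top, so $[L(-k_1)\cdots L(-k_r)Q^n e^{-n\a}]$ lies in $\C[[\omega]]*[Q^n e^{-n\a}]$. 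Hence it suffices to analyze the single class $[Q^n e^{-n\a}]$ itself, and by Remark~\ref{after-zhu-alg} every element of $A(\mip)$ can be written uniquely as $A([\omega]) + B([\omega])*[H]$; I want to show that when this is done for $[Q^n e^{-n\a}]$, the ``$A$-part'' is divisible by $f_p([\omega])$, i.e. $A(x) = \widetilde B(x) f_p(x)$ for some polynomial $\widetilde B$.

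The key device is the action on irreducible $\mip$-modules. Recall $\triplet$ has $2p$ irreducibles $\Lambda(1),\dots,\Lambda(p),\Pi(1),\dots,\Pi(p)$ with top weights $h_{i,1}$ and $h_{3p-i,1}$, and these give the simple $A(\mip)$-modules via the top components. On the $1$-dimensional tops of the $\Lambda(i)$ and on the $2$-dimensional tops of the $\Pi(i)$, the operator $o(Q^n e^{-n\a}) = (Q^n e^{-n\a})_{\wt -1}$ acts as a lowering-type operator in the horizontal $\Z$-grading: since $Q^n e^{-n\a}\in\triplet_n$ with $n\ge 1$, it shifts the $\triplet_\ell$-grading, and therefore $o(Q^n e^{-n\a})$ kills every finite-dimensional top of an irreducible $\triplet$-module. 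But the top of each $\Lambda(i)$, $\Pi(i)$ is exactly an eigenspace of $o(\omega)=L(0)$ with eigenvalue $h_{i,1}$ resp. $h_{3p-i,1}$, and the values $\{h_{i,1} : 1\le i\le 3p-1\}$ (with multiplicities matching the factors of $f_p$) are precisely the roots of $f_p$. So evaluating the identity $[Q^n e^{-n\a}] = A([\omega]) + B([\omega])*[H]$ at each of these one-parameter / two-parameter modules forces $A(h_{i,1}) + B(h_{i,1})\cdot(\text{value of }[H]) = 0$; a short computation (using that on $\Lambda(i)$ the class $[H]$ acts as $0$ on the singlet top, and on $\Pi(i)$ it acts with the known eigenvalue structure) shows $A$ must vanish to the prescribed order at every $h_{i,1}$, i.e. $f_p(x)\mid A(x)$. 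Rewriting $A([\omega]) = \widetilde B([\omega])*f_p([\omega])$ and renaming gives the claimed form with the stated $A$ and $B$.

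The main obstacle is the bookkeeping at the $2$-dimensional tops $\Pi(i)$: one must be careful that the possible Jordan/eigenstructure of $o(\omega)$ and $o(H)$ there produces the correct \emph{multiplicities} in $f_p$ (some roots $h_{i,1}$ occur once, coming from the $\Lambda$'s, and the product $\prod_{i=1}^{3p-1}(x-h_{i,1})$ has to absorb the coincidences $h_{i,1}=h_{3p-i',1}$ correctly), so that ``$A$ vanishes at all these points'' genuinely upgrades to ``$f_p\mid A$'' rather than just to divisibility by the radical of $f_p$. A cleaner alternative, which I would pursue in parallel, is to avoid module-evaluation altogether and instead compute $[Q^n e^{-n\a}]$ recursively in $n$: one has $Q^{n}e^{-n\a} = Q\cdot(Q^{n-1}e^{-n\a})$ and $e^{-n\a} = (e^{-\a})_{?}\,e^{-(n-1)\a}$ up to lower terms, so Lemma~\ref{lema-0} (applied with $G=Q$) together with the explicit singlet relation $P(x,y)=0$ from Theorem~\ref{zhu-alg} lets one express $[Q^n e^{-n\a}]$ in terms of $[H]$, $[\omega]$ and $P([\omega],[H])$; since $P(x,y)$ itself has $y^2$-coefficient factoring through $\bigl(x+\tfrac{(p-1)^2}{4p}\bigr)\prod_{i=0}^{p-2}(x+\tfrac{i}{4p}(2p-2-i))^2$, which is (up to scalar and relabeling) $f_p(x)$, the divisibility drops out algebraically. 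Either route reduces the lemma to identifying the polynomial $\prod_{i=1}^{3p-1}(x-h_{i,1})$ with that explicit product appearing in $P$, which is a direct check from the definition $h_{i,1}=\frac{(p-i)^2-(p-1)^2}{4p}$.
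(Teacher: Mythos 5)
Both routes you sketch contain genuine errors. In Route 1 the key premise is false: $Q^n e^{-n\a}$ does \emph{not} lie in $\triplet_n$. The screening $Q$ raises the horizontal charge by one and $e^{-n\a}$ has charge $-n$, so $Q^n e^{-n\a}$ has charge zero, i.e.\ it lies in $\triplet_0=\mip$ --- which is exactly why it defines a class in $A(\mip)$ in the first place. Consequently the claim that $o(Q^n e^{-n\a})$ kills every irreducible top "because it shifts the grading" collapses; for $n=1$ it is plainly wrong, since $Qe^{-\a}=H$ and $[H]^2$ is a nonzero polynomial in $[\omega]$ by Theorem \ref{zhu-alg}, so $o(H)$ does not annihilate the generic tops. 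Even if an evaluation argument were repaired, vanishing of the polynomial part $A(x)$ at the finitely many points $h_{i,1}$ would only give divisibility by the radical of $f_p$; since $h_{i,1}=h_{2p-i,1}$, the polynomial $f_p$ has double roots, and the multiplicity problem you yourself flag is never resolved. In Route 2 the crucial algebraic identification is also wrong: the polynomial multiplying the constant in $P(x,y)$ is, up to scalar, $\prod_{i=1}^{2p-1}(x-h_{i,1})$ of degree $2p-1$, not $f_p(x)=\prod_{i=1}^{3p-1}(x-h_{i,1})$ of degree $3p-1$. The missing factor $p(x)=\prod_{i=2p}^{3p-1}(x-h_{i,1})$ is precisely the nontrivial content of the lemma and cannot be extracted from the singlet relation $[H]^2=g([\omega])$ alone. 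Moreover, Lemma \ref{lema-0} is a statement about $O(\triplet)_0$ and the map into $A(\triplet)$; it produces no identities inside $A(\mip)$, and the auxiliary vectors $Q^{n-1}e^{-n\a}$ and $e^{-\a}$ do not belong to $\mip$, so they cannot be manipulated within $A(\mip)$ as you propose.

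For comparison, the paper's proof stays entirely inside $\mip$ and is an induction on $n$ in steps of two: by results of the earlier papers, the modes of $H$ satisfy $H_j Q^k e^{-k\a}\in U(Vir)Q^{k+1}e^{-(k+1)\a}\oplus U(Vir)Q^{k-1}e^{-(k-1)\a}$, and for a suitable $j_0\le -2$ the $Q^{k+1}$--component is nonzero; passing to Zhu's algebra this yields $[Q^{k+1}e^{-(k+1)\a}]=\overline{f}([\omega])*[Q^{k-1}e^{-(k-1)\a}]$, and the induction is anchored at $[Qe^{-\a}]=[H]$ and at the nontrivial computed value $[Q^2 e^{-2\a}]=D\,f_p([\omega])$, $D\ne 0$, from the earlier triplet paper. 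Your proposal supplies no substitute for either of these two inputs, so the divisibility by $f_p$ is never actually established.
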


\begin{proof}
Let $Y(H,z) = \sum_{ j \in \Z} H_j z ^{-j-1}$.
The results from \cite{A-2003} and \cite{AdM-2007} imply that
$$ H_{j} Q ^k e ^{-k \a} \in U(Vir) Q ^{k+1} e ^{-(k+1) \a} \oplus U(Vir)Q ^{k-1} e ^{ -(k-1) \a}, $$
where $j \in {\Z}$ and $ k \ge 1$. Moreover there exists $j_0 \le -2$ such that
$$ H_{j_0} Q ^k e ^{-k \a} = C Q ^{k+1} e ^{-(k+1) \a} + f(\omega) Q ^{k-1} e ^{ -(k-1) \a} \quad (C \ne 0, \ f(\omega) \in U(Vir)).$$
This easily implies that in $A(\mip)$
$$ [Q ^{k+1} e ^{-(k+1) \a}] = \overline{f}([\omega]) * [ Q ^{k-1} e ^{ -(k-1) \a}]$$
for certain $\overline{f} \in {\C}[x]$. By induction, we now have that
$$[ Q ^n e ^{-n \a}]   = A([\omega]) * [H]  + B([\omega]) *[Q ^2 e ^{-2 \a}]. $$
 From \cite{AdM-triplet} we have that $ [Q ^2 e ^{-2 \a}] = D f_p([\omega])$, $D \ne 0$. The proof follows.
\end{proof}

Now we shall use results from Section \ref{hom-zhu} and obtain the following important result.

\begin{theorem} \label{jezgra} We have
\bea \mbox{Ker} ( \Phi) &=&A(\mip) .( p([\omega]) * [H] ) \nonumber \\ &\cong&  \mbox{span} _{\C} \{ A([\omega]) *p([\omega])* [H] + B([\omega]) * f_p([\omega]), \quad A, B \in {\C}[x] \}, \nonumber \eea
where
$$p(x) = \prod_{i=2p} ^{3p-1} (x- h_{i,1}), \quad f_p(x) = \prod_{i=1} ^{3p-1} (x- h_{i,1}).$$
\end{theorem}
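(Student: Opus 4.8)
The goal is to identify $\mbox{Ker}(\Phi)$ precisely, knowing from Proposition \ref{prop-1} that it is contained in $[G(V_{-1})] = [Q(\triplet_{-1})]$. By the preceding lemma, $Q(\triplet_{-1}) = \bigoplus_{n \ge 1} U(Vir). Q^n e^{-n\a}$, so $\mbox{Ker}(\Phi)$ is spanned by the classes $[u]$ with $u \in \bigoplus_{n\ge 1} U(Vir). Q^n e^{-n\a}$, viewed inside $A(\mip)$. The plan is: (1) use Lemma \ref{lema-2} to pin down exactly which elements of $A(\mip) = \C[x,y]/\langle P(x,y)\rangle$ arise this way; (2) show this set is precisely the ideal $A(\mip).\big(p([\omega]) * [H]\big)$; (3) verify the claimed spanning-set description on the right-hand side.

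First I would analyze the image of $\bigoplus_{n\ge 1}U(Vir).Q^n e^{-n\a}$ in $A(\mip)$. Lemma \ref{lema-2} shows that modulo $O(\mip)$ every $[Q^n e^{-n\a}]$, $n\ge 1$, lies in the $\C[x]$-submodule generated by $[H]$ and $f_p([\omega])$; since $O(\mip)$-classes of elements in $U(Vir).Q^n e^{-n\a}$ (for varying $n$) are obtained by acting with Virasoro modes — which in $A(\mip)$ amounts to multiplying by polynomials in $[\omega]=x$ — the image is contained in $\{A(x)*[H] + B(x)*f_p(x)\}$. The key refinement needed is that the $[H]$-coefficients $A(x)$ that actually occur are exactly the multiples of $p(x) = \prod_{i=2p}^{3p-1}(x-h_{i,1})$. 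This is where the earlier structural results about the singlet and triplet must be invoked carefully: the factor $p(x)$ should appear because $Qe^{-\a} = H$ itself contributes $A(x)=1$, but the \emph{higher} $Q^n e^{-n\a}$ (which is where the genuinely new kernel elements beyond $[H]$ itself live, via the recursion $[Q^{k+1}e^{-(k+1)\a}] = \bar f([\omega])*[Q^{k-1}e^{-(k-1)\a}]$) force divisibility by the extra factors $\prod_{i=2p}^{3p-1}(x-h_{i,1})$ — these $h_{i,1}$ being precisely the conformal weights governing the $2$-dimensional top components $\Pi(i)$, i.e. the values of $x$ where $[H]$ must act nontrivially. I expect the cleanest route is to identify the $\C[x]$-module generated by all $[Q^ne^{-n\a}]$ ($n\ge1$) with $p(x)\C[x]\cdot[H] + f_p(x)\C[x]$ directly from the explicit structure in \cite{AdM-triplet}, rather than re-deriving it.

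Next I would check that $\mbox{span}_{\C}\{A(x)*p(x)*[H] + B(x)*f_p(x) : A,B\in\C[x]\}$ is in fact an ideal of $A(\mip)$ and equals $A(\mip).(p([\omega])*[H])$. The first summand $p(x)\C[x]*[H]$ is clearly a $\C[x]$-submodule; multiplying $p(x)*[H]$ by $[H]$ uses the relation $P(x,y)=0$, i.e. $[H]*[H] = \frac{(4p)^{2p-1}}{(2p-1)!^2}(x+\tfrac{(p-1)^2}{4p})\prod_{i=0}^{p-2}(x+\tfrac{i}{4p}(2p-2-i))^2$, and one must verify that $p(x)$ times this polynomial is a $\C[x]$-multiple of $f_p(x)$ — equivalently that $f_p(x) \mid p(x)\cdot(\text{RHS polynomial})$, which should follow from matching the roots $h_{i,1}$ of $f_p$ against the explicit quadratic/linear factors (the roots of the RHS polynomial are exactly $\{h_{i,1} : 1\le i \le 2p-1\}$ with the stated multiplicities, while $p(x)$ supplies the roots $h_{i,1}$, $2p\le i\le 3p-1$, so the product covers all roots of $f_p = \prod_{i=1}^{3p-1}(x-h_{i,1})$). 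That root-matching computation is the main technical obstacle — it requires carefully comparing $h_{i,1} = \frac{(p-i)^2-(p-1)^2}{4p}$ for the various ranges of $i$ with the factors $x+\frac{i}{4p}(2p-2-i)$ and checking multiplicities — but it is routine once set up. Granting it, $A(\mip).(p([\omega])*[H])$ contains both $p(x)\C[x]*[H]$ and $f_p(x)\C[x]$, and is contained in the displayed span by the same relation, giving the asserted equality; combined with step one, this identifies $\mbox{Ker}(\Phi)$ exactly as claimed.
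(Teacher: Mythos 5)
There is a genuine gap. Your starting point reads Proposition \ref{prop-1} as an equality: you assert that $\mbox{Ker}(\Phi)$ is \emph{spanned} by the classes of elements of $Q(\triplet_{-1})=\bigoplus_{n\ge1}U(Vir).Q^ne^{-n\a}$, and your plan is then to compute the $\C[x]$-module generated by the $[Q^ne^{-n\a}]$, $n\ge1$, and identify it with $p([\omega])\C[x]*[H]+\C[x]*f_p([\omega])$. But Proposition \ref{prop-1} only gives an inclusion $\mbox{Ker}(\Phi)\subset[Q(\triplet_{-1})]$, and this inclusion is strict: $[Qe^{-\a}]=[H]$ lies in that image (with coefficient $A(x)=1$, as you yourself note), while $[H]\notin\mbox{Ker}(\Phi)$ --- $o(H)$ acts nontrivially on the top components of the modules $\Pi(i)$, so $\Phi([H])\ne 0$; equivalently, $[H]\in A(\mip).(p([\omega])*[H])$ would force $1=A(x)p(x)$ by Remark \ref{after-zhu-alg}, which is impossible. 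Hence the $\C[x]$-module generated by all $[Q^ne^{-n\a}]$, $n\ge1$, is $\C[x]*[H]+\C[x]*f_p([\omega])$, strictly larger than the kernel, and no analysis of these singular vectors alone can produce the crucial divisibility $p(x)\mid A(x)$: that divisibility is a consequence of the condition $\Phi(u)=0$, not of membership in $[Q(\triplet_{-1})]$. This is precisely the idea missing from your proposal.

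The paper extracts the divisibility from the ideal property of the kernel: if $u=A([\omega])*[H]+B([\omega])*f_p([\omega])\in\mbox{Ker}(\Phi)$, then $[H]*u\in\mbox{Ker}(\Phi)$, so it is again of the form $\tilde A([\omega])*[H]+\tilde B([\omega])*f_p([\omega])$; on the other hand $[H]*u=A([\omega])*g([\omega])+B([\omega])*f_p([\omega])*[H]$, where $[H]^2=g([\omega])$ with $g(x)=C_p\prod_{i=1}^{2p-1}(x-h_{i,1})$, and the algebraic independence of $1$ and $[H]$ over $\C[x]$ (Remark \ref{after-zhu-alg}) gives $\tilde B(x)f_p(x)=\tilde B(x)p(x)g(x)=A(x)g(x)$, hence $p(x)\mid A(x)$. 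Moreover, your proposal never establishes the reverse inclusion: one still needs $\Phi\bigl(p([\omega])*[H]\bigr)=\Phi\bigl(f_p([\omega])\bigr)=0$ in $A(\triplet)$, a nontrivial input quoted from \cite{AdM-triplet} (it is the relation coming from $[E\circ F]$, cf. Theorem \ref{identity-triplet}). The root-matching computation you single out as the main obstacle only shows that the displayed span coincides with the ideal generated by $p([\omega])*[H]$; it does not show that this ideal is contained in $\mbox{Ker}(\Phi)$.
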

\begin{proof} By using Proposition \ref{prop-1} and  Lemma \ref{lema-2} we conclude
$$ \mbox{Ker} (\Phi) \subset \{ A([\omega]) * [H] + B([\omega]) * f_p([\omega]), \quad A, B \in {\C}[x] \}. $$
Let $ u \in  \mbox{Ker} (\Phi)$. Then
$$ u = A([\omega]) * [H] + B([\omega]) * f_p([\omega])$$
for some $A, B \in {\C}[x]. $

Since $\mbox{Ker}(\Phi)$ is an ideal in $A(\mip)$ we have
$$[H]*u=\tilde{A}([\omega]) * [H] + \tilde{B}([\omega]) * f_p([\omega]),$$
for some $\tilde{A}, \tilde{B} \in {\C}[x]$. On the other hand
$$ [H] * u = A([\omega]) * [H] ^2 + B([\omega]) * f_p([\omega]) * [H] \in  \mbox{Ker}(\Phi). $$
Because of
$$A([\omega]) * [H] ^2 = A([\omega]) * g ([\omega]), \quad g(x)= C_p \prod_{i=1} ^{2p-1} (x-h_{i,1}),$$
and Remark \ref{after-zhu-alg} we conclude that $\tilde{B}(x)f_p(x)=\tilde{B}(x)p(x)g(x)=A(x)g(x)$.
Therefore, $p(x) \vert A(x)$. In this way we have proved that

$$  \mbox{Ker} (\Phi) \subseteq \{ A_1([\omega]) * p([\omega]) * [H] + B_1([\omega]) * f_p([\omega]), \quad A_1, B_1 \in {\C}[x] \}. $$
Now the results from \cite{AdM-triplet} imply the relation
$$ \Phi \biggl(  p([\omega]) * [H] \biggr)  = \Phi \biggl(f_p([\omega] ) \biggr) =0,$$
which proves the opposite inclusion. The proof follows.
 \end{proof}

\vskip 5mm

Let $$A_0 (\triplet) = \mbox{Im} (\Phi) \cong A(\mip) / \mbox{Ker} ( \Phi). $$

The results from \cite{AdM-triplet} (cf. Theorem 5.1 of \cite{AdM-triplet}) imply that
$$ A(\triplet) = A_{-1} (\triplet) \oplus A_0 (\triplet) \oplus A_1 (\triplet),$$
where $$ A_{-1} (\triplet) = A_0 (\triplet) . [F], \quad   A_{1} (\triplet) = A_0 (\triplet) . [E], $$
and
$$ \dim A_{\pm 1} (\triplet) =p, \quad \dim A_0( \triplet) \le 4p-1.$$
Now Theorem \ref{jezgra} implies  that $\dim A_0 (\triplet) = 4p-1. $
In this way we have proved the following theorem:

\begin{theorem} \label{dim-zhu-triplet} For $p \geq 2$, we have
$$ \dim A(\triplet) = 6 p -1. $$
\end{theorem}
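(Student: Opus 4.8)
\textbf{Proof proposal for Theorem \ref{dim-zhu-triplet}.}

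The plan is to combine the decomposition of $A(\triplet)$ into the three horizontal pieces $A_{-1}(\triplet)\oplus A_0(\triplet)\oplus A_1(\triplet)$ (from Theorem 5.1 of \cite{AdM-triplet}) with the computation of $\mbox{Ker}(\Phi)$ carried out in Theorem \ref{jezgra}. By the cited structural result we already know $\dim A_{\pm 1}(\triplet)=p$ and $\dim A_0(\triplet)\le 4p-1$, so everything reduces to showing the reverse bound $\dim A_0(\triplet)\ge 4p-1$, equivalently $\dim\bigl(A(\mip)/\mbox{Ker}(\Phi)\bigr)=4p-1$, since $A_0(\triplet)\cong A(\mip)/\mbox{Ker}(\Phi)$ via $\Phi$.

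First I would recall from Theorem \ref{zhu-alg} and Remark \ref{after-zhu-alg} that $A(\mip)\cong\mathbb{C}[x,y]/\langle P(x,y)\rangle$ is a free $\mathbb{C}[x]$-module with basis $\{1,[H]\}$; hence every element of $A(\mip)$ is uniquely $A([\omega])+B([\omega])*[H]$ with $A,B\in\mathbb{C}[x]$. Next, using Theorem \ref{jezgra}, the kernel is the principal ideal $A(\mip).\bigl(p([\omega])*[H]\bigr)$, and its explicit $\mathbb{C}$-span is $\{A([\omega])*p([\omega])*[H]+B([\omega])*f_p([\omega])\}$. So I would compute the quotient by noting that in the $\mathbb{C}[x]$-module picture the kernel is exactly $p(x)\cdot(\text{the }[H]\text{-component})\ \oplus\ f_p(x)\cdot(\text{the constant component})$ modulo $P$; more precisely, using the defining relation $y^2 = (\text{something})\cdot x\text{-polynomial}$ one rewrites $[H]^2 = g([\omega])$ with $\deg g = 2p-1$, and then the quotient $A(\mip)/\mbox{Ker}(\Phi)$ has $\mathbb{C}[x]$-module structure $\mathbb{C}[x]/(f_p(x))\ \oplus\ \mathbb{C}[x]/(p(x))\cdot[H]$ — at least this is the natural guess, and one checks it is an honest direct sum by a short degree/consistency argument using that $p(x)\mid f_p(x)$. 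Since $\deg f_p = 3p-1$ and $\deg p = p$, this gives $\dim = (3p-1)+p = 4p-1$, as desired.

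The main obstacle, I expect, is pinning down the quotient dimension rigorously rather than heuristically: one has to be careful that the two "pieces" $f_p(x)$-torsion in the constant slot and $p(x)$-torsion in the $[H]$-slot do not interact, and that the relation $[H]^2=g([\omega])$ together with $f_p = p\cdot g$ is consistent with the kernel description so that no further relations are forced. Concretely I would argue: the $\mathbb{C}[x]$-submodule $\mbox{Ker}(\Phi)\subset\mathbb{C}[x]\oplus\mathbb{C}[x][H]$ contains $(f_p(x),0)$ and $(0,p(x))$ — the first because $\Phi(f_p([\omega]))=0$, the second because $\Phi(p([\omega])*[H])=0$ — and by Theorem \ref{jezgra} it is generated by $(0,p(x))$ as an ideal, whose $\mathbb{C}$-span, after reducing $[H]^2$ via $[H]^2=g([\omega])$, is precisely $\{(B(x)p(x)g(x),\,A(x)p(x))\}=\{(B(x)f_p(x),\,A(x)p(x))\}$; this is a full-rank $\mathbb{C}[x]$-submodule whose cokernel therefore has dimension $\deg f_p + \deg p = 4p-1$. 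Once $\dim A_0(\triplet)=4p-1$ is established, adding the contributions $\dim A_{\pm1}(\triplet)=p$ yields $\dim A(\triplet) = p + (4p-1) + p = 6p-1$, completing the proof.
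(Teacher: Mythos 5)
Your proposal follows the paper's proof essentially verbatim: it combines the decomposition $A(\triplet)=A_{-1}(\triplet)\oplus A_0(\triplet)\oplus A_1(\triplet)$ with $\dim A_{\pm1}(\triplet)=p$ from Theorem 5.1 of \cite{AdM-triplet} with the kernel description in Theorem \ref{jezgra}, concluding $\dim A_0(\triplet)=\dim\bigl(A(\mip)/\mbox{Ker}(\Phi)\bigr)=4p-1$ and hence $\dim A(\triplet)=6p-1$. The only difference is that you spell out the free $\mathbb{C}[x]$-module computation (kernel $=f_p(x)\mathbb{C}[x]\oplus p(x)\mathbb{C}[x]\,[H]$, so the quotient has dimension $(3p-1)+p=4p-1$), a step the paper asserts without detail; your argument for it is correct.
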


In \cite{AdM-triplet}, we have proved that $\dim A(\triplet) \le 6 p -1$ and that $\dim A(\triplet) = 6p -1$ if and only if $A(\triplet)$ contains $2$-dimensional ideals $\mathbb{I}_{h_{i,1}}$ parameterized by conformal weights $h_{i,1}$ (see Section 5 of \cite{AdM-triplet}). These ideals give $(p-1)$ indecomposable $2$-dimensional representations. Then our  Theorem \ref{dim-zhu-triplet} implies that $A(\triplet)$ has these  indecomposable representations.
By using Zhu's correspondence we obtain the following result.

\begin{corollary}
For every $1 \le i \le p-1$, there exits the logarithmic, self-dual,  $\N$--graded $\triplet$--module $\mathcal{P}_{i} ^+$ such that the top component $\mathcal{P}_{i} ^+ (0)$ is two-dimensional and
$L(0)$ acts on   $\mathcal{P}_{i} ^+ (0)$ as
$$\left(
    \begin{array}{cc}
      h_{i,1} & 1 \\
      0 & h_{i,1} \\
    \end{array}
  \right) .$$

\end{corollary}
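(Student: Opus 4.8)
The plan is to invoke Zhu's correspondence between (indecomposable) modules over the Zhu algebra $A(\triplet)$ and (lowest-weight) $\triplet$-modules, applied to the $2$-dimensional indecomposable $A(\triplet)$-modules produced above. From Theorem \ref{dim-zhu-triplet} we know $\dim A(\triplet)=6p-1$, and by the dichotomy established in \cite{AdM-triplet} (Section~5 there) this forces $A(\triplet)$ to contain the family of $2$-dimensional ideals $\mathbb{I}_{h_{i,1}}$, $1\le i\le p-1$, on each of which the image of $[\omega]$ acts by a single Jordan block with eigenvalue $h_{i,1}$ and nilpotent part of rank $2$. So the input from the Zhu algebra side is already in hand; what remains is purely the module-theoretic translation.

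First I would fix $i$ with $1\le i\le p-1$ and let $T_i$ be the $2$-dimensional left $A(\triplet)$-module obtained from the ideal $\mathbb{I}_{h_{i,1}}$ (equivalently, the indecomposable quotient realizing the Jordan block). Zhu's theory gives a functor $L$ from $A(\triplet)$-modules to $\N$-graded $\triplet$-modules generated by their top component, with $L(T_i)(0)\cong T_i$ as an $A(\triplet)$-module; call this module $\mathcal{P}_i^+$. Since $o(\omega)=\omega_1=L(0)$ acts on the top component through the action of $[\omega]\in A(\triplet)$, and $[\omega]$ acts on $T_i$ by the Jordan block $\left(\begin{smallmatrix} h_{i,1} & 1 \\ 0 & h_{i,1}\end{smallmatrix}\right)$ (in a suitable basis, after rescaling the off-diagonal entry to $1$, which is possible because the nilpotent part is nonzero), we get precisely the asserted action of $L(0)$ on $\mathcal{P}_i^+(0)$. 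This establishes that $\mathcal{P}_i^+$ is a logarithmic module of $L(0)$-nilpotent rank $2$ on its top component, hence honestly logarithmic. The $\N$-gradability is automatic from the construction of $L(-)$.

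For self-duality I would argue as follows: the contragredient module $(\mathcal{P}_i^+)'$ is again an $\N$-graded $\triplet$-module whose top component is the dual $A(\triplet)$-module $T_i^*$ (with respect to the anti-involution on $A(\triplet)$ coming from the adjoint $Y(\omega,z)\leftrightarrow$ its formal adjoint), and one checks that $T_i^*\cong T_i$ as $A(\triplet)$-modules because $[\omega]$ is in the image of the (commutative) singlet part and the anti-involution fixes it, so the Jordan form is preserved; then $L(T_i^*)\cong L(T_i)$ gives $(\mathcal{P}_i^+)'\cong \mathcal{P}_i^+$. Alternatively, and perhaps more cleanly, I would cite the fact from \cite{AdM-triplet} that the logarithmic modules attached to the ideals $\mathbb{I}_{h_{i,1}}$ are the projective covers of the irreducibles $\Lambda(i)$, which are known to be self-dual in this setting. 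The main obstacle I anticipate is not the existence (that is now forced by the dimension count) but pinning down the self-duality and ensuring the nilpotent rank on the top component is exactly $2$ rather than collapsing to $1$ — i.e.\ confirming that the off-diagonal entry of the Jordan block is genuinely nonzero; this is exactly the content of the "if and only if" in \cite{AdM-triplet} together with Theorem \ref{dim-zhu-triplet}, so it should go through, but it is the step that carries the real weight of the argument.
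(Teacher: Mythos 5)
Your proposal matches the paper's own argument: the paper likewise combines Theorem \ref{dim-zhu-triplet} with the criterion from \cite{AdM-triplet} (Section 5) that $\dim A(\triplet)=6p-1$ forces the existence of the $2$-dimensional ideals $\mathbb{I}_{h_{i,1}}$, and then invokes Zhu's correspondence to produce the logarithmic $\N$-graded modules with the stated $L(0)$-action on the top component. Your extra discussion of self-duality and of the nonvanishing of the off-diagonal entry only elaborates details the paper leaves to the cited references, so the approach is essentially identical.
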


\begin{remark}
The existence of logarithmic modules (in the case $p$ is a prime number) was proved in \cite{AdM-triplet} by using modular invariance. Here we have presented a proof which use only theory of Zhu's algebras. We show that existence of logarithmic modules can detected directly from the internal structure of Zhu's algebra.
\end{remark}

\begin{theorem} For every $p \geq 2$
$${\rm dim}(\mathcal{P}(\mathcal{W}(p)) \leq 6p-1.$$
\end{theorem}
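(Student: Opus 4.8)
The plan is to bound $\dim \mathcal{P}(\triplet)$ from above by exhibiting a spanning set of $\mathcal{P}(\triplet) = \triplet/C_2(\triplet)$ of size at most $6p-1$, in parallel with the known decomposition of $A(\triplet)$. First I would use the horizontal $\Z$-grading $\triplet = \bigoplus_{\ell \in \Z}\triplet_\ell$ from the Proposition above: since $a_{-2}b \in \triplet_{\ell_1+\ell_2}$ when $a \in \triplet_{\ell_1}$, $b \in \triplet_{\ell_2}$, the ideal $C_2(\triplet)$ is graded, and hence $\mathcal{P}(\triplet) = \bigoplus_{\ell} (\triplet_\ell / (C_2(\triplet) \cap \triplet_\ell))$. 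The grading on $\triplet$ is supported in degrees $\ell$ with $|\ell| \le$ something, but more usefully each $\triplet_\ell$ is generated over $\mip$ (or over $M(1)$) by a single lowest vector $e^{\ell\a}$-type element times the screening, so I expect the whole of $\mathcal{P}(\triplet)$ to be concentrated in degrees $\ell \in \{-1,0,1\}$ modulo $C_2$ — because $e^{-n\a}$ for $n \ge 2$ is expressible via $C_2$-relations coming from $(e^{-\a})_{-2}e^{-(n-1)\a}$. So the first real step is: show that the images of $F$, $E$, and $\mip$ span $\mathcal{P}(\triplet)$, i.e. $\overline{\mathcal{P}(\triplet)}$ is generated as a commutative ring by $\bar\omega, \bar H, \bar E, \bar F$ (no higher lattice vectors needed).

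Next I would analyze the degree-$0$ piece. Here the relevant quotient is $\triplet_0/(C_2(\triplet)\cap\triplet_0)$, and $\triplet_0 = \mip$, so this is a quotient of $\mathcal{P}(\mip) = \mip/C_2(\mip)$ (possibly by extra relations forced by $C_2$-elements $v_{-2}w$ with $v \in \triplet_\ell$, $w \in \triplet_{-\ell}$, $\ell \ne 0$). I expect $\mathcal{P}(\mip)$ to be $\C[\bar\omega,\bar H]$ modulo the $C_2$-analogue of the Zhu relation $P(x,y)$ from Theorem \ref{zhu-alg}, i.e. roughly $\bar H^2 = c\,\bar\omega^{2p-1} + \cdots$, giving a free $\C[\bar\omega]$-module of rank $2$; but the extra cross-degree relations (which on the Zhu side cut $A_0(\triplet)$ down from $A(\mip)$ to dimension $4p-1$ via $\mathrm{Ker}(\Phi) = A(\mip).(p([\omega])*[H])$) should here similarly force $\bar p(\bar\omega)\bar H \equiv 0$ and $\overline{f_p}(\bar\omega)\equiv 0$, leaving $\dim \le (3p-1) + p = 4p-1$ for the degree-$0$ part. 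Then for $\ell = \pm 1$: $\triplet_{\pm1}$ is a cyclic $\mip$-module generated by $F$ (resp. $E$), so its image in $\mathcal{P}$ is a cyclic module over the degree-$0$ ring, and I would argue — again mirroring \cite{AdM-triplet} where $\dim A_{\pm1}(\triplet) = p$ — that the annihilator of $\bar F$ (resp. $\bar E$) in $\C[\bar\omega]$ contains a degree-$p$ polynomial, forcing $\dim \le p$ each.

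Summing: $\dim \mathcal{P}(\triplet) \le (4p-1) + p + p = 6p-1$, which is the claim. The main obstacle I anticipate is the degree-$0$ piece: on the Zhu side the reduction of $A(\mip)$ down to dimension $4p-1$ used the nontrivial identifications $[Q^n e^{-n\a}] = A([\omega])*[H] + B([\omega])*f_p([\omega])$ (Lemma \ref{lema-2}) together with the structure of $\mathrm{Ker}(\Phi)$, and the analogous $C_2$-computations are not automatic from the Zhu-algebra ones — the filtration map $\mathrm{gr}\,A(V) \to \mathcal{P}(V)$ goes the "wrong way" to directly transfer the upper bound, so one has to redo the relevant singular-vector and screening-operator computations at the level of $C_2(\triplet)$. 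Concretely, the hard lemma will be the $\mathcal{P}$-version of Lemma \ref{lema-2}, computing $\overline{Q^n e^{-n\a}}$ in $\mathcal{P}(\mip)$ in terms of $\bar\omega$ and $\bar H$; once that is in hand, the bookkeeping producing $6p-1$ is routine and parallels the argument already given for $A(\triplet)$.
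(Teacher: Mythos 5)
Your framework (grading, generators $\overline{\omega},\overline{H},\overline{E},\overline{F}$, and the count $(4p-1)+p+p=6p-1$) is consistent with what the paper does, but the proposal stops exactly where the theorem's actual content begins: you never prove the relations that cut the degree-$(\pm 1,0)$ pieces down to size, namely $\overline{\omega}^{p}\,\overline{E}=\overline{\omega}^{p}\,\overline{F}=\overline{\omega}^{p}\,\overline{H}=0$ (the leading-term content of your "$\bar p(\bar\omega)\bar H\equiv 0$" and of the degree-$p$ annihilator of $\overline{E},\overline{F}$). With only the relations already known from \cite{AdM-triplet} (the analogues of (\ref{c2-1})), the spanning-set count gives roughly $4(3p-1)$, not $6p-1$, so these relations are not bookkeeping — they are the theorem. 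Moreover, the route you sketch for them (a "$\mathcal{P}$-version of Lemma \ref{lema-2}$", i.e. computing $\overline{Q^{n}e^{-n\a}}$ in $\mathcal{P}(\mip)$) is not what is needed: the missing relations live in products like $\overline{\omega}^{p}\overline{H}$ and come from cross-degree $C_2$-elements, not from expressing the singular vectors $Q^ne^{-n\a}$.

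The paper's mechanism, which your proposal does not contain, is the following. Since $H_{-2}F\in C_2(\triplet)$ by definition and has weight $4p-1$, the decomposition (\ref{dec-triplet}) forces $H_{-2}F=\nu L(-2)^{p}F+v_1$ with $v_1\in C_2(\triplet)$; applying $Q$ (which preserves $C_2(\triplet)$) gives the analogous statements for $E_{-2}F$ and $E_{-2}H$. The whole issue is then the non-vanishing of the scalar $\nu$: if $\nu=0$ these identities say nothing. The paper pins down $\nu\neq 0$ by comparing with the Zhu-algebra identity $[E\circ F]=\nu_1\, p([\omega])*[H]$, $\nu_1\neq 0$, which rests on the new constant term identity (Theorem \ref{identity-triplet}); matching leading filtration terms gives $\nu=\nu_1\neq 0$, hence $\overline{\omega}^{p}\overline{X}=0$ for $X\in\{E,F,H\}$, i.e. (\ref{c2-2}), and the count follows. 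Note also that your remark that Zhu-algebra information "goes the wrong way" is only half right (and you state the surjection backwards: it is $\mathcal{P}(V)\twoheadrightarrow \mathrm{gr}\,A(V)$): one indeed cannot transfer an upper bound on $\dim A(V)$ to $\mathcal{P}(V)$ through that map, but the paper does crucially import a Zhu-algebra computation — the non-vanishing of the coefficient of $p([\omega])*[H]$ in $[E\circ F]$ — into the $C_2$-world via the weight argument above. Without that non-vanishing input (or an equivalent residue computation), your proposed bound does not follow.
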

\begin{proof}
The  description of $C_2(\triplet)$ from \cite{AdM-triplet} gives that  $\mathcal{P}(\mathcal{W}(p))$ is generated by
$$\overline{\omega}, \overline{H}, \overline{E}, \overline{F},$$
(here $\overline{a}$ denotes the image of $a$ in $\mathcal{P}(\triplet)$),
and that the following relations hold:
\bea \label{c2-1}
&&\overline{\omega} ^ {3 p -1} =  \overline{E} ^2 = \overline{F} ^2 = \overline{H} \cdot \overline{E} = \overline{H}  \cdot \overline{F} = 0, \\
&& \overline{H} ^2 = - \overline{E} \cdot \overline{F} = {\nu}  \overline{\omega} ^{ 2p -1} \quad (\nu  \ne 0).
\eea
By using (\ref{dec-triplet}) and the fact that $\mbox{wt}( H_{-2} F)  =4p-1$ we get
$$ H_{-2} F = \nu L(-2) ^p  F + v_1, \quad v_1 \in C_2 (\triplet), \quad \nu \in {\C}. $$
Applying the screening operator  $Q$ and using the fact that $C_2(\triplet)$ is $Q$--invariant, we get
$$ E_{-2} F = \nu L(-2) ^p H  +v_2, \quad E_{-2} H = \nu L(-2) ^p E  +v_3$$
where $v_2, v_3 \in C_2 (\triplet)$.

Another important ingredient is Theorem \ref{identity-triplet} below, which implies that in $A(\mip)$ we have the following relation
$$ [E \circ F ] = \nu_1  p([\omega]) * [H], \qquad (\nu_1 \ne 0).$$
But this relation in Zhu's algebra easily gives that $\nu = \nu_1 \ne 0$.

Therefore we have proved
\bea \label{c2-2} &&\overline{\omega} ^ p \overline{H}=\overline{\omega} ^ p \overline{E}= \overline{\omega} ^p\overline{F}=0. \eea
Finally,  relations (\ref{c2-1}) -- (\ref{c2-2}) prove the inequality
$${\rm dim}(\mathcal{P}(\mathcal{W}(p)) \leq 6p-1.$$
\end{proof}
Combined together
\begin{corollary} We have
$$ \dim A(\triplet) = \dim (\mathcal{P}(\triplet))=6p-1. $$
\end{corollary}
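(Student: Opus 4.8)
The plan is to combine the two inequalities just established, namely $\dim A(\triplet) = 6p-1$ from Theorem \ref{dim-zhu-triplet} and $\dim(\mathcal{P}(\triplet)) \leq 6p-1$ from the preceding theorem, with the general inequality (\ref{ineq}), $\dim(\mathcal{P}(V)) \geq \dim(A(V))$, valid for $C_2$-cofinite $V$. Since $\triplet$ is known to be $C_2$-cofinite \cite{AdM-triplet}, inequality (\ref{ineq}) applies and gives $\dim(\mathcal{P}(\triplet)) \geq \dim A(\triplet) = 6p-1$. Together with the upper bound $\dim(\mathcal{P}(\triplet)) \leq 6p-1$ this forces $\dim(\mathcal{P}(\triplet)) = 6p-1 = \dim A(\triplet)$, which is exactly the asserted equality.

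Concretely, I would carry out the argument in three short steps. First, invoke $C_2$-cofiniteness of $\triplet$ (from \cite{AdM-triplet}) so that $\mathcal{P}(\triplet)$ is finite-dimensional and the surjection from ${\rm gr}A(\triplet)$-covering structure recalled in Section 2 yields the chain $\dim(\mathcal{P}(\triplet)) \geq \dim({\rm gr}A(\triplet)) = \dim A(\triplet)$. Second, substitute the exact value $\dim A(\triplet) = 6p-1$ from Theorem \ref{dim-zhu-triplet}. Third, combine with the already-proven bound $\dim(\mathcal{P}(\triplet)) \leq 6p-1$ to sandwich the dimension and conclude equality.

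In fact there is essentially no obstacle here: the corollary is a formal consequence of results already proved in the excerpt. The only point requiring minimal care is making sure the two cited bounds really refer to the same object $\triplet$ with the same range $p \geq 2$, which they do. One may additionally remark that equality of dimensions implies the natural surjection $\mathcal{P}(\triplet) \twoheadrightarrow {\rm gr}A(\triplet)$ is an isomorphism of graded vector spaces, so that $\mathcal{P}(\triplet) \cong {\rm gr}A(\triplet)$ as Poisson/commutative algebras; this is worth stating since it is the conceptual content of the corollary, but it is not strictly needed for the dimension equality itself.

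Thus the proof is immediate: from $C_2$-cofiniteness and (\ref{ineq}) we get $\dim(\mathcal{P}(\triplet)) \geq \dim A(\triplet)$; Theorem \ref{dim-zhu-triplet} gives $\dim A(\triplet)=6p-1$; the previous theorem gives $\dim(\mathcal{P}(\triplet)) \leq 6p-1$; hence both equal $6p-1$. $\epfv$
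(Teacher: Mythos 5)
Your proof is correct and follows exactly the paper's own argument: the corollary is obtained by combining the general inequality (\ref{ineq}) (valid since $\triplet$ is $C_2$-cofinite) with $\dim A(\triplet)=6p-1$ from Theorem \ref{dim-zhu-triplet} and the upper bound $\dim \mathcal{P}(\triplet)\leq 6p-1$ from the preceding theorem. The paper simply states "Combined together," and your sandwich argument is precisely that combination spelled out.
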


\section{On Zhu's algebra $A(\WW_{2,3})$ }
\label{log-ext-min}

In this section we shall consider the triplet vertex algebra $\WW_{2,p}$ from \cite{AdM-2009} and \cite{FGST-log}.

Assume $p$ is an odd natural number, $p \ge 3$, and  let $$L = {\Z}
\alpha, \ \ \ \la \alpha , \alpha \ra = p.$$  Let $V_L$ be the associated vertex superalgebra.
We recall that $V_L$ is generated by vectors $ e^{ \a}$ and $e ^{-\alpha}$. As usual, let
$$Y( e^{\beta}, z) = \sum_{i \in \Z} e ^{\beta}_i z^{-i-1}, \quad \beta \in L$$

Define the Virasoro vector
$$\omega = \frac{1}{2 p} (\a (-1) ^2 + (p-2) \a (-2)){\bf 1}$$
and the (screening) operators
$Q = e ^{\a}_0$ and
$$G =\sum_{i =1 } ^{\infty} \frac{1}{i} e ^{\a}_{-i} e ^{\a}_i .$$

As shown in \cite{AdM-2009} the triplet vertex algebra $\WW_{2,p}$ can be realized as a subalgebra of $V_L$ generated by $\omega$ and primary vectors
$$ F = Q e ^{-3 \a}, \quad H = G F, \quad E = G ^2 F.$$

The results from \cite{AdM-2009} give that the results from Section \ref{hom-zhu} can be applied on $\WW_{2,p}$. In particular $\WW_{2,p}$ is ${\Z}$--graded and charge zero component is the singlet vertex algebra $\mip$ generated by $\omega$ and $H$.
We also have homomorphism $\Phi : A(\mip) \rightarrow A(\WW_{2,p})$.

We shall now consider the case $p=3$. By using analogous approach as in the case of $\triplet$ we prove the following result.

\begin{proposition} \label{ker-c0}
$\mbox{Ker}(\Phi)$ is contained in the following ideal in $A(\mip)$
$$ A(\mip) . \{ p([\omega]) * [H], f_{2,3}([\omega]) \} = \{ A([\omega]) * p([\omega]) * [H] + B([\omega]) * f_{2,3}([\omega]), \ \ A, B \in {\C}[x] \}, $$
where
 \bea  p(x) =  &&  (x-5) (x-7) (x-\tfrac{10}{3})
(x-\tfrac{33}{8})
  (x-\tfrac{21}{8}) (x-\tfrac{35}{24}) \nonumber \\
 f_{2,3}(x)=&& x ^3  \left( (x-1) (x-2) (x
-\tfrac{1}{8}) (x-\tfrac{5}{8}) (x-\tfrac{1}{3}) \right) ^2
\nonumber \\ && (x-5) (x-7) (x-\tfrac{10}{3})(x+\tfrac{1}{24})
(x-\tfrac{33}{8})
  (x-\tfrac{21}{8}) (x-\tfrac{35}{24}) . \nonumber \eea

\end{proposition}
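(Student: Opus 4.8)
The plan is to mimic, step by step, the argument that established Theorem \ref{jezgra} for the triplet $\triplet$, now in the setting of $\WW_{2,3}$. By Proposition \ref{prop-1} we already know $\mbox{Ker}(\Phi) \subset [G(\triplet_{-1})]$ (with $G$ the screening operator for $\WW_{2,p}$ playing the role of $Q$), so the first task is to identify the image of the $(-1)$-component under $G$ inside $A(\mip)$. Concretely, I would first show an analogue of Lemma \ref{lema-2}: using the structural results from \cite{AdM-2009} on how the modes $H_j$ act between the charged components $\WW_{2,3}{}_{\ell}$, together with the fact that the singlet submodule is generated over $U(Vir)$ by the vectors $G^n e^{-\text{(something)}\a}$, one deduces by induction on $n$ that in $A(\mip)$ every class $[G^n(\text{lowest vector})]$ lies in $\mathbb{C}[[\omega]]*[H] + \mathbb{C}[[\omega]]*f_{2,3}([\omega])$. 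This reduces $\mbox{Ker}(\Phi)$ to the form $A([\omega])*[H] + B([\omega])*f_{2,3}([\omega])$ for polynomials $A,B$.

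The second step is the divisibility argument, exactly as in the proof of Theorem \ref{jezgra}. Since $\mbox{Ker}(\Phi)$ is a two-sided ideal of $A(\mip)$, multiplying a putative element $u = A([\omega])*[H] + B([\omega])*f_{2,3}([\omega])$ by $[H]$ and using the relation $[H]^2 = g([\omega])$ in the commutative algebra $A(\mip) = \mathbb{C}[x,y]/\langle P(x,y)\rangle$ (Theorem \ref{zhu-alg}, Remark \ref{after-zhu-alg}) forces, by algebraic independence of $1$ and $[H]$ over $\mathbb{C}[x]$, a polynomial identity of the shape $\tilde B(x)\,f_{2,3}(x) = A(x)\,g(x)$. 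Comparing this with $f_{2,3}(x) = p(x)\,g(x)\cdot(\text{unit})$ — i.e. checking that the "extra" factor $f_{2,3}/g$ equals $p$ up to the adjustment by $(x+\tfrac1{24})$ appearing in the stated formula — yields $p(x)\mid A(x)$, so that $u \in A(\mip).\{p([\omega])*[H],\, f_{2,3}([\omega])\}$. This gives the claimed inclusion. (Note the proposition only asserts "contained in", so unlike Theorem \ref{jezgra} I need \emph{not} prove the reverse inclusion, i.e. I need not verify $\Phi(p([\omega])*[H]) = \Phi(f_{2,3}([\omega])) = 0$; that is presumably handled separately when the dimension of $A(\WW_{2,3})$ is pinned down.)

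The main obstacle, I expect, is the bookkeeping in the first step: at $c=0$ the conformal weights $h_{r,s}$ for the $(2,3)$ lattice are the explicit rationals listed in the statement, and one must track precisely which of them occur, and with which multiplicities, in the relevant singlet modules — this is what produces the exact polynomials $p(x)$ and $f_{2,3}(x)$, including the somewhat mysterious asymmetry (the cube $x^3$, the squared block, and the lone linear factor $(x+\tfrac1{24})$ versus its "partner" $(x-\tfrac{35}{24})$). Getting these exponents right requires invoking the detailed decomposition of $\WW_{2,3}$ as a $Vir$-module and the action of $Q$ and $G$ from \cite{AdM-2009}, and possibly one of the constant-term identities alluded to in the introduction. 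Once the correct $f_{2,3}$ and $g$ (the analogue of $g(x) = C_p\prod_{i=1}^{2p-1}(x-h_{i,1})$) are in hand, the divisibility step is formally identical to the $\triplet$ case and should go through verbatim.
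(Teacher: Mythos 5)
Your outline does follow the route the paper intends: the paper gives no separate argument for this proposition beyond declaring it analogous to the triplet case, i.e.\ Proposition \ref{prop-1}, an analogue of Lemma \ref{lema-2} landing in $\{A([\omega])*[H]+B([\omega])*f_{2,3}([\omega])\}$, and then the ideal/algebraic-independence divisibility trick from the proof of Theorem \ref{jezgra}; you also correctly observe that only the containment is claimed, so no reverse inclusion is needed here. The problem is that the two inputs on which this scheme pivots are not supplied, and one of them is justified by appeal to the wrong object. The singlet $\mip$ of Section \ref{log-ext-min} is \emph{not} the $(1,p)$ singlet of Theorem \ref{zhu-alg}: it is the $c=0$ singlet generated by $\omega$ and the weight-$15$ field $H=GF$ inside the lattice algebra with $\la\alpha,\alpha\ra=3$, so you cannot invoke Theorem \ref{zhu-alg} and Remark \ref{after-zhu-alg} as you do. The facts you need about this algebra --- the relation $[H]^2=g([\omega])$ with an \emph{explicit} polynomial $g$, the algebraic independence of $1$ and $[H]$ over $\mathbb{C}[[\omega]]$, and the evaluation of the charge-zero vectors in $G(V_{-1})$ in the form $A([\omega])*[H]+B([\omega])*f_{2,3}([\omega])$ (the analogue of $[Q^2e^{-2\alpha}]=Df_p([\omega])$ from \cite{AdM-triplet}) --- must be imported from \cite{AdM-2009} or proved; they are exactly where the specific polynomials $p$ and $f_{2,3}$, with their multiplicities, come from, and your proposal leaves all of this open while acknowledging it only as ``bookkeeping.''

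The second, more serious issue is the divisibility step itself. You rest it on the asserted factorization $f_{2,3}(x)=p(x)g(x)\cdot(\mathrm{unit})$ ``up to the adjustment by $(x+\tfrac1{24})$,'' which is both internally vague and not something you can copy from the triplet case: there the equality $f_p=p\cdot g$ (up to a constant) holds exactly, with $\deg g={\rm wt}(H)=2p-1$, and it is what makes $p(x)\mid A(x)$ immediate. Here ${\rm wt}(H)=15$ while $\deg f_{2,3}-\deg p=14$, so the precise relation among $g$, $p$ and $f_{2,3}$ has to be computed, not assumed. What your comparison $\tilde B(x)f_{2,3}(x)=A(x)g(x)$ actually requires in order to conclude $p\mid A$ is only that $p\mid f_{2,3}$ (clear from the displayed formulas) and that $g$ has no root in common with $p$, i.e.\ that $[H]^2$ does not vanish at $5,7,\tfrac{10}{3},\tfrac{33}{8},\tfrac{21}{8},\tfrac{35}{24}$ --- a statement about the explicit $g$ of the $c=0$ singlet. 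Until that (or a verified factorization of $f_{2,3}$ through $g$) is established, the key deduction $p\mid A$, and hence the stated containment, is not justified by your argument.
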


\begin{corollary}
  The center of Zhu's algebra $A(\WW_{2,3})$ is $20$--dimensional and  it is  isomorphic to $${\C}[x] / \la f_{2,3}(x) \ra. $$
\end{corollary}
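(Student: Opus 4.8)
The plan is to derive the corollary from Proposition~\ref{ker-c0} by pinning down $\mbox{Ker}(\Phi)$ exactly, so that $A_0(\WW_{2,3}) = A(\mip)/\mbox{Ker}(\Phi)$ can be identified, and then extracting the center from this quotient. First I would observe that by the general structural result of Section~\ref{hom-zhu} and the analogue of Theorem~\ref{jezgra}, the ideal $\mbox{Ker}(\Phi)$ is generated by a single element $p([\omega]) * [H]$ (together with the fact that $f_{2,3}([\omega]) \in \mbox{Ker}(\Phi)$, which is forced since $f_{2,3} = p \cdot g$ for the appropriate $g \in {\C}[x]$ coming from the $[H]^2$-relation in $A(\mip)$, exactly as in the proof of Theorem~\ref{jezgra}). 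So the inclusion of Proposition~\ref{ker-c0} is in fact an equality, and $A_0(\WW_{2,3}) \cong A(\mip)/(p([\omega])*[H])$.

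Next I would compute the dimension and structure of this quotient. Using Theorem~\ref{zhu-alg}, $A(\mip) \cong {\C}[x,y]/\la P(x,y)\ra$ with $y = [H]$, $x = [\omega]$, and $P(x,y) = y^2 - c\cdot q(x)$ for the explicit polynomial $q(x)$ (the square appearing in $f_{2,3}$ up to the unsquared factors: indeed $f_{2,3} = q \cdot p$ should hold, where $q(x) = x^3((x-1)(x-2)(x-\tfrac18)(x-\tfrac58)(x-\tfrac13))^2 (x+\tfrac1{24})$). Modding out by the ideal generated by $p(x)y$, one splits $A(\mip)/(p(x)y)$ using $y^2 = c\, q(x)$: a ${\C}[x]$-basis is $\{1, y\}$, and in the quotient $y$ is annihilated by $p(x)$ while the "$1$-part" is governed by $f_{2,3}(x) = q(x)p(x)$ (since $p(x)y^2 = c\,p(x)q(x) = c\,f_{2,3}(x)$ must vanish, forcing $f_{2,3}(x)=0$ on the $1$-component, and $\deg q + \deg p = \deg f_{2,3} = 20$... wait—$\deg f_{2,3} = 3 + 2\cdot 7 + 7 = 24$). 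So the center, which is precisely the subalgebra generated by $[\omega]$ inside $A_0(\WW_{2,3})$, is ${\C}[x]/\la f_{2,3}(x)\ra$; one then checks $\deg f_{2,3}$ against the claimed $20$ by accounting for repeated roots — $f_{2,3}$ as written has degree $24$ but only $20$ distinct roots (the roots $0, 1, 2, \tfrac18, \tfrac58, \tfrac13$ appear with multiplicity $\ge 2$), and since ${\C}[x]/\la f_{2,3}\ra$ only sees the radical-plus-multiplicity structure I would recount: the claim "$20$-dimensional" should match $\deg(f_{2,3})$ after the correct normalization, so I would double-check the exponents in Proposition~\ref{ker-c0} and the squaring convention, reconciling with the general principle that $\dim(\text{center}) = \dim {\C}[x]/\la f\ra = \deg f$.

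The remaining step is identifying the center of $A(\WW_{2,3})$ with the center of $A_0(\WW_{2,3})$. Here I would invoke the decomposition $A(\WW_{2,3}) = A_{-1} \oplus A_0 \oplus A_1$ with $A_{\pm 1} = A_0 \cdot [F], A_0 \cdot [E]$ (the $\WW_{2,3}$-analogue of Theorem~5.1 of \cite{AdM-triplet}), and argue — as in the $\triplet$ case — that a central element must lie in $A_0$ (since conjugating by the grading-shifting elements $[E], [F]$ forces the off-diagonal components to vanish), and that within $A_0$ centrality reduces to commuting with $[H]$, which by the polynomial-algebra structure (Remark~\ref{after-zhu-alg}) isolates exactly the ${\C}[x]$-subalgebra, i.e. ${\C}[x]/\la f_{2,3}(x)\ra$.

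The main obstacle I expect is the bookkeeping of the polynomial $f_{2,3}$ and its relation to $p$ and the "$[H]^2 = g([\omega])$" polynomial: getting the constant-term/screening-operator computation that produces $f_{2,3} = p \cdot g$ correct (this is the $c=0$ analogue of the computation behind Lemma~\ref{lema-2} and Theorem~\ref{identity-triplet}), and reconciling the written degree of $f_{2,3}$ with the asserted dimension $20$ of the center — which will hinge on whether the stated exponents and the squared block are exactly as in the source computation. The conceptual steps all transport from the $\triplet$ case; the arithmetic verification of the explicit polynomials is where care is needed.
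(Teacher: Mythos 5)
Your overall strategy (use Proposition~\ref{ker-c0}, identify the center with the subalgebra generated by $[\omega]$, conclude ${\C}[x]/\la f_{2,3}\ra$) is the paper's, but as written the argument has two genuine problems. The first is arithmetic but consequential: the squared block in $f_{2,3}$ has \emph{five} linear factors, so $\deg f_{2,3}=3+2\cdot 5+7=20$, not $24$. There is no discrepancy to ``reconcile'': $\dim {\C}[x]/\la f_{2,3}\ra=\deg f_{2,3}=20$ on the nose, and your proposal, instead of proving the dimension claim, ends by doubting the statement and deferring to a recount. The second problem is logical. You take for granted that $\mbox{Ker}(\Phi)$ is the principal ideal generated by $p([\omega])*[H]$, i.e.\ that the inclusion of Proposition~\ref{ker-c0} is an equality ``by the analogue of Theorem~\ref{jezgra}.'' But the essential input to Theorem~\ref{jezgra} was the fact $\Phi\bigl(p([\omega])*[H]\bigr)=0$, imported from \cite{AdM-triplet}; for $\WW_{2,3}$ the corresponding membership $p([\omega])*[H]\in\mbox{Ker}(\Phi)$ is \emph{not} available at the point of the corollary --- the paper establishes it only afterwards, from $0=[F\circ E]=g([\omega])*p([\omega])*[H]$ together with a nontrivial (Mathematica-assisted) verification that $g$ is relatively prime to $f_{2,3}$, so that $\Phi(g([\omega]))$ is invertible. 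Relatedly, your factorization $f_{2,3}=p\cdot q$ with $[H]^2=c\,q([\omega])$ in $A(\mip)$ is a guess: Theorem~\ref{zhu-alg} describes the $(1,p)$ singlet, not the singlet sitting inside $\WW_{2,p}$, and no such relation is stated in this paper.

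The paper's proof needs much less. From the inclusion in Proposition~\ref{ker-c0} and the algebraic independence of $1$ and $[H]$ over ${\C}[x]$ (as in Remark~\ref{after-zhu-alg}), any purely polynomial element $f([\omega])\in\mbox{Ker}(\Phi)$ must have $f_{2,3}\mid f$; combined with the single membership $f_{2,3}([\omega])\in\mbox{Ker}(\Phi)$ and the fact (both taken from \cite{AdM-2009}) that the center of $A(\WW_{2,3})$ is the subalgebra generated by $[\omega]$, one gets center $\cong {\C}[x]/\la f_{2,3}\ra$, of dimension $20$, with no need to determine $\mbox{Ker}(\Phi)$ exactly or to know the precise $[H]^2$-relation. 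Your closing step (center lies in $A_0(\WW_{2,3})$ and is cut out by commutation with $[E],[F],[H]$) is in the right spirit, but it is the two issues above --- the degree miscount and the premature appeal to the full kernel computation --- that keep your proposal from being a proof of the stated corollary.
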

\begin{proof}
By using results from \cite{AdM-2009} one can easily see that the center of $A(\WW_{2,3})$ is isomorphic to the subalgebra generated by $[\omega]$.
By using Proposition \ref{ker-c0} we see that
$$ f \in {\C}[x], \ f([\omega]) \in \mbox{Ker} (\Phi) \quad \implies  f_{2,3} \vert f. $$
Since $f_{2,3} ([\omega]) \in \mbox{Ker}(\Phi)$, we prove the assertion.
\end{proof}

\begin{remark}
This result implies that $A(\WW_{2,3})$ has  $2$--dimensional indecomposable modules $U^{(2)} _{h}$ on which $[\omega]$ acts (in some basis) as
$$ \left(
   \begin{array}{cc}
     h & 1 \\
     0 & h \\
   \end{array}
 \right) $$
  where $h \in \{ 0, 1, 2, 1/8, 5/ 8, 1/3 \} $, and $3$--dimensional indecomposable module $U^{(3)} _{0}$ on which $[\omega]$ acts as
 $$
 \left(
   \begin{array}{ccc}
     0 & 1 & 0 \\
     0 & 0 & 1 \\
     0 & 0 & 0 \\
   \end{array}
 \right). $$
 Other zeros of the polynomial  $f_{2,3}$ do not give rise to indecomposable modules.

 By applying the theory of Zhu's algebras, we get the existence of   logarithmic, $\N$--graded   $\WW_{2,3}$--modules $R_h ^{(2)}$ and $R_0 ^{(3)}$ whose top components are isomorphic to $U_h ^{(2)}$ and $U_0 ^{(3)}$ respectively. These modules   appeared in the fusion rules analysis in \cite{GRW}.
\end{remark}
\vskip 5mm

By using representation theory of the vertex operator algebra $\WW_{2,3}$ we can conclude that
\bea && 0=[ F\circ E] =  g([\omega]) *  p([\omega]) * [H],  \label{rel-2-3} \eea
for certain polynomial $g$ of degree $2$.
We can determine polynomial $g$ by evaluating (\ref{rel-2-3}) on top components of $\overline{M(1)}$--modules.

By using direct calculation (and  Mathematica) we get:

$$ g(x) = \nu (\frac{62128128}{14003665} x ^2  - \frac{918683648}{14003665} x + \frac{5767168}{215441}) \quad (\nu \ne 0).$$
Moreover,  $g$ is relatively prime with $f_{2,3}$. Therefore, $\Phi(g([\omega])) $ is invertible in $A(\WW_{2,3})$ and we have that
in $p([\omega] ) * [H]  \in \mbox{Ker}(\Phi)$. This implies
$$\mbox{Ker}(\Phi) = A(\mip) . \{ p([\omega]) * [H], f_{2,3}([\omega]) \}. $$

Moreover, by simple analysis of Zhu's algebra we get that
$$ A(\WW_{2,3}) = A_{-1}( \WW_{2,3}) \oplus A_0( \WW_{2,3}) \oplus A_1( \WW_{2,3})$$
where
$$A_{0}( \WW_{2,3}) =  A(\mip) / \mbox{Ker}(\Phi) , \quad \dim A_{0}( \WW_{2,3}) =26,$$
$$A_{1}( \WW_{2,3})= A_{0}( \WW_{2,3}). [E], \quad \dim A_{1}( \WW_{2,3}) = 6, $$
$$A_{-1}( \WW_{2,3})= A_{0}( \WW_{2,3}). [F], \quad \dim A_{-1}( \WW_{2,3}) = 6. $$

More precisely we have that
$$ p([\omega]) * [E] = p([\omega]) * [F]=0,$$
$$ A_{1}( \WW_{2,3}) = \mbox{span}_{\C} \{ p_h([\omega]) * [E], h \in S_{2,3} ^{(2)} \}, $$
$$ A_{-1}( \WW_{2,3}) = \mbox{span}_{\C} \{ p_h([\omega]) * [F], h \in S_{2,3} ^{(2)} \}, $$
where
$$S_{2,3} ^{(2)} = \{ 5, 7, 10/3, 33/8, 21/8, 35/ 24 \}, \quad p(x)=(x-h)p_h(x). $$

This implies the following result:
\begin{proposition} We have
$$ \dim A( \WW_{2,3}) = 38. $$
\end{proposition}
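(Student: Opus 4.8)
The plan is to assemble the dimension count from the direct-sum decomposition $A(\WW_{2,3}) = A_{-1}(\WW_{2,3}) \oplus A_0(\WW_{2,3}) \oplus A_1(\WW_{2,3})$ stated just above, so the proposition reduces to verifying the three dimensions $\dim A_0(\WW_{2,3}) = 26$ and $\dim A_{\pm 1}(\WW_{2,3}) = 6$, after which $\dim A(\WW_{2,3}) = 26 + 6 + 6 = 38$. The horizontal $\Z$-grading together with the results from \cite{AdM-2009} guarantee that $A(\WW_{2,3})$ is $\Z$-graded with only three nonzero components, exactly as in the $\triplet$ case handled earlier, so the decomposition itself may be quoted.

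First I would compute $\dim A_0(\WW_{2,3})$. By construction $A_0(\WW_{2,3}) = A(\mip)/\mbox{Ker}(\Phi)$, and by the displayed computation the kernel is the ideal $A(\mip).\{p([\omega])*[H],\, f_{2,3}([\omega])\}$, using that $\Phi(g([\omega]))$ is invertible (since $g$ is relatively prime to $f_{2,3}$ and the center of $A(\WW_{2,3})$ is $\C[x]/\la f_{2,3}\ra$) to upgrade the inclusion of Proposition \ref{ker-c0} to an equality. Using Remark \ref{after-zhu-alg}, $A(\mip)$ is a free rank-two module over $\C[x]$ with basis $1, [H]$, so an arbitrary class modulo $\mbox{Ker}(\Phi)$ is represented as $A([\omega]) + B([\omega])*[H]$ with $A$ taken modulo $f_{2,3}$ (degree $20$) and $B$ taken modulo $p$ (degree $6$); one must also account for the relation $[H]^2 = C_p\prod_{i=1}^{2p-1}(x-h_{i,1})$ and check it does not cut the count further once $B$ is already reduced mod $p$. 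This gives $\dim A_0(\WW_{2,3}) = 20 + 6 = 26$.

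Next, $A_{\pm 1}(\WW_{2,3}) = A_0(\WW_{2,3}).[E]$ and $A_0(\WW_{2,3}).[F]$. From the relations $p([\omega])*[E] = p([\omega])*[F] = 0$ and the explicit spanning sets $A_1 = \mbox{span}_{\C}\{p_h([\omega])*[E] : h \in S_{2,3}^{(2)}\}$ (and similarly for $A_{-1}$), where $|S_{2,3}^{(2)}| = 6$, one reads off $\dim A_{\pm 1}(\WW_{2,3}) \le 6$. For the lower bound I would exhibit the $6$ one-dimensional and the family of indecomposable representations of $A(\WW_{2,3})$ coming from the zeros of $p$ (the modules $U_h^{(2)}$ and $U_0^{(3)}$ in the Remark) and check that the classes $p_h([\omega])*[E]$, $h \in S_{2,3}^{(2)}$, are detected as linearly independent on these, forcing equality. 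Summing the three components yields $\dim A(\WW_{2,3}) = 38$.

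The main obstacle is the lower bound, i.e. showing the spanning sets for the three graded pieces are actually bases rather than merely generating sets: the "internal" methods only readily produce upper bounds (as the abstract emphasizes), so one must produce enough genuine modules to separate all the proposed basis vectors. This is where the explicit list of indecomposables $U_h^{(2)}$, $U_0^{(3)}$ and the corresponding logarithmic $\WW_{2,3}$-modules is essential, and where, as in the $\triplet$ case, verifying that $\nu \ne 0$ in the relevant constant-term/representation-theoretic identities (here the computation of $g$ via the evaluation $0 = [F\circ E] = g([\omega])*p([\omega])*[H]$ on top components of $\mip$-modules) does the real work.
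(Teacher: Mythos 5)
Your outline reproduces the paper's route: the decomposition $A(\WW_{2,3})=A_{-1}(\WW_{2,3})\oplus A_0(\WW_{2,3})\oplus A_1(\WW_{2,3})$, the identification $A_0(\WW_{2,3})=A(\mip)/\mbox{Ker}(\Phi)$ with $\mbox{Ker}(\Phi)=A(\mip).\{p([\omega])*[H],f_{2,3}([\omega])\}$ (upgraded from Proposition \ref{ker-c0} via the invertibility of $\Phi(g([\omega]))$), the count $20+6=26$, and the spanning sets giving $\dim A_{\pm1}(\WW_{2,3})\le 6$. The genuine gap is in your lower bound for $\dim A_{\pm 1}(\WW_{2,3})$. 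The modules you propose as separating the six classes $p_h([\omega])*[E]$, $h\in S^{(2)}_{2,3}$, cannot do so: the indecomposables $U^{(2)}_h$ and $U^{(3)}_0$ are attached to the repeated zeros $h\in\{0,1,2,1/8,5/8,1/3\}$ of $f_{2,3}$ (they are not ``coming from the zeros of $p$''), so on them $[\omega]$ has no generalized eigenvalue in $S^{(2)}_{2,3}$; since $p([\omega])*[E]=0$, the element $p_h([\omega])*[E]$ is annihilated by $[\omega]-h$, which is invertible on any such module, so $p_h([\omega])*[E]$ acts as zero on every $U^{(2)}_h$ and on $U^{(3)}_0$. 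One-dimensional modules are also useless here, because $[E]^2\in A_2(\WW_{2,3})=0$ forces $[E]$ (and likewise $[F]$) to act nilpotently, hence by zero, on them. What actually detects $A_{\pm1}(\WW_{2,3})$ — and what the paper relies on through the representation theory of $\WW_{2,3}$ developed in \cite{AdM-2009}, \cite{AdM-2010} — are the six irreducible modules whose top components are two-dimensional with lowest conformal weights running over $S^{(2)}_{2,3}=\{5,7,10/3,33/8,21/8,35/24\}$: each yields a surjection of $A(\WW_{2,3})$ onto $M_2(\C)$, and $p_h([\omega])*[E]$ maps to a nonzero off-diagonal entry in the block labelled $h$ while vanishing in the other five blocks (because $p_h$ vanishes at every other element of $S^{(2)}_{2,3}$), which forces $\dim A_{\pm1}(\WW_{2,3})=6$.

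A smaller inaccuracy: in your computation of $\dim A_0(\WW_{2,3})=26$ you quote the relation $[H]^2=C_p\prod_{i=1}^{2p-1}(x-h_{i,1})$, which is the relation in the singlet sitting inside $\triplet$. The charge-zero subalgebra of $\WW_{2,3}$ is a different singlet algebra (generated by $\omega$ and $H=GF$ with $F=Qe^{-3\alpha}$, $\la\alpha,\alpha\ra=3$), and its Zhu algebra has its own relation $[H]^2=Q([\omega])$ coming from \cite{AdM-2009}. The consistency check you correctly flag — that reducing $A$ mod $f_{2,3}$ and $B$ mod $p$ loses nothing further, which amounts to $f_{2,3}$ dividing $p\cdot Q$ — has to be carried out with that polynomial $Q$, not with the $\triplet$ one.
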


\begin{remark}
It is interesting to notice that in this case we most likely have  $$ \dim A( \WW_{2,3}) < \dim {\mathcal P}( \WW_{2,3}). $$
\end{remark}

\begin{remark}   In \cite{AdM-2010}, among other things, we classified irreducible $\WW_{2,p}$--modules. This can be now used
to generalize results from this section for general $p$.
\end{remark}

\section{Zhu's algebra $A(\striplet)$ }
In this section we shall describe Zhu's algebra of the triplet vertex operator superalgebra $\striplet$ introduced in \cite{AdM-striplet}. In \cite{AdM-striplet} we classify irreducible $\striplet$--modules and proved that $\striplet$ is $C_2$--cofinite vertex operator superalgebra. There we also presented the conjecture that Zhu's algebra $A(\striplet)$ is $(6m +1)$--dimensional. In this section we shall prove this conjecture by using new methods from previous sections.

We should say that $\striplet$ can be considered as a super-analog of triplet vertex algebra $\triplet$, but $\striplet$ requires different techniques based on the representation theory of the  $N=1$ Neveu-Schwarz  Lie algebra.

We shall first recall definition of the triplet vertex superalgebra $\striplet$.

Let $V_L$ be the lattice vertex superalgebra associated to the lattice
$L= {\Z} \alpha$, with $\la \a , \a \ra = 2m+1$, $m \in \mathbb{N}$.

Let  $CL$ be the Clifford algebra, generated by
$\{ \phi  (n) , n \in  {\hf} + {\Z} \} \cup \{1\}$ and relations
$$ \{ \phi  ({n}) , \phi (m)\}=   \delta_{n,-m},
\quad n,m \in {\hf}+\mathbb{Z} .$$
Here $1$ is central.

Let $F$ be the $CL $--module generated by the vector ${\vak}$ such
that
$$ \phi ({n}) {\vak} = 0, \ n > 0.$$
 Then the field
$$Y( \phi ({-\hf}) {\bf 1} ,z) =\phi (z) = \sum_{ n \in {\hf} +
{\Z} } \phi ({ n }) z ^{-n- \hf}, $$
generates the unique vertex operator superalgebra structure on $F$.

 We define:
\bea &&\tau = \frac{1}{ \sqrt{2 m +1 }} \left( \alpha(-1) {\vak}
\otimes \phi (-\hf) {\vak} + 2 m {\vak} \otimes  \phi ({-\thf})
{\vak}
\right) , \nonumber  \\
&& G(z) = Y(\tau,z) = \sum_{n \in {\Z} } G(n+\hf) z ^{-n-2}, \nonumber \\
&& \omega = {\hf} G(-\hf) \tau, \quad  L(z) = Y(\omega,z) =\sum_{n \in {\Z} } L(n) z ^{-n-2}. \nonumber \eea

The components of the fields $L(z)$, $G(z)$ define on $V_L \otimes F$ a representation of the $N=1$ Neveu-Schwarz superalgebra $\NS$ with central charge
$c_{2m+1,1}=\frac{3}{2}(1-\frac{8m^2}{2m+1})$. Moreover, the operator
$$Q = \mbox{Res}_z Y( e ^{\alpha} \otimes \phi(-\tfrac{1}{2}) {\bf 1}, z)$$ is a screening operator which commutes with the action of the Neveu-Schwarz algebra.

 The $N=1$ vertex operator superalgebra $\striplet$ is defined to be a subalgebra
of $V_L \otimes F$ generated by superconformal vector
$\tau$ and
$$ F = e^{-\alpha}, \ H = Q e^{-\alpha}, \ E = Q ^2 e^{-\alpha},
$$
where
these three vectors are highest weight vectors for the
Neveu-Schwarz algebra $\NS$.

For $X \in \{E, F, H \}$, we define
$ \widehat{X} = G(-1/2) X. $

The representation theory of $\striplet$ was developed in \cite{AdM-striplet}. Recall that $\striplet$ is a simple, $C_2$--cofinite vertex superalgebra with $2m+1$ irreducible representations
$$ S \Lambda(1), \dots, S \Lambda(m+1), \ S\Pi(1), \cdots, S \Pi(m). $$

For $ i \in {\Z}$, we let  $$h ^{2 i +1,1} = \frac{(2m +1 - 2 i-1) ^2 - 4 m  ^2}{ 8 (2 m+1)} = \frac{(m - i) ^2 -  m  ^2}{ 2 ( 2 m+1)} .$$
 The
top component of  $S\Lambda(i+1)$ is $1$-dimensional and has conformal weight $h^{2i + 1,1}$,  and the top component of $S\Pi(i)$ is $2$--dimensional and has conformal weight
$h^ {2 (3m +1 -i) +1,1}$.

\vskip 5mm
The vertex subalgebra of $\striplet$ generated by $\tau$ and $H$ is called singlet vertex algebra and will be denoted by $\smip$.

\begin{theorem}  \cite{AdM-striplet} \label{zhu-alg-n1}
Zhu's algebra $A(\overline{SM(1)})$   is
    isomorphic to the
commutative algebra \\  ${\C}[ x, y] / \la P(x,y) \ra$,
where  $\la P(x,y) \ra $ is the principal ideal generated by
$$ P(x,y) = y ^{2} -C_m \prod_{i=0} ^{2m} (x- h^{2i+1,1})  $$
where
$C_m = \frac{2 ^{2m+1} (2m+1) ^{2m+1} }{(2m+1)!}. $
(Here
$x$ and $y$ correspond to $[\omega]$ and $[\widehat{H}]$.)
\end{theorem}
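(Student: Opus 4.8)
The final statement to prove is Theorem \ref{zhu-alg-n1}, the description of $A(\smip)$ as $\mathbb{C}[x,y]/\langle y^2 - C_m\prod_{i=0}^{2m}(x-h^{2i+1,1})\rangle$. This is cited to \cite{AdM-striplet}, so the plan is to reconstruct the argument rather than invent a new one, paralleling the structure of Theorem \ref{zhu-alg} for the bosonic singlet $\mip$.

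The plan is as follows. First I would establish that $A(\smip)$ is commutative and generated by the classes $x=[\omega]$ and $y=[\widehat H]$ (with $\widehat H = G(-1/2)H$). Commutativity follows since $\smip$ is a vertex superalgebra whose Zhu algebra is untwisted; the even part is generated under the Neveu-Schwarz action by $\omega$, and $[\omega]$ is central, while $\widehat H$ is the image of the lowest-weight even descendant of $H$. One shows using the Neveu-Schwarz structure from \cite{AdM-striplet} that every element of $\smip$ is, modulo $O(\smip)$, a polynomial in $[\omega]$ plus such a polynomial times $[\widehat H]$ — this is the direct analog of Remark \ref{after-zhu-alg}, and it gives a surjection $\mathbb{C}[x,y]\twoheadrightarrow A(\smip)$ with $y^2$ reducible to a polynomial in $x$. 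The key computation here is the $G(-1/2)$-descent: one needs the precise commutation relations between the modes of $\tau$ (equivalently $G$) and the vectors $Q^k e^{-k\alpha}$ inside $\smip$, showing that acting by a sufficiently negative mode produces a nonzero multiple of $Q^{k+1}e^{-(k+1)\alpha}$ plus $U(\NS)$-descendants of $Q^{k-1}e^{-(k-1)\alpha}$, so that in $A(\smip)$ all higher classes collapse to $x$-polynomial multiples of $1$ and $[\widehat H]$.

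Next I would identify the defining relation. The relation $y^2 = C_m\prod_{i=0}^{2m}(x-h^{2i+1,1})$ must hold because $A(\smip)$ acts on the top components of the $2m+1$ irreducible $\smip$-modules (the restrictions of the irreducible $\striplet$-modules), and on each of these $[\omega]$ acts by the scalar $h^{2i+1,1}$; matching the action of $[\widehat H]^2$ against the product over these eigenvalues pins down the polynomial up to the leading constant, and $C_m$ is computed from an explicit normalization (a residue/constant-term evaluation of $\widehat H * \widehat H$ in the lattice-Clifford realization). To get that this is the \emph{only} relation — i.e. that $\dim A(\smip)$ is as large as $\mathbb{C}[x,y]/\langle P\rangle$, which is $2(2m+1)$ — I would exhibit enough inequivalent one-dimensional and higher-rank representations (or use the classification of irreducible and indecomposable $A(\smip)$-modules from \cite{AdM-striplet}) to force the lower bound, so that the surjection from $\mathbb{C}[x,y]/\langle P\rangle$ is an isomorphism.

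The main obstacle I expect is the same one flagged in the abstract: obtaining the \emph{sharp} upper bound on $\dim A(\smip)$, i.e. proving there are no relations beyond $P(x,y)$. The surjectivity and the form of $P$ are comparatively mechanical, but ruling out extra collapses requires either a clean constant-term identity controlling the $G$-descent coefficients (to show the induction producing $[\widehat H]$-multiples never degenerates prematurely) or an independent representation-theoretic count; getting these two bounds to meet is the crux. I would handle it by first proving $\dim A(\smip)\le 2(2m+1)$ via the structural recursion above, then matching it with the $2(2m+1)$ dimensions coming from the irreducible modules and the $(p-1)$-type indecomposables, exactly as the bosonic case is handled, deferring the delicate normalization constant $C_m$ to an explicit lattice computation.
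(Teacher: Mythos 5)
The paper itself does not prove Theorem \ref{zhu-alg-n1}; it is quoted verbatim from \cite{AdM-striplet}, so your attempt has to be measured against the strategy of that reference (which parallels the bosonic singlet case of \cite{A-2003}). Your first half is on track: establishing that $A(\smip)$ is commutative and generated by $[\omega]$ and $[\widehat H]$ via the Neveu--Schwarz descent of the singular vectors $Q^n e^{-n\alpha}$, and producing a relation expressing $[\widehat H]*[\widehat H]$ as a polynomial in $[\omega]$, is exactly the right skeleton.

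The genuine gap is in your second half. You assert that $\mathbb{C}[x,y]/\langle P(x,y)\rangle$ has dimension $2(2m+1)$ and propose to prove the isomorphism by matching this against the top components of the $2m+1$ irreducible $\striplet$-modules. But $\mathbb{C}[x,y]/\langle y^2 - C_m\prod_{i=0}^{2m}(x-h^{2i+1,1})\rangle$ is \emph{infinite}-dimensional (a basis is $\{x^n, x^n y : n\ge 0\}$); this is consistent with the fact that the singlet $\smip$, unlike $\striplet$, is not $C_2$-cofinite and has infinitely many inequivalent irreducible modules. Consequently, evaluating on finitely many modules can never show that $\langle P\rangle$ is the \emph{entire} kernel of $\mathbb{C}[x,y]\twoheadrightarrow A(\smip)$: finitely many one-dimensional evaluations only constrain the kernel to lie in a finite intersection of maximal ideals, which is much larger than $\langle P\rangle$. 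The argument actually needed (and used in \cite{AdM-striplet}, and implicitly later in this paper, e.g. in the proof of the lemma computing $[E\circ F]$ in $A(\smip)$) is evaluation on the \emph{one-parameter family} of Fock modules $M(1,\lambda)\otimes F$ restricted to $\smip$: on their top components $[\omega]$ acts by $\tfrac{t(t-2m)}{2(2m+1)}$ and $[\widehat H]$ by a nonzero multiple of $\binom{t}{2m+1}$, for arbitrary $t\in\mathbb{C}$. These give a Zariski-dense set of points on the curve $y^2=C_m\prod_i(x-h^{2i+1,1})$, and since that curve is irreducible, any polynomial vanishing on all of them is divisible by $P$; this, together with your surjectivity step, is what forces the kernel to be exactly $\langle P\rangle$ and also pins down $C_m$ without a separate normalization computation. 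As written, your plan would at best bound the image of the finite-dimensional quotient relevant to $\striplet$, not determine $A(\smip)$ itself.
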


 Recall that as a module over Neveu-Schwarz  algebra $\striplet$ is generated by singular vectors
 $$ \{ Q ^j  e ^{-n \a}, \ n \in {\N}, 0 \le j \le 2n \}, $$
 and $\smip$ is generated by singular vectors
  $$ \{ Q ^n  e ^{-n \a}, \ n \in {\N} \}. $$

For every $ \ell \in {\N}$, we define
$$ \striplet_{-\ell} = \smip . e ^{-\ell \a} = \bigoplus_{n = 0} ^{\infty} U(\NS) . Q ^n e ^{-(n+\ell) \a },  $$
$$ \striplet_{\ell} = \smip . Q ^{ 2 \ell}  e ^{-\ell \a} = \bigoplus_{n = 0} ^{\infty} U(\NS) . Q ^{n + 2 \ell} e ^{-(n+\ell) \a}  . $$
\begin{proposition}
For every $\ell \in {\Z}$, $\striplet_{\ell}$ is an irreducible $\smip$--module and
$$ \striplet = \bigoplus_{\ell \in {\Z}} \striplet _{\ell}.$$
Moreover, for $v \in \striplet _{\ell_1}$, $w \in \striplet_{\ell_2}$, we have
$$Y(v,z ) w \in \striplet_{\ell_1 + \ell_2} ((z)).$$
Operator $G= Q$ satisfies conditions (\ref{uv-1})-(\ref{uv-5}).
\end{proposition}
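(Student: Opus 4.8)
The plan is to deduce all assertions from the structure theory of $\striplet$, $\smip$, and the screening operator $Q$ developed in \cite{AdM-striplet}, in exact analogy with the corresponding statement for $\triplet$ above (whose proof was a citation to \cite{AdM-2007} and \cite{AdM-triplet}). The $\Z$--grading on $\striplet$ is the restriction of the $\alpha$--charge grading on $V_L \otimes F$: the vector $e^{\lambda \alpha}$ has charge $\lambda$, while the Heisenberg modes $\alpha(n)$ and the fermion modes $\phi(n)$ have charge $0$, so $\tau$ and $\omega$ have charge $0$ and $Q = e^{\alpha}_0$ has charge $+1$. Consequently each generating singular vector $Q^{j} e^{-n\alpha}$ (with $0 \le j \le 2n$) is homogeneous of charge $j-n$, and as $n$ runs over $\N$ this charge takes every value in $\Z$. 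Grouping these singular vectors by charge, and using that $\striplet$ is generated over $\NS$ by them, identifies the charge--$\ell$ subspace of $\striplet$ as $\striplet_\ell$ as defined and gives $\striplet = \bigoplus_{\ell \in \Z} \striplet_\ell$. The property $Y(v,z)w \in \striplet_{\ell_1 + \ell_2}((z))$ is then immediate from additivity of the $\alpha$--charge under the vertex operators of $V_L \otimes F$, together with $\striplet$ being a vertex subalgebra.

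For the irreducibility of $\striplet_\ell$ as an $\smip$--module I would invoke the classification of irreducible $\smip$--modules and the identification of the $\smip$--submodules of $\striplet$ from \cite{AdM-striplet}: the presentations $\striplet_{-\ell} = \smip . e^{-\ell\alpha}$ and $\striplet_\ell = \smip . Q^{2\ell} e^{-\ell\alpha}$ exhibit each $\striplet_\ell$ as a cyclic $\smip$--module, and matching its bottom component with the classification shows it is one of the irreducible $\smip$--modules.

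It remains to verify conditions (\ref{uv-1})--(\ref{uv-5}) for $G = Q$. Condition (\ref{uv-1}) is automatic: for any vector $a$ the zero mode satisfies $[a_0, Y(u,z)] = Y(a_0 u, z)$ and $a_0 {\bf 1} = 0$, so $Q = e^{\alpha}_0$ is a derivation of $\striplet$. Condition (\ref{uv-2}), $Q(\striplet_\ell) \subset \striplet_{\ell+1}$, is the charge count above. For (\ref{uv-3}), $\omega$ has charge $0$ so $\omega \in \striplet_0$, and $[Q, Y(\omega,z)] = 0$ is exactly the statement proved in \cite{AdM-striplet} that $Q$ commutes with the Neveu--Schwarz algebra (the background--charge term built into $\tau$ and $\omega$ makes $e^{\alpha} \otimes \phi(-\tfrac{1}{2}){\bf 1}$ a weight--one field whose zero mode commutes with all of $\NS$). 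The substantive points are (\ref{uv-4}) and (\ref{uv-5}): on singular vectors $Q$ sends $Q^{j} e^{-n\alpha}$ to $Q^{j+1} e^{-n\alpha}$, which is again a generating singular vector of the component of charge one higher, the only exception being $j = 2n$, where $Q^{2n+1} e^{-n\alpha} = 0$ and this vector was in any case not a generator of the target; since $Q$ is $\NS$--linear, it follows that the image of $Q\colon \striplet_\ell \to \striplet_{\ell+1}$ is an $\NS$--submodule containing all $\NS$--generators of $\striplet_{\ell+1}$, hence $Q$ is surjective for $\ell \ge 0$. For $\ell < 0$ one must instead show that $Q$ has trivial kernel on $\striplet_\ell$: the generating $\NS$--highest--weight vectors of $\striplet_\ell$ go to nonzero $\NS$--highest--weight vectors of $\striplet_{\ell+1}$ of the same conformal weight, but ruling out a kernel inside the individual $\NS$--submodules is where one genuinely needs the detailed module structure of the charged components (equivalently, the computation of the kernel of the screening) from \cite{AdM-striplet}. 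I expect this last point to be the main obstacle if one argues from scratch; granting it, the proposition follows as for $\triplet$.
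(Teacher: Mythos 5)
Your proposal is correct and follows essentially the same route as the paper: the paper states this proposition without a separate argument, relying (as in its triplet analogue, whose proof is a citation to \cite{AdM-2007} and \cite{AdM-triplet}) on the structural results of \cite{AdM-striplet} — generation by the singular vectors $Q^j e^{-n\a}$, the charge grading, irreducibility of the charge components as $\smip$--modules, and the screening property of $Q$. Your write-up simply makes the routine charge and derivation checks explicit and correctly identifies the one genuinely nontrivial input (irreducibility and injectivity of $Q$ on the negative-charge components) as coming from the cited reference, which is exactly where the paper places it.
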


The proof of the following two lemmas is completely analogous as in the case of triplet vertex algebra $\triplet$.
\begin{lemma}
 We have
 $$O(\striplet)_0 \subset \bigoplus_{n=1} ^{\infty} U(\NS). Q^n e ^{-n\a}  + O(\smip). $$
 \end{lemma}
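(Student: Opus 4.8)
The plan is to transcribe, essentially word for word, the argument used earlier for the triplet algebra $\triplet$: that proof reduced the corresponding inclusion to Lemma \ref{lema-0} together with an explicit identification of the image of the screening operator on the charge $-1$ piece.

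First I would quote the Proposition immediately preceding this lemma, which asserts that $V = \striplet$, its charge-zero subalgebra $V_0 = \smip$, and $G = Q$ satisfy all of the hypotheses (\ref{uv-1})--(\ref{uv-5}) of Section \ref{hom-zhu}; in particular $Q$ is a derivation mapping $\striplet_\ell$ into $\striplet_{\ell+1}$, it commutes with $Y(\omega,z)$ (being a derivation that commutes with the whole Neveu--Schwarz action), and it is injective, resp.\ surjective, on the negatively, resp.\ non-negatively, graded parts. Granting this, Lemma \ref{lema-0} applies verbatim and yields
$$O(\striplet)_0 \subset Q(\striplet_{-1}) + O(\smip).$$

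Second, I would compute $Q(\striplet_{-1})$. Since $\striplet_{-1} = \smip . e^{-\a} = \bigoplus_{n=0}^{\infty} U(\NS). Q^n e^{-(n+1)\a}$ and $Q$ commutes with the action of $\NS$, one has $Q\bigl(U(\NS). Q^n e^{-(n+1)\a}\bigr) = U(\NS). Q^{n+1} e^{-(n+1)\a}$; letting $n$ range over $\N$ and reindexing $k = n+1$ gives $Q(\striplet_{-1}) = \bigoplus_{n=1}^{\infty} U(\NS). Q^n e^{-n\a}$. Substituting this into the inclusion above finishes the proof.

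There is no real obstacle here: all the content sits in Lemma \ref{lema-0} and in the already-verified fact that $Q$ obeys the axioms of Section \ref{hom-zhu}. The one point that deserves a moment of care is the commutation of $Q$ with $U(\NS)$ --- it is what makes both $[Q, Y(\omega,z)] = 0$ and the identification of $Q(\striplet_{-1})$ work --- but this is exactly the property of the screening operator recorded in \cite{AdM-striplet}, so the lemma is a mechanical adaptation of the $\triplet$ case with $Vir$ replaced by $\NS$ everywhere.
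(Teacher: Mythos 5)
Your proposal is correct and follows essentially the same route as the paper: the paper declares this lemma ``completely analogous'' to the triplet case, whose proof consists precisely of your two steps, namely identifying $Q(\striplet_{-1})$ as $\bigoplus_{n=1}^{\infty} U(\NS).\,Q^{n}e^{-n\a}$ (using that the screening operator $Q$ commutes with the Neveu--Schwarz action) and then invoking Lemma \ref{lema-0}, whose hypotheses (\ref{uv-1})--(\ref{uv-5}) are guaranteed by the preceding Proposition. No gap; your extra remark about the commutation of $Q$ with $U(\NS)$ is exactly the property recorded in the paper.
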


Define now two polynomials
$$\ell(x) = \prod_{i=2m +1 } ^{3m } (x- h^{2 i+1,1}), \quad Sf_m(x) = \prod_{i=0} ^{3 m} (x- h^{2 i +1 ,1}).$$

\begin{lemma} \label{lema-2-n1}
Assume that  $n \ge 2$. Then in Zhu's algebra $A(\smip)$ we have
$$[ g \  Q ^n e ^{-n \a}]   = A([\omega]) * [\widehat{H}]  + B([\omega]) * Sf_m ([\omega]), $$
where
   $A, B \in {\C}[x]$, $ g \in U (\NS)$.
\end{lemma}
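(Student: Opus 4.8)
The statement is the super-analog of Lemma \ref{lema-2}, so the plan is to follow that argument almost verbatim, replacing the Virasoro algebra by the Neveu-Schwarz algebra $\NS$ and $[H]$ by $[\widehat H]$. The starting point is the structural fact, established in \cite{AdM-striplet} together with the earlier singlet/supersinglet analysis, that the fields attached to $\widehat H = G(-1/2)H$ act on the singular vectors $Q^k e^{-k\a}$ in a two-step fashion: writing $Y(\widehat H,z)=\sum_{j\in\Z}\widehat H_j z^{-j-1}$, one has
$$\widehat H_j\, Q^k e^{-k\a}\in U(\NS)\,Q^{k+1}e^{-(k+1)\a}\ \oplus\ U(\NS)\,Q^{k-1}e^{-(k-1)\a},\qquad k\ge 1,$$
and, crucially, that there is some mode index $j_0$ (sufficiently negative) for which the $Q^{k+1}e^{-(k+1)\a}$-component is a nonzero scalar multiple of $Q^{k+1}e^{-(k+1)\a}$ while the $Q^{k-1}e^{-(k-1)\a}$-component is $f(\omega)\,Q^{k-1}e^{-(k-1)\a}$ for some $f(\omega)\in U(\NS)$. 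I would extract these two facts from \cite{AdM-striplet} exactly as the triplet case extracted the analogous statements from \cite{A-2003}, \cite{AdM-2007}.

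Given this, the argument proceeds by the same downward induction. Since in $A(\smip)$ every element of $O(\smip)$ dies and the $\NS$-action on $\smip$ is generated from $[\widehat H]$ over $\C[[\omega]]$, the identity coming from the mode $j_0$ above translates, in $A(\smip)$, into a recursion of the form
$$[Q^{k+1}e^{-(k+1)\a}]=\overline f([\omega])\ast[Q^{k-1}e^{-(k-1)\a}],\qquad \overline f\in\C[x].$$
Iterating this recursion starting from $[Q^n e^{-n\a}]$ with $n\ge 2$ pushes the index down in steps of two, terminating at either $[Q^1 e^{-\a}]=[\widehat H]$ (odd $n$, after absorbing the $G(-1/2)$ that distinguishes $H$ from $\widehat H$ — here one uses that $H$ and $\widehat H$ generate the same $\NS$-submodule) or at $[Q^2 e^{-2\a}]$ (even $n$). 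The base case $[Q^2 e^{-2\a}]$ is then handled by invoking the explicit computation in \cite{AdM-striplet}, which identifies $[Q^2 e^{-2\a}]$ with a nonzero multiple of $Sf_m([\omega])$, just as \cite{AdM-triplet} gave $[Q^2 e^{-2\a}]=D f_p([\omega])$ in the triplet case. Collecting coefficients yields the claimed form $[g\,Q^n e^{-n\a}]=A([\omega])\ast[\widehat H]+B([\omega])\ast Sf_m([\omega])$, with the extra factor $g\in U(\NS)$ absorbed harmlessly since multiplication by $g$ only modifies the polynomials $A,B$ (using that $A(\smip)$ is generated over $\C[[\omega]]$ by $1$ and $[\widehat H]$, and that $[\widehat H]^2$ is itself a polynomial in $[\omega]$ by Theorem \ref{zhu-alg-n1}).

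The main obstacle is not the induction, which is formal, but verifying that the two-step action property and the nonvanishing of the relevant structure constant genuinely hold in the $N=1$ setting — i.e. that the Neveu-Schwarz representation theory of $\smip$ and $\striplet$ developed in \cite{AdM-striplet} supplies exactly the analog of the $U(Vir)$-statements used in Lemma \ref{lema-2}. In particular one must check that the odd generator $G(-1/2)$ does not obstruct the reduction: passing between $H$ and $\widehat H = G(-1/2)H$ and between the singular vectors at half-integral versus integral conformal weight introduces parity bookkeeping that has no counterpart in the purely even triplet case. Once one is confident that \cite{AdM-striplet} provides the required module-theoretic input (which the paper asserts it does, and which is why it says the proof is "completely analogous"), the rest is a line-by-line transcription of the proof of Lemma \ref{lema-2}, and the base-case value $[Q^2 e^{-2\a}]\sim Sf_m([\omega])$ is read off from loc. cit.
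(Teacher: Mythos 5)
Your proposal is correct and follows essentially the same route as the paper: the paper gives no separate argument for Lemma \ref{lema-2-n1}, stating only that it is completely analogous to Lemma \ref{lema-2}, and your line-by-line transcription (NS algebra in place of Virasoro, $[\widehat{H}]$ in place of $[H]$, the two-step action of the modes of $\widehat{H}$ with a nonvanishing structure constant from \cite{AdM-striplet}, reduction in steps of two to $[\widehat{H}]$ or $[Q^2 e^{-2\a}]\sim Sf_m([\omega])$, plus the parity bookkeeping for $g\in U(\NS)$) is exactly that intended analogy.
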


Now we shall consider the homomorphism
$$ \Phi : A(\smip) \rightarrow A(\striplet) $$ from Section \ref{hom-zhu}.

We need the following result:

\begin{lemma}
Inside $A(\smip)$ we have
$$ [E \circ F] = D_m \ell([\omega]) * [\widehat{H}] = 0, \quad \ D_m \ne 0. $$
\end{lemma}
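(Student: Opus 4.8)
The plan is to run exactly the same argument that produced Lemma~\ref{lema-2} and Theorem~\ref{jezgra} in the triplet case, now inside the $N=1$ setting. First I would compute $[E \circ F]$ modulo $O(\smip)$ by locating it in the charge-zero decomposition. Since $E, F \in \striplet_1, \striplet_{-1}$ respectively (charge $\pm 1$), the product $E \circ F = \mbox{Res}_x Y(E,x)\frac{(1+x)^{\deg E}}{x^2}F$ lies in $\striplet_0$, and by the lemma above (the $N=1$ analogue stating $O(\striplet)_0 \subset \bigoplus_{n \ge 1} U(\NS).Q^n e^{-n\a} + O(\smip)$) its class in $A(\smip)$ is a combination of classes $[g\, Q^n e^{-n\a}]$ with $g \in U(\NS)$. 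Applying Lemma~\ref{lema-2-n1} then immediately writes $[E\circ F] = A([\omega])*[\widehat H] + B([\omega])*Sf_m([\omega])$ for some polynomials $A,B$. The second vanishing statement, $[E\circ F]=0$, is not a computation but a structural fact: $E \circ F \in O(\striplet)$ by definition, hence its image under $\Phi$ is zero in $A(\striplet)$, and since (as in the triplet case, using $G|_{V_{-1}}$ injective) one checks $[E\circ F]$ actually lies in $\mbox{Ker}(\Phi)$ viewed inside $A(\smip)$; combined with the fact that it is not a generic element of the kernel one forces $B=0$ and $A$ divisible by $\ell$.

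More precisely, the step that really determines the answer is an ideal-membership squeeze identical in spirit to the proof of Theorem~\ref{jezgra}. I would first show $[E\circ F] \in \{A([\omega])*[\widehat H] + B([\omega])*Sf_m([\omega])\}$, then multiply by $[\widehat H]$: since $[\widehat H]^2$ equals (a nonzero multiple of) $C_m \prod_{i=0}^{2m}(x-h^{2i+1,1})$ evaluated at $[\omega]$ by Theorem~\ref{zhu-alg-n1}, and since $Sf_m = \ell \cdot \prod_{i=0}^{2m}(x-h^{2i+1,1})$ (the factorization defining $\ell$ and $Sf_m$), the relation $[\widehat H]*[E\circ F] \in \mbox{Ker}(\Phi)$ together with algebraic independence of $1$ and $[\widehat H]$ over $\C[x]$ (the $N=1$ analogue of Remark~\ref{after-zhu-alg}) forces $\ell(x) \mid A(x)$ and $B$ to be absorbed. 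This pins down $[E\circ F] = D_m\, \ell([\omega])*[\widehat H]$ up to scalar, and the constant $D_m$ is nonzero because the leading-order term of $E\circ F$ in the screening-operator filtration matches that of $\ell([\omega])*[\widehat H]$ — precisely the role of the constant-term identities and the nonvanishing constants $C\ne 0$, $f(\omega)$ appearing in the structural description of $H_{j_0}Q^k e^{-k\a}$ carried over to the Neveu-Schwarz setting.

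The main obstacle is the nonvanishing of $D_m$, i.e.\ verifying that the coefficient $A$ in $[E\circ F]=A([\omega])*[\widehat H]$ really is $\ell(x)$ up to a nonzero scalar and not something with extra or fewer roots. In the triplet case this rested on explicit formulas from \cite{AdM-triplet}; here one must redo this with the $N=1$ minimal-model singular-vector data, tracking how $Q$ shifts $G(-1/2)$ and how the Clifford-algebra factor $\phi(-1/2)$ interacts with the screening. Concretely I expect this to reduce to a constant-term (hypergeometric-type) identity controlling $\mbox{Res}_x Y(E,x)\frac{(1+x)^{\deg E}}{x^2}F$ after projecting onto the singlet; establishing that the resulting polynomial in $[\omega]$ has exactly the roots $h^{2i+1,1}$ for $2m+1 \le i \le 3m$ — no more, no less — is the delicate point, and is presumably where \cite{AdM-striplet} or a Mathematica-assisted evaluation on top components of $\smip$-modules is invoked, exactly as in the $\WW_{2,3}$ discussion above.
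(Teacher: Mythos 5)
Your structural route cannot actually pin down the statement, and the step where you claim it does is the gap. First, membership of $[E\circ F]$ in $\mbox{Ker}(\Phi)$ can never force the $B([\omega])*Sf_m([\omega])$ component to vanish: $Sf_m([\omega])$ itself lies in $\mbox{Ker}(\Phi)$ (it is part of the kernel in Theorem \ref{jezgra-n1}), so "it is not a generic element of the kernel" is not an argument and no amount of ideal-membership squeezing can conclude $B=0$. Second, the multiplication-by-$[\widehat{H}]$ squeeze (the analogue of the proof of Theorem \ref{jezgra}) only yields the divisibility $\ell(x)\mid A(x)$ for the coefficient of $[\widehat{H}]$; it says nothing about the degree of $A$ and, crucially, nothing about $A\neq 0$. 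But the entire content of the lemma is precisely that $A=D_m\ell$ with $D_m\neq 0$ and no extra $g([\omega])$ term, because that is what lets one divide out and conclude $\ell([\omega])*[\widehat{H}]\in\mbox{Ker}(\Phi)$, which in turn feeds the opposite inclusion of Theorem \ref{jezgra-n1}. You defer exactly this point ("presumably a constant-term identity or a Mathematica-assisted evaluation"), so the proposal does not prove the lemma; it proves at most a weaker divisibility statement that is useless for the application.

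For comparison, the paper's proof is entirely computational and bypasses the kernel structure: since $A(\smip)\cong\C[x,y]/\la P(x,y)\ra$ (Theorem \ref{zhu-alg-n1}), one writes $[E\circ F]=f([\omega])*[\widehat{H}]+g([\omega])$ with $f,g\in\C[x]$ arbitrary, then evaluates $o(E\circ F)$ on the highest weight vectors of a one-parameter family of $A(\smip)$-modules. The constant-term identity of Theorem \ref{identity-striplet} gives $-{2m \choose m}^2{t+m \choose 4m+1}=f\bigl(\tfrac{t(t-2m)}{2(2m+1)}\bigr){t \choose 2m+1}+g\bigl(\tfrac{t(t-2m)}{2(2m+1)}\bigr)$ for all $t$, and comparing these polynomials in $t$ forces $g=0$ and $f=D_m\ell$ with $D_m\neq 0$. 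So the residue/constant-term evaluation you relegate to a footnote is not an optional verification of nonvanishing; it is the proof, and your structural preliminaries (charge-zero location, Lemma \ref{lema-2-n1}) are not needed once one works directly in $\C[x,y]/\la P(x,y)\ra$.
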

\begin{proof}
We consider $[E\circ F]$ as an element of $A(\smip)$. Therefore
$$
 [E \circ F] = f ([\omega]) * [\widehat{H}] + g([\omega]) $$
 for certain polynomials $f, g \in {\C}[x]$. Then we shall evaluate both sides of this equality on a family of  $A(\smip)$--modules.
 Then  Theorem  \ref{identity-striplet} proven below (see also Proposition 8.1 and  Appendix  from  \cite{AdM-striplet}) gives
 $$- { 2m \choose m} ^2 { t + m \choose 4 m +1} = f\left( \frac{t (t-2m) }{2 (2m+1)} \right) { t \choose 2m+1} + g \left( \frac{t (t-2m) }{2 (2m+1)} \right) $$
 for arbitrary $t \in {\C}$. This easily gives that $g(x)=0$ and
 $f(x) = D_m \ell(x)$ for certain non-vanishing constant $D_m$.
\end{proof}

The previous result shows that
$$ \ell([\omega]) * [\widehat{H}] \in \mbox{Ker}(\Phi). $$
The proof of the following theorem is now   completely analogous to that of Theorem \ref{jezgra}.

\begin{theorem} \label{jezgra-n1} We have
\bea \mbox{Ker} ( \Phi) &=&A(\smip) .( \ell ([\omega]) * [ \widehat{H}] ) \nonumber \\ &\cong&  \mbox{span} _{\C} \{ A([\omega]) *\ell ([\omega])* [\widehat{H}] + B([\omega]) * Sf_m([\omega]), \quad A, B \in {\C}[x] \}. \nonumber \eea
\end{theorem}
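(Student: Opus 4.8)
The plan is to follow the template already established in the proof of Theorem~\ref{jezgra}, replacing the triplet ingredients by their $N=1$ counterparts. First I would invoke Proposition~\ref{prop-1} together with Lemma~\ref{lema-2-n1}: since $G=Q$ satisfies (\ref{uv-1})--(\ref{uv-5}), we have $\mbox{Ker}(\Phi)\subset [Q(\striplet_{-1})]$, and $Q(\striplet_{-1})$ is spanned by elements of the form $g\,Q^n e^{-n\a}$ with $g\in U(\NS)$ and $n\ge 1$; applying Lemma~\ref{lema-2-n1} (handling the $n=1$ case, i.e. the generator $[\widehat{H}]$ itself, separately) gives the first inclusion
$$\mbox{Ker}(\Phi)\subset\{A([\omega])*[\widehat{H}]+B([\omega])*Sf_m([\omega]):A,B\in{\C}[x]\}.$$

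Next I would sharpen this inclusion exactly as in Theorem~\ref{jezgra}. Take $u=A([\omega])*[\widehat{H}]+B([\omega])*Sf_m([\omega])\in\mbox{Ker}(\Phi)$. Since $\mbox{Ker}(\Phi)$ is an ideal of $A(\smip)$, the element $[\widehat{H}]*u$ also lies in the kernel. Using Theorem~\ref{zhu-alg-n1} to rewrite $[\widehat{H}]^2$ (up to a nonzero constant) as a polynomial $G_m([\omega])$ in $[\omega]$ — concretely $G_m(x)=C_m\prod_{i=0}^{2m}(x-h^{2i+1,1})$ — and comparing the two expressions for $[\widehat{H}]*u$ via the algebraic independence of $1$ and $[\widehat{H}]$ over ${\C}[x]$ (the super-analogue of Remark~\ref{after-zhu-alg}, available from Theorem~\ref{zhu-alg-n1}), I would deduce a divisibility relation forcing $\ell(x)\mid A(x)$, since $Sf_m(x)=\ell(x)G_m(x)$ (up to constant) and $\gcd(\ell,G_m)=1$ because the $h^{2i+1,1}$ for $0\le i\le 2m$ and for $2m+1\le i\le 3m$ are disjoint. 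This yields
$$\mbox{Ker}(\Phi)\subseteq\{A_1([\omega])*\ell([\omega])*[\widehat{H}]+B_1([\omega])*Sf_m([\omega]):A_1,B_1\in{\C}[x]\}.$$

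For the reverse inclusion I would use the preceding lemma, which already establishes $\ell([\omega])*[\widehat{H}]\in\mbox{Ker}(\Phi)$ via the constant term identity of Theorem~\ref{identity-striplet}; it remains to check $Sf_m([\omega])=\ell([\omega])*G_m([\omega])\in\mbox{Ker}(\Phi)$, which is immediate since $\mbox{Ker}(\Phi)$ is an ideal containing $\ell([\omega])*[\widehat{H}]$ and hence (multiplying by $[\widehat{H}]$ and using $[\widehat{H}]^2\propto G_m([\omega])$) also contains $\ell([\omega])*G_m([\omega])$. Putting the two inclusions together, and absorbing the constant $D_m\ne 0$, gives $\mbox{Ker}(\Phi)=A(\smip).(\ell([\omega])*[\widehat{H}])$ together with the claimed span description.

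The main obstacle I anticipate is the genuine verification that the $N=1$ analogue of Remark~\ref{after-zhu-alg} holds in the form needed — that is, that $1$ and $[\widehat{H}]$ (rather than $[H]$) are algebraically independent over the subalgebra generated by $[\omega]$ in $A(\smip)$, and that the structure constants relating $g\,Q^n e^{-n\a}$ down to $[\widehat{H}]$ and $Sf_m([\omega])$ in Lemma~\ref{lema-2-n1} really are as stated; these rest on the Neveu--Schwarz module computations of \cite{AdM-striplet} and are the place where the argument is not purely formal. Everything else is a transcription of the triplet proof, so once those structural inputs are granted the theorem follows verbatim, which is why the excerpt can legitimately say the proof is "completely analogous to that of Theorem~\ref{jezgra}."
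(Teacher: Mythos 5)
Your proposal is correct and follows exactly the route the paper intends: the paper declares the proof "completely analogous to that of Theorem \ref{jezgra}", and your argument is precisely that analogy carried out — the first inclusion from Proposition \ref{prop-1} and Lemma \ref{lema-2-n1}, the ideal trick with $[\widehat{H}]*u$ and the algebraic independence of $1$ and $[\widehat{H}]$ over $\mathbb{C}[x]$ (which, as you note, is immediate from Theorem \ref{zhu-alg-n1}), giving $\ell(x)\mid A(x)$, and the reverse inclusion from $\ell([\omega])*[\widehat{H}]\in\mbox{Ker}(\Phi)$ established by the preceding lemma via Theorem \ref{identity-striplet}. Your extra remarks (deducing $Sf_m([\omega])\in\mbox{Ker}(\Phi)$ by multiplying by $[\widehat{H}]$, and the coprimality of $\ell$ and the Zhu polynomial) are harmless minor variants of details the paper leaves implicit, so there is nothing to correct.
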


\vskip 5mm

Let $$A_0 (\striplet) = \mbox{Im} (\Phi) \cong A(\smip) / \mbox{Ker} ( \Phi). $$

By combining the results from Section 11 of  \cite{AdM-striplet} and the above results we obtain the description of Zhu's algebra $A(\striplet)$. We get
$$ A(\striplet) = A_{-1} (\striplet) \oplus A_0 (\striplet) \oplus A_1 (\striplet),$$
where $$ A_{-1} (\striplet) = A_0 (\striplet) . [\widehat{F}], \quad   A_{1} (\striplet) = A_0 (\striplet) . [\widehat{E}], $$
and
$$ \dim A_{\pm 1} (\striplet) =m, \quad \dim A_0( \striplet) \le 4m +1 .$$
More precisely,
we have
$$ \ell ([\omega]) * [\widehat{E}] = \ell ([\omega]) * [\widehat{F}]=0,$$
$$ A_{1} (\striplet) = \mbox{span}_{\C} \{ \ell_i ([\omega]) * [\widehat{E}], \ \ 2m+1 \le i \le 3m+1 \},$$ $$A_{-1} (\striplet) = \mbox{span}_{\C} \{ \ell_i ([\omega]) * [\widehat{F}], \ \ 2m+1 \le i \le 3m+1 \},$$
where $\ell(x) =\ell_i(x)(x-h ^{2i+1,1})$.
Now Theorem \ref{jezgra} implies  that $\dim A_0 (\triplet) = 4 m+1. $
Therefore $\dim A(\striplet) = 6 m +1$. In this way we have proved the following theorem.

\begin{theorem} We have
$$ \dim A(\striplet) = 6 m +1. $$
\end{theorem}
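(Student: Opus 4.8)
The plan is to assemble the dimension count for $A(\striplet)$ by mimicking, step by step, the argument that gave $\dim A(\triplet) = 6p-1$ in Theorem~\ref{dim-zhu-triplet}. The decomposition $A(\striplet) = A_{-1}(\striplet) \oplus A_0(\striplet) \oplus A_1(\striplet)$ is already supplied by the results from Section~11 of \cite{AdM-striplet}, together with the upper bounds $\dim A_{\pm 1}(\striplet) = m$ and $\dim A_0(\striplet) \le 4m+1$, and the explicit spanning sets $A_{\pm 1}(\striplet) = \mbox{span}_{\C}\{\ell_i([\omega]) * [\widehat{E}] \ (\text{resp. } [\widehat{F}]), \ 2m+1 \le i \le 3m+1\}$. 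So the entire burden reduces to showing $\dim A_0(\striplet) = 4m+1$, since then $\dim A(\striplet) = m + (4m+1) + m = 6m+1$.

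For the central piece, I would invoke Theorem~\ref{jezgra-n1}, which identifies $\mbox{Ker}(\Phi)$ as the principal ideal $A(\smip).(\ell([\omega]) * [\widehat{H}])$, isomorphic to $\mbox{span}_{\C}\{A([\omega]) * \ell([\omega]) * [\widehat{H}] + B([\omega]) * Sf_m([\omega]) : A, B \in \C[x]\}$. Since $A_0(\striplet) \cong A(\smip)/\mbox{Ker}(\Phi)$, I need to compute the codimension of this ideal. Using Theorem~\ref{zhu-alg-n1}, $A(\smip) \cong \C[x,y]/\langle y^2 - C_m \prod_{i=0}^{2m}(x - h^{2i+1,1})\rangle$, so a basis of $A(\smip)$ is $\{x^j, x^j y : j \ge 0\}$ (with $y^2$ reduced), and the analogue of Remark~\ref{after-zhu-alg} holds: $1$ and $[\widehat{H}]$ are algebraically independent over $\C[x]$. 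The polynomial $\ell(x) = \prod_{i=2m+1}^{3m}(x - h^{2i+1,1})$ has degree $m$, and $Sf_m(x) = \prod_{i=0}^{3m}(x - h^{2i+1,1})$ has degree $3m+1$. The quotient by the relations ``$\ell(x) y \equiv 0$ modulo things of the form $B(x) Sf_m(x)$'' and ``$y^2 \equiv C_m \prod_{i=0}^{2m}(x - h^{2i+1,1})$'' should leave a basis consisting of $\{x^j : 0 \le j \le 3m\}$ in the degree-zero (``$y^0$'') part, contributing $3m+1$ dimensions, and $\{x^j y : 0 \le j \le m-1\}$ in the degree-one part, contributing $m$ dimensions, for a total of $4m+1$; here I would need to check that $\ell$ and $Sf_m/\ell^2$-type factors interact so that no further collapse occurs, which follows because $Sf_m(x) = \ell(x) \cdot (\text{something involving the other }h^{2i+1,1}\text{'s squared})$ and the $h^{2i+1,1}$ for $0 \le i \le 2m$ are distinct from those for $2m+1 \le i \le 3m$.

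Concretely, the steps in order: (1) recall $A(\striplet) = A_{-1} \oplus A_0 \oplus A_1$ with the stated dimension bounds from \cite{AdM-striplet}; (2) invoke Theorem~\ref{jezgra-n1} for the exact form of $\mbox{Ker}(\Phi)$; (3) use the presentation of $A(\smip)$ from Theorem~\ref{zhu-alg-n1} and the algebraic independence of $1, [\widehat{H}]$ over $\C[[\omega]]$ to exhibit an explicit basis of $A(\smip)/\mbox{Ker}(\Phi)$ of size $4m+1$, establishing $\dim A_0(\striplet) = 4m+1$; (4) add $m + (4m+1) + m = 6m+1$. The main obstacle is step (3): verifying that the principal ideal generated by $\ell([\omega]) * [\widehat{H}]$ in $A(\smip)$ has codimension exactly $4m+1$, i.e.\ that reducing $x^j y$ for $j \ge m$ via $\ell(x) y \equiv -C(x) Sf_m(x) \pmod{\mbox{Ker}(\Phi)}$ together with the $y^2$-relation produces no relations among $\{x^j : 0 \le j \le 3m\} \cup \{x^j y : 0 \le j \le m-1\}$. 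This is a concrete commutative-algebra computation with the explicit polynomials $\ell$ and $Sf_m$, and the non-collapse hinges on $\gcd$-type statements about the factorizations, exactly as the analogous check worked in the $\triplet$ case via Theorem~\ref{jezgra}; I would present it by remarking that it is ``completely analogous'' to the $\triplet$ argument, since Theorem~\ref{jezgra-n1} was itself proved by that analogy.
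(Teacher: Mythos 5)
Your proposal follows essentially the same route as the paper: the decomposition $A(\striplet)=A_{-1}(\striplet)\oplus A_0(\striplet)\oplus A_1(\striplet)$ with $\dim A_{\pm 1}(\striplet)=m$ from \cite{AdM-striplet}, Theorem \ref{jezgra-n1} for the exact kernel of $\Phi$, and the resulting codimension count $\dim A_0(\striplet)=(3m+1)+m=4m+1$ in $A(\smip)\cong \C[x,y]/\la P(x,y)\ra$, summing to $6m+1$. Your explicit basis $\{x^j:0\le j\le 3m\}\cup\{x^jy:0\le j\le m-1\}$ and the degree count for $\ell$ and $Sf_m$ simply spell out the step the paper leaves implicit, and the verification is correct.
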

This theorem was conjectured in \cite{AdM-striplet}. We now have completed description of the Zhu algebra $A(\striplet)$.

\begin{theorem} Zhu's algebra $A(\striplet)$ decomposes as a sum of ideals
$$A(\striplet)=\bigoplus_{i=2m+1}^{3m}  \mathbb{M}_{h^{2i+1,1}} \oplus \bigoplus_{i=0}^{m-1} \mathbb{I}_{h^{2i+1,1}}
\oplus \mathbb{C}_{h^{2m+1,1}},$$ where $\mathbb{M}_{h^{2 i+1,1}}
\cong M_2(\mathbb{C})$, $\dim(\mathbb{I}_{h^{2
i+1,1}}) =2$ and $\mathbb{C}_{h^{2m+1,1}}$ is one-dimensional.
\end{theorem}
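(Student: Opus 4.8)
The plan is to obtain the decomposition by first identifying the three "charge" pieces $A_{-1}(\striplet)$, $A_0(\striplet)$, $A_1(\striplet)$ from the preceding discussion and then reorganizing $A_0(\striplet)\cong A(\smip)/\mathrm{Ker}(\Phi)$ together with the off-diagonal pieces into matrix blocks. Concretely, I would start from the description $A_0(\striplet)\cong \C[x,y]/\la P(x,y)\ra$ modulo the ideal generated by $\ell(x)\, y$ and $Sf_m(x)$, where $x=[\omega]$, $y=[\widehat H]$ and $P(x,y)=y^2-C_m\prod_{i=0}^{2m}(x-h^{2i+1,1})$. Since $Sf_m(x)=\ell(x)\cdot\prod_{i=0}^{2m}(x-h^{2i+1,1})$ and $y^2=C_m\prod_{i=0}^{2m}(x-h^{2i+1,1})$ in $A(\smip)$, the relation $Sf_m([\omega])=0$ is equivalent to $\ell([\omega])*[\widehat H]^2=0$, so the two generators of $\mathrm{Ker}(\Phi)$ are governed by the single polynomial $\ell(x)=\prod_{i=2m+1}^{3m}(x-h^{2i+1,1})$ together with the roots of $\prod_{i=0}^{2m}(x-h^{2i+1,1})$. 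The key observation is that the roots $h^{2i+1,1}$ split into two groups: those with $2m+1\le i\le 3m$ (where $\ell$ vanishes, giving "big" blocks) and those with $0\le i\le 2m$ (where $\prod_{i=0}^{2m}(x-h^{2i+1,1})$ vanishes), and among the latter the value $h^{2m+1,1}$ plays a distinguished role because it is the one value where $y^2=0$ but $x-h^{2m+1,1}$ has a specific multiplicity.

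Next I would localize. The algebra $A_0(\striplet)$ is finite-dimensional and commutative, hence a product of local rings, one for each point of its spectrum, i.e. for each root of the relevant polynomials. For a root $h$ with $2m+1\le i\le 3m$: here $\ell$ vanishes to order one, so the constraint $\ell([\omega])*[\widehat H]=0$ and $Sf_m([\omega])=0$ still allow $[\widehat H]$ to survive together with the $[E],[F]$ directions; assembling the local piece of $A_0$ with the corresponding pieces of $A_{\pm1}(\striplet)$ — recall $A_{-1}(\striplet)=\mathrm{span}\{\ell_i([\omega])*[\widehat F]\}$ and $A_1(\striplet)=\mathrm{span}\{\ell_i([\omega])*[\widehat E]\}$, each $1$-dimensional at such a root — one gets a $4$-dimensional unital algebra spanned by $1$, $[\widehat H]$, $[\widehat E]$, $[\widehat F]$ (at that point) with multiplication governed by the structure constants $\widehat H^2\sim \nu\,\omega^{2m-1}$-type relations restricted to the fiber; these close up into a copy of $M_2(\C)$, giving the summands $\mathbb{M}_{h^{2i+1,1}}$ for $2m+1\le i\le 3m$, which is $m$ blocks of dimension $4$. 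For a root $h=h^{2i+1,1}$ with $0\le i\le m-1$ (coming from the $\Lambda$-side), $\ell(h)\ne 0$ so $\ell([\omega])$ is invertible in the local ring, forcing $[\widehat H]=0$ and $[\widehat E]=[\widehat F]=0$ there, but $y^2=C_m\prod(x-h^{2j+1,1})$ vanishes to order exactly one in $x-h$ at this point, producing a $2$-dimensional local algebra $\C[x]/(x-h)^2$ — these are the ideals $\mathbb{I}_{h^{2i+1,1}}$, giving $m$ blocks of dimension $2$. Finally the exceptional value $h^{2m+1,1}$ gives a $1$-dimensional (semisimple) piece $\mathbb{C}_{h^{2m+1,1}}$, because there $\prod_{i=0}^{2m}(x-h^{2i+1,1})$ and $\ell$ together pin the local ring down to $\C$.

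The main obstacle I anticipate is the bookkeeping of multiplicities and, in particular, proving that at the roots $h^{2i+1,1}$ with $2m+1\le i\le 3m$ the relevant local algebra is genuinely $M_2(\C)$ rather than some other $4$-dimensional algebra: this requires showing that the products $[\widehat E]*[\widehat F]$, $[\widehat H]*[\widehat E]$, etc., have nonzero structure constants on these fibers, which one reads off from the realization in $V_L\otimes F$ and from the identities already invoked (the $[E\circ F]=D_m\ell([\omega])*[\widehat H]$ relation and its analogues under $Q$), and then checking that the resulting $4$-dimensional unital associative algebra with a $2\times 2$ "matrix-unit-like" spanning set is indeed simple. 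The dimension count provides a useful consistency check: $4m + 2m + 1 = 6m+1 = \dim A(\striplet)$, matching the previous theorem, and $m$ copies of $M_2(\C)$ plus $m$ copies of the $2$-dimensional indecomposable plus one copy of $\C$ accounts for all irreducible and indecomposable modules listed in \cite{AdM-striplet}. Verifying that no further identification among these local pieces occurs — i.e. that the central idempotents separating the listed points all lie in $A(\striplet)$ — is the last point to nail down, and it follows once one knows the Jacobson radical of each local block has the stated dimension.
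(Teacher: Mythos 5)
Your strategy (charge decomposition $A(\striplet)=A_{-1}(\striplet)\oplus A_0(\striplet)\oplus A_1(\striplet)$, localization of the commutative algebra $A_0(\striplet)\cong A(\smip)/\mathrm{Ker}(\Phi)$ at the points of its spectrum, then identification of the blocks) is the right one, but the multiplicity bookkeeping --- which you yourself single out as the delicate point --- is wrong, and it is wrong exactly where the two-dimensional ideals $\mathbb{I}_{h^{2i+1,1}}$ come from. You claim that at $h=h^{2i+1,1}$ with $0\le i\le m-1$ the polynomial $\prod_{j=0}^{2m}(x-h^{2j+1,1})$ vanishes to order exactly one, and that this yields the local algebra $\C[x]/(x-h)^2$. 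These two assertions are incompatible: once $\ell([\omega])$ is invertible locally and forces $[\widehat{H}]=0$, the relation $y^2=C_m\prod_{j=0}^{2m}(x-h^{2j+1,1})$ imposes the vanishing of that product, so an order-one zero would give the one-dimensional local ring $\C[x]/(x-h)$; moreover, since the product has degree $2m+1$ and only $m+1$ distinct roots, your multiplicity claim would force an $(m+1)$-dimensional local block at $h^{2m+1,1}$, i.e.\ a decomposition different from the one asserted in the theorem. The correct input is that $h^{2j+1,1}=\frac{(m-j)^2-m^2}{2(2m+1)}$ is invariant under $j\mapsto 2m-j$, hence
$$\prod_{j=0}^{2m}(x-h^{2j+1,1})=(x-h^{2m+1,1})\prod_{j=0}^{m-1}(x-h^{2j+1,1})^2,$$
so this polynomial has double zeros at $h^{2j+1,1}$ for $0\le j\le m-1$ and a unique simple zero at $h^{2m+1,1}$. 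It is the double zeros that produce $\C[x]/(x-h)^2$, i.e.\ $\dim\mathbb{I}_{h^{2j+1,1}}=2$ (and, via Zhu's correspondence, the logarithmic modules), and the simple zero that produces the one-dimensional block $\mathbb{C}_{h^{2m+1,1}}$. With this correction your localization gives $2m+2m+1=4m+1=\dim A_0(\striplet)$, in agreement with Theorem \ref{jezgra-n1}.

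The second point you flag --- that the four-dimensional blocks at $h^{2i+1,1}$, $2m+1\le i\le 3m$, are genuinely $M_2(\C)$ --- you propose to settle by computing structure constants of $[\widehat{E}]*[\widehat{F}]$ and the like; this is not carried out in your proposal and is not the economical route. The argument implicit in the paper (resting on the classification of irreducibles and Section 11 of \cite{AdM-striplet}) is representation-theoretic: the irreducible $\striplet$-module $S\Pi$ with lowest conformal weight $h^{2i+1,1}$ has a two-dimensional top component, which is an irreducible module for the corresponding block of $A(\striplet)$; a four-dimensional associative algebra admitting a two-dimensional irreducible module surjects onto $M_2(\C)$ and hence equals it. Once these two repairs are made, your proposal does reproduce the stated decomposition.
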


The ideals $\mathbb{I}_{h^{2
i+1,1}}$ give a family of $2$--dimensional indecomposable modules for $A(\striplet)$.

By applying  Zhu's correspondence we get:

\begin{corollary}
For every $1 \le i \le m$, there exits the logarithmic, self-dual,  $\N$--graded $\striplet$--module $\mathcal{SP}_{i} ^+$ such that the top component $\mathcal{SP}_{i} ^+ (0)$ is two-dimensional and
$L(0)$ acts on   $\mathcal{SP}_{i} ^+ (0)$ (in some basis) as
$$\left(
    \begin{array}{cc}
      h^{2 i+1,1} & 1 \\
      0 & h^{2 i +1,1} \\
    \end{array}
  \right) .$$

\end{corollary}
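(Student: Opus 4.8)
The plan is to transfer the structural description of the Zhu algebra $A(\striplet)$ obtained above into a statement about $\N$-graded modules via Zhu's correspondence. First I would invoke the general principle that for any $\N$-graded $A(\striplet)$-module $U$ there is a lowest-weight $\striplet$-module whose top component is (isomorphic to) $U$; more precisely, for each of the $m$ two-dimensional indecomposable $A(\striplet)$-modules $\mathbb{I}_{h^{2i+1,1}}$ produced by Theorem \ref{jezgra-n1} and the ensuing decomposition of $A(\striplet)$, one obtains an $\N$-graded $\striplet$-module $\mathcal{SP}_i^+$ with $\mathcal{SP}_i^+(0)\cong \mathbb{I}_{h^{2i+1,1}}$ as $A(\striplet)$-modules. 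On the top component $o(\omega)=L(0)$ acts as the image of $[\omega]$ in $\mathbb{I}_{h^{2i+1,1}}$, and since $[\omega]$ lies in the center of $A(\striplet)$ and $\mathbb{I}_{h^{2i+1,1}}$ is a nonsplit self-extension on which $[\omega]$ acts with a single eigenvalue $h^{2i+1,1}$, the operator $[\omega]$ acts by a nontrivial Jordan block of size two; choosing a Jordan basis gives precisely the displayed matrix.

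Next I would address self-duality. The standard way is to use the contragredient (restricted dual) module $(\mathcal{SP}_i^+)'$, which is again $\N$-graded with the same graded dimensions, and whose top component is the $A(\striplet)$-module dual to $\mathbb{I}_{h^{2i+1,1}}$ with respect to the bilinear form coming from the anti-involution on $A(\striplet)$. Because $\mathbb{I}_{h^{2i+1,1}}$ is a two-dimensional indecomposable ideal in $A(\striplet)$ on which the central element $[\omega]$ acts self-adjointly with a single eigenvalue, its dual is isomorphic to itself as an $A(\striplet)$-module; one then argues — as is typical for these logarithmic modules — that the module $\mathcal{SP}_i^+$ can be chosen so that this top-component isomorphism lifts to an isomorphism $\mathcal{SP}_i^+\cong(\mathcal{SP}_i^+)'$, e.g.\ by taking $\mathcal{SP}_i^+$ to be the maximal $\N$-graded module (or a suitable logarithmic analogue of the generalized Verma module) with the prescribed top component, which is automatically self-dual when its top component is. This is the point where one has to be slightly careful: self-duality is not automatic for an arbitrary choice of $\mathcal{SP}_i^+$, so one must either cite the relevant construction from \cite{AdM-striplet} or note that the $C_2$-cofiniteness of $\striplet$ guarantees finite-dimensional graded pieces and hence a well-defined maximal graded module whose top is $\mathbb{I}_{h^{2i+1,1}}$.

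The remaining routine points are: the module is logarithmic precisely because $L(0)$ fails to act semisimply on the top component (the Jordan block is nontrivial), and this non-semisimplicity propagates, so the module is genuinely not a direct sum of ordinary weight modules; and $\N$-gradedness follows from the general Zhu construction together with the fact that all conformal weights appearing are of the form $h^{2i+1,1}+n$ with $n\in\N$ and $h^{2i+1,1}\ge 0$ for the relevant range of $i$ (or, more robustly, from the lower-truncation property of the constructed module). The main obstacle is the self-duality assertion: producing \emph{some} logarithmic module with the given top component is a formal consequence of Zhu's theory, but arranging that it is isomorphic to its own contragredient requires identifying the correct module in the Zhu-theoretic picture (the top-heavy maximal module) and checking that the invariant bilinear form on its top component is nondegenerate — which it is, since $\mathbb{I}_{h^{2i+1,1}}$ carries a nondegenerate symmetric form compatible with the anti-involution fixing $[\omega]$.
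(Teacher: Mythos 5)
Your proposal follows essentially the same route as the paper: the decomposition of $A(\striplet)$ into ideals supplies the two-dimensional indecomposable $A(\striplet)$-modules $\mathbb{I}_{h^{2i+1,1}}$ on which $[\omega]$ acts by a nontrivial Jordan block, and Zhu's correspondence lifts these to logarithmic $\N$-graded $\striplet$-modules with the stated $L(0)$-action on the top component — which is precisely (and all that) the paper argues. The one caution is your fallback claim that the maximal $\N$-graded module with self-dual top component is ``automatically self-dual'' (generalized Verma-type lifts are in general not self-dual), so the self-duality assertion should rest on your first option, namely the construction and structural results cited from \cite{AdM-striplet}, which is implicitly how the paper treats it.
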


 \section{Twisted Zhu's algebra $A_{\sigma} (\striplet)$}

Every vertex operator superalgebra $V ^{\bar 0} \oplus V ^{\bar 1}$ has the canonical
parity automorphism $\sigma$, where $\sigma_{V^{\bar 0}}=1$ and
$\sigma_{V^{\bar 1}}=-1$.  We briefly recall the notion of $\sigma$-twisted Zhu's algebra (cf. \cite{Xu}).

Consider the subspace $O_{\sigma}(V)
\subset V$, spanned by elements of the form
$${\rm Res}_x \frac{(1+x)^{{\wt}(u)}}{x^2} Y(u,x)v,$$
where $u \in V$ is homogeneous.
It can be easily shown that
$${\rm Res}_x \frac{(1+x)^{{\wt }(u)}}{x^n} Y(u,x)v \in O_{\sigma} (V) \ \mbox{for} \  \ n \geq 2.$$

Then, the vector space $A_\sigma(V)=V/O_{\sigma}(V)$ is equipped with
an associative algebra structure via
$$u * v={\rm Res}_x \frac{(1+x)^{\rm deg(u)}}{x} Y(u,x)v$$
 An important difference between the untwisted
associative algebra and $A_\sigma(V)$ is that $A_\sigma(V)$ is
$\mathbb{Z}_2$--graded, so
$$A_\sigma(V)=A^0_\sigma(V) \oplus A^1_\sigma(V).$$
  We shall often use $[a] \in A_{\sigma}(V)$ for
the image of $a \in V$ under the map $V \longrightarrow
A_{\sigma}(V)$.

It is not hard to prove the following result.

\begin{proposition} Let $V$ be a vertex operator superalgebra as in Section 2.
There is a natural surjective superalgebra map from $\mathcal{P}(V)$ to $gr(A_{\sigma}(V))$.
\end{proposition}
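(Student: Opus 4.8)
The plan is to transport Zhu's construction of the conformal-weight filtration on $A(V)$, whose associated graded surjects onto $\mathcal{P}(V)$ (the one alluded to at the end of Section 2), to the $\sigma$-twisted super setting.

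First I would filter $A_\sigma(V)$ by conformal weight: let $F_n A_\sigma(V)$ be the image of $\bigoplus_{k \le n} V(k)$ under $V \to A_\sigma(V)$, with $n$ ranging over $\tfrac12 \mathbb{Z}_{\ge 0}$, and put $F_{<n} := \sum_{m<n} F_m$. This is an increasing, exhaustive, $\mathbb{Z}_2$-graded filtration. Expanding the product, $u * v = \mathrm{Res}_x \tfrac{(1+x)^{\deg u}}{x} Y(u,x) v = \sum_{k \ge 0} \binom{\deg u}{k}\, u_{k-1} v$ with $u_{k-1} v \in V(\deg u + \deg v - k)$, one gets $F_m * F_n \subseteq F_{m+n}$, so $\mathrm{gr}\, A_\sigma(V)$ is a $\mathbb{Z}_2$-graded associative algebra, and moreover $[u] * [v] \equiv [u_{-1} v] \pmod{F_{<\deg u + \deg v}}$ for homogeneous $u,v$.

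Next I would define $\Psi \colon \mathcal{P}(V) \to \mathrm{gr}\, A_\sigma(V)$ by $\bar{u} \mapsto [u] + F_{<\deg u}$ on homogeneous classes, extended linearly, and check it is well defined: since $C_2(V)$ is spanned by homogeneous elements $u_{-2} v$ (of weight $\deg u + \deg v + 1$), it suffices to see $[u_{-2} v] \in F_{\deg u + \deg v}$. Here the defining relation of $O_\sigma(V)$ does the work: $\mathrm{Res}_x \tfrac{(1+x)^{\wt u}}{x^2} Y(u,x) v = \sum_{k\ge 0}\binom{\wt u}{k} u_{k-2} v \in O_\sigma(V)$ forces $[u_{-2} v] = - \sum_{k \ge 1} \binom{\wt u}{k}[u_{k-2} v]$ in $A_\sigma(V)$, and each $u_{k-2} v$ with $k \ge 1$ has weight $\le \wt u + \wt v$, hence lies in $F_{\deg u + \deg v}$. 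Thus $\Psi$ descends to $\mathcal{P}(V)$; it is $\mathbb{Z}_2$-graded since $V^{\bar 1} \to A_\sigma^1(V)$, it sends the unit to the unit, and it is multiplicative by the congruence above: $\Psi(\bar{u}\cdot\bar{v}) = \Psi(\overline{u_{-1}v}) = [u_{-1}v] + F_{<\deg u+\deg v} = \Psi(\bar{u})\,\Psi(\bar{v})$, the sign rule being automatic from the grading. Surjectivity is immediate, since $F_n / F_{<n}$ is spanned by the classes $[u] + F_{<n}$ with $u \in V(n)$, each of which is $\Psi(\bar{u})$.

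I expect the only real work to be bookkeeping rather than a genuine obstacle: one must keep careful track of how the $\mathbb{Z}_2$-grading interacts with the half-integer conformal weights carried by $V^{\bar 1}$ when indexing the filtration, and one must invoke precisely the relations generating $O_\sigma(V)$ (the $1/x^2$ relation above, and more generally the $1/x^n$ relations for $n \ge 2$) to drop a $C_2$-relation one step in the filtration. It is worth remarking that, unlike the case where one aims at an algebra isomorphism, no separate ``super skew-symmetry'' identity inside $A_\sigma(V)$ is required here: supercommutativity of $\mathrm{gr}\, A_\sigma(V)$ follows automatically from that of $\mathcal{P}(V)$ once $\Psi$ is known to be a surjective algebra map. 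In short, the argument is the verbatim $\sigma$-twisted, super analogue of Zhu's original construction.
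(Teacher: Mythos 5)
Your argument is correct, and it is exactly the construction the paper has in mind: the authors give no written proof of this proposition (they state it as "not hard to prove"), only alluding at the end of Section 2 to the increasing filtration by conformal weight, which is precisely what you build, using the defining relations of $O_{\sigma}(V)$ to show the image of $C_2(V)$ drops one step in the filtration. So your proposal fills in the omitted details along the same standard route rather than taking a different one.
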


Here we shall consider the $N=1$ vertex operator superalgebras $ \smip$ and $\striplet$ and the corresponding twisted Zhu's algebras $A_{\sigma} (\smip)$ and $A_{\sigma} (\striplet)$.

\begin{theorem} \cite{AdM-tstriplet}
The associative algebra $A_{\sigma}(\overline{SM(1)})$ is isomorphic to the
${\Z}_2$--graded commutative associative algebra $${\C}[x,y] / \la
H(x,y) \ra$$ where $\la H(x,y) \ra$ is (two-sided) ideal in
${\C}[x,y]$, generated by
$$H(x,y) = y^2 - \widetilde{C}_m \prod_{i=0}^{m-1}\left(x^2-\frac{(2i+1-2m)^2}{8(2m+1)}\right)^2 ,$$
where  $ \widetilde{C}_m = \frac{2^{2m-1} (2m+1) ^{2m} }{ (2m)! ^2}. $
(Here $x$ and $y$ correspond to $[\tau]$ and $[H]$).
\end{theorem}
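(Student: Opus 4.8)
The plan is to follow the same pattern used in Theorem \ref{zhu-alg} and Theorem \ref{zhu-alg-n1}, but now adapted to the twisted setting. First I would recall from \cite{AdM-tstriplet} the explicit description of $\smip$ as a vertex operator superalgebra generated by $\tau$ and $H$, and of the twisted modules: in the $\sigma$-twisted picture the Neveu--Schwarz field $G(z)$ has integral mode expansion, so $A_\sigma(\smip)$ is generated by the classes $[\tau]$ and $[H]$ rather than by $[\omega]$ and $[\widehat H]$. The key structural input is that $[\tau]$ generates a polynomial subalgebra $\C[x]$ (the even part being $\C[x^2]$) and that $1$ and $[H]$ are linearly independent over this subalgebra, exactly as in Remark \ref{after-zhu-alg}; this is what forces the relation ideal to be principal.

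The core computation is to determine the single relation $H(x,y)$. I would compute $[H]*[H]$ in $A_\sigma(\smip)$ by evaluating the operator $o([H])=H_{\wt(H)-1}$ (with the twisted grading shift) on the top components of the $\sigma$-twisted $\smip$-modules. Since the irreducible twisted modules are indexed by a discrete family of conformal weights, acting $o(H)^2$ on each one-dimensional top space gives the value of the even polynomial $y^2$ at the corresponding value of $x=[\tau]$; matching these values (and using that a polynomial is determined by enough evaluation points, with degree bounds coming from $C_2$-cofiniteness of $\smip$) pins down the product $\prod_{i=0}^{m-1}(x^2-\tfrac{(2i+1-2m)^2}{8(2m+1)})^2$ and the leading constant $\widetilde C_m=\tfrac{2^{2m-1}(2m+1)^{2m}}{(2m)!^2}$. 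The appearance of squares reflects the fact that, on a two-dimensional twisted top component, $o(H)$ acts with a pair of eigenvalues $\pm$ of the same absolute value, so each factor enters twice; the shift $2i+1-2m$ versus the untwisted $h^{2i+1,1}$ is exactly the twisted analogue of the weight formula.

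The surjectivity of the map $\C[x,y]\to A_\sigma(\smip)$ is immediate from the generation statement, so the substance is showing that $H(x,y)$ generates the full kernel, i.e. that $\dim A_\sigma(\smip)$ does not exceed the expected value. For the upper bound I would use $C_2$-cofiniteness: a spanning set of $\smip/C_2(\smip)$ built from $\tau$ and $H$ descends to a spanning set of $\mathrm{gr}\,A_\sigma(\smip)$, which bounds the dimension and limits the degree in $x$ of any relation; combined with the explicit relation $y^2=\widetilde C_m\prod(\cdots)^2$ and linear independence of $1,[H]$ over $\C[x]$, every element is uniquely $A(x)+B(x)[H]$ with $A$ of bounded degree, giving equality. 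The hard part, as usual in this paper, is getting the sharp \emph{upper} bound on $\dim A_\sigma(\smip)$ — i.e. verifying that no further relations hide in $O_\sigma(\smip)$ beyond the obvious one; this is where a constant-term identity of the type in Theorem \ref{identity-striplet} is needed, to guarantee that the evaluation data coming from the twisted modules already exhausts the relations rather than merely being consistent with them.
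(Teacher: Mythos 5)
First, note that the paper itself does not prove this statement: it is imported verbatim from \cite{AdM-tstriplet}, so the comparison has to be with the proof given there, whose overall shape (generators $[\tau]$, $[H]$, one explicit relation, evaluation on $\sigma$-twisted modules) your outline broadly mirrors. However, your argument for the hard direction has concrete gaps. The main one is the appeal to ``degree bounds coming from $C_2$-cofiniteness of $\smip$'': the singlet algebra $\smip$ is \emph{not} $C_2$-cofinite --- its twisted Zhu algebra is the infinite-dimensional algebra ${\C}[x,y]/\la H(x,y)\ra$, whereas $C_2$-cofiniteness would force finite-dimensional Zhu algebras --- so no such bound exists and this step collapses. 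Relatedly, you propose to pin down the relation by evaluating on a ``discrete family'' of irreducible twisted modules; with an infinite-dimensional target and no a priori degree bound, finitely or discretely many evaluation points cannot exclude further relations. The actual argument in \cite{AdM-tstriplet} (and the one this paper reuses in the proof of Lemma \ref{lem-tw-1}) evaluates on the \emph{continuous} one-parameter family of $\sigma$-twisted Fock modules $M(1,\lambda)\otimes M$ restricted to $\smip$: the eigenvalues of $o(\tau)$ and $o(H)$ on the two-dimensional top component are explicit functions of $\lambda$, so any putative extra relation, reduced modulo $H(x,y)$ to the form $A(x)+B(x)y$, must vanish identically as a function of $\lambda$, which forces $A=B=0$.

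The second gap is circularity: you take as ``key structural input'' that $1$ and $[H]$ are independent over ${\C}[x]$ and that this ``forces the relation ideal to be principal.'' That independence is exactly equivalent to the kernel of ${\C}[x,y]\to A_{\sigma}(\smip)$ being no larger than $\la H(x,y)\ra$, i.e. it is the conclusion of the evaluation argument, not an input to it (compare Remark \ref{after-zhu-alg}, which is likewise a \emph{consequence} of the untwisted structure theorem). Finally, producing the relation itself is not just ``matching values'': one first needs the weight/filtration argument, based on the decomposition of $\smip$ into Neveu--Schwarz modules, showing that $[H]\ast[H]$ is a polynomial in $[\tau]$ of controlled degree, and then a genuinely nontrivial residue computation of the type recorded in Theorems \ref{identity-tstriplet} and \ref{tw-nova} to identify the zeros $\frac{(2i+1-2m)^2}{8(2m+1)}$ and the constant $\widetilde{C}_m$; your heuristic that the squared factors ``reflect a pair of eigenvalues $\pm$'' is not a substitute for that computation.
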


Define $$ h ^{ 2 i +2,1} = \frac{  (2 m +1 - 2 i -2)^2 - 4 m ^2}{ 8 (2 m+1)} + \frac{1}{16}. $$
We also have a natural homomorphism $\Phi : A_{\sigma} (\smip) \rightarrow A_{\sigma} (\striplet)$. By using similar approach as in the case of untwisted Zhu's algebras $A(\triplet)$ and $A(\striplet)$ we get the following result.

\begin{proposition} \label{ker-ramond}
$\mbox{Ker}(\Phi)$ is contained in the following ideal in $A_{\sigma}(\smip)$
$$ A_{\sigma}(\smip) . \{ r([\omega]) * [ H ] \} = \{ A([\tau]) * r ([\omega]) *   [ H ] + B([\tau]) * Rf_{m}([\omega]), \ \ A, B \in {\C}[x] \}, $$
where
 \bea
[\omega] =  &&  [\tau] ^2 + \frac{c_{2m+1,1}}{24}, \nonumber \\
 r(x) =  &&  \prod_{i= 2m } ^{3m} (x-h ^{2 i +2,1}), \nonumber \\
 Rf_{m}(x)=&&  \prod_{i= 0} ^{3m} (x-h ^{2 i +2,1}) . \nonumber \eea

\end{proposition}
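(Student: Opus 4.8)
The plan is to mirror, almost verbatim, the strategy used for the untwisted cases (Theorem \ref{jezgra} and Proposition \ref{ker-c0}), adapting it to the $\sigma$-twisted setting. First I would invoke the twisted analogue of Proposition \ref{prop-1}: since $\striplet$ carries the horizontal $\Z$-gradation with $G = Q$ satisfying (\ref{uv-1})--(\ref{uv-5}), the same argument as in Lemma \ref{lema-0} (applied with the twisted circle product $\circ$, i.e.\ with the operator $O_\sigma$ in place of $O$) yields $\mbox{Ker}(\Phi) \subseteq [Q(\striplet_{-1})]$ inside $A_\sigma(\smip)$. Then I would establish the twisted counterpart of Lemma \ref{lema-2-n1}: using the $\NS$-module structure of $\striplet$ from \cite{AdM-striplet} and the fact that $H_j Q^k e^{-k\a}$ lands in $U(\NS)Q^{k+1}e^{-(k+1)\a} \oplus U(\NS)Q^{k-1}e^{-(k-1)\a}$, one reduces modulo $O_\sigma(\smip)$ and inductively obtains that every $[g\, Q^n e^{-n\a}]$ with $n\ge 2$, $g\in U(\NS)$, lies in $A([\omega]) * [H] + B([\omega]) * Rf_m([\omega])$ for suitable $A,B \in \C[x]$. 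Here one uses Theorem \ref{zhu-alg-n1}'s twisted version to know that $[\omega] = [\tau]^2 + \tfrac{c_{2m+1,1}}{24}$ and that $1, [H]$ are independent over $\C[\tau]$, so these normal forms are unambiguous. Combining these two facts gives the inclusion $\mbox{Ker}(\Phi) \subseteq \{A([\tau])*[H] + B([\tau])*Rf_m([\omega])\}$.

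Next I would squeeze this further to get the factor $r([\omega])$ in front of $[H]$. As in the proof of Theorem \ref{jezgra}, I would multiply an arbitrary element $u = A([\tau])*[H] + B([\tau])*Rf_m([\omega])$ of the (two-sided) ideal $\mbox{Ker}(\Phi)$ by $[H]$ and use $[H]^2 = $ (a polynomial in $[\omega]$, read off from $H(x,y)$ in the twisted singlet Zhu algebra) to land back in the same normal form. Comparing the two expressions for $[H]*u$ via the independence statement forces $r(x) \mid A(x)$, where $r$ is the factor of $Rf_m$ corresponding to the "extra" zeros not already present in $[H]^2$. This yields $\mbox{Ker}(\Phi) \subseteq \{A_1([\tau])*r([\omega])*[H] + B_1([\tau])*Rf_m([\omega])\}$, which is exactly the asserted ideal $A_\sigma(\smip).\{r([\omega])*[H]\}$ (the appearance of $A([\tau]), B([\tau])$ rather than $A([\omega]), B([\omega])$ just reflects that $A_\sigma(\smip) \cong \C[x,y]/\langle H(x,y)\rangle$ is a free $\C[\tau]$-module with basis $1, [H]$, so the ideal generated by $r([\omega])*[H]$ has precisely this shape).

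The main obstacle I expect is \emph{bookkeeping of the conformal-weight data} that produces $r$ and $Rf_m$: one must verify that the polynomial $Rf_m(x) = \prod_{i=0}^{3m}(x - h^{2i+2,1})$ is indeed the one whose image lies in $\mbox{Ker}(\Phi)$ — i.e.\ that $\Phi(Rf_m([\omega])) = 0$ and $\Phi(r([\omega])*[H]) = 0$ in $A_\sigma(\striplet)$ — and that the "half" of it surviving after multiplication by $[H]$ is exactly $r$. This amounts to identifying the conformal weights $h^{2i+2,1}$ of the top components of the $\sigma$-twisted (Ramond) $\striplet$-modules, which is where the shift by $\tfrac{1}{16}$ in the definition of $h^{2i+2,1}$ enters and where the representation-theoretic input from \cite{AdM-striplet} and \cite{AdM-tstriplet} is essential. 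Once the zero sets are matched against the irreducible twisted modules, the inclusion $Rf_m([\omega]) \in \mbox{Ker}(\Phi)$ and $r([\omega])*[H] \in \mbox{Ker}(\Phi)$ follow as in the untwisted cases, completing the proof. (As in Proposition \ref{ker-c0}, at this stage we obtain only the inclusion $\subseteq$; the reverse inclusion, promoting this to an equality, would be handled later using a constant-term identity to show the relevant auxiliary polynomial is a unit.)
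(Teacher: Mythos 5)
Your proposal is correct and follows essentially the paper's intended route: the paper gives no written proof of Proposition \ref{ker-ramond} beyond stating that it follows ``by using similar approach'' to the untwisted cases, and your argument (twisted analogues of Proposition \ref{prop-1} and Lemma \ref{lema-2-n1}, followed by the $[H]$-multiplication and independence argument in $A_{\sigma}(\smip)\cong {\C}[x,y]/\langle H(x,y)\rangle$ that forces the factor $r([\omega])$) is exactly that adaptation of the proof of Theorem \ref{jezgra}. You also correctly observe that only the inclusion is asserted here, the reverse inclusion being supplied later via Lemmas \ref{lem-tw-1} and \ref{lem-tw-2} and the constant term identities.
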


\begin{corollary}
  The center of Zhu's algebra $A_{\sigma}(\striplet)$ is $ 6 m +2$--dimensional and  it is  isomorphic to $${\C}[x] / \la Rf_m( x ^2 +\tfrac{c_{2m+1,1}}{24} ) \ra. $$
\end{corollary}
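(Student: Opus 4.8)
The plan is to mimic, almost verbatim, the computation that established the dimension of the center of $A(\triplet)$ and of $A(\WW_{2,3})$, now in the twisted ($\sigma$-twisted, i.e. Ramond) setting. First I would invoke Proposition \ref{ker-ramond}: it tells us that $\mbox{Ker}(\Phi) \subseteq A_{\sigma}(\smip).\{r([\omega])*[H]\}$, and in particular that any polynomial $f \in \C[x]$ with $f([\omega]) \in \mbox{Ker}(\Phi)$ must be divisible by $Rf_m$. Conversely, since by the same Proposition $Rf_m([\omega]) \in \mbox{Ker}(\Phi)$, the image $\Phi(\C[[\omega]])$ of the subalgebra generated by $[\omega]$ is exactly $\C[[\omega]]/\langle Rf_m([\omega])\rangle$. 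Because $[\omega] = [\tau]^2 + \tfrac{c_{2m+1,1}}{24}$, writing $x = [\tau]$ this subalgebra is $\C[x]/\langle Rf_m(x^2 + \tfrac{c_{2m+1,1}}{24})\rangle$, which has dimension $2(3m+1) = 6m+2$ since $Rf_m$ has degree $3m+1$.

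Next I would argue that this subalgebra $\Phi(\C[[\omega]])$ is in fact the whole center of $A_{\sigma}(\striplet)$. This is the step that requires input from \cite{AdM-striplet} and \cite{AdM-tstriplet}: one uses the structural description of $A_{\sigma}(\striplet)$ analogous to the decomposition $A(\striplet) = A_{-1}(\striplet) \oplus A_0(\striplet) \oplus A_1(\striplet)$ that was already used in Section \ref{log-ext-min}-style arguments above. In the twisted case one has a corresponding horizontal $\Z$-grading, with $A_{\sigma}^0(\striplet)$ and $A_{\sigma}^1(\striplet)$ the parity pieces; the off-diagonal blocks $A_{\pm 1}$ are annihilated by (powers of) $[\omega]$-type elements, and a central element must act as a scalar on each irreducible module, hence lies in the commutative subalgebra generated by $[\omega]$. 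The argument is the same as the remark "the center of $A(\WW_{2,3})$ is isomorphic to the subalgebra generated by $[\omega]$": central elements are detected on the (finitely many) irreducible twisted and untwisted $\smip$-modules by their $[\omega]$-eigenvalue, and two elements of the $[\omega]$-subalgebra agreeing on all of these must be equal modulo $\mbox{Ker}(\Phi)$, which by Proposition \ref{ker-ramond} is the ideal generated by $Rf_m([\omega])$.

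Putting the two steps together: the center is $\Phi(\C[[\omega]]) \cong \C[x]/\langle Rf_m(x^2 + \tfrac{c_{2m+1,1}}{24})\rangle$, and its dimension equals $\deg\!\bigl(Rf_m(x^2 + \tfrac{c_{2m+1,1}}{24})\bigr) = 2\deg(Rf_m) = 2(3m+1) = 6m+2$, which is the claimed value. The isomorphism statement is immediate from the explicit generator $[\omega] = [\tau]^2 + \tfrac{c_{2m+1,1}}{24}$ and the identification of $\mbox{Ker}(\Phi)$ restricted to the $[\omega]$-subalgebra.

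I expect the main obstacle to be the second step — proving that the center is exactly the $[\omega]$-subalgebra and not larger. The surjectivity half (every element of $\Phi(\C[[\omega]])$ is central) is easy since $[\omega]$ is central. The reverse containment relies on the full structural analysis of $A_{\sigma}(\striplet)$ from \cite{AdM-striplet}, in particular knowing the matrix-algebra blocks and the two-dimensional indecomposable blocks in the twisted decomposition, together with the fact that a central idempotent-supported element is pinned down by conformal weights. Once that structure is in hand, the dimension count is a one-line degree computation on the polynomial $Rf_m(x^2 + \tfrac{c_{2m+1,1}}{24})$.
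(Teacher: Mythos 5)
There is a genuine gap, and it sits exactly at the point you call a ``one-line degree computation.'' You identify the candidate center as $\Phi(\C[[\omega]])$, the image of the subalgebra generated by $[\omega]$, and then claim that since $[\omega]=[\tau]^2+\tfrac{c_{2m+1,1}}{24}$ this algebra equals $\C[x]/\la Rf_m(x^2+\tfrac{c_{2m+1,1}}{24})\ra$ and has dimension $2(3m+1)=6m+2$. These are different algebras: $\C[[\omega]]/\la Rf_m([\omega])\ra\cong \C[u]/\la Rf_m(u)\ra$ has dimension $3m+1$, whereas the algebra in the statement is a polynomial algebra in the variable $x$ corresponding to $[\tau]$ (as in the theorem describing $A_{\sigma}(\smip)$) modulo a polynomial of degree $6m+2$. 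Since $[\tau]$ is odd, it is not a polynomial in $[\omega]$, so the subalgebra generated by $[\tau]$ strictly contains the one generated by $[\omega]$; the center asserted in the corollary is the image of $\C[[\tau]]$, not of $\C[[\omega]]$. Consequently your second step would have to establish -- and does not -- that the odd element $[\tau]$ is central in $A_{\sigma}(\striplet)$, i.e. that it commutes with $[E]$, $[F]$ and their descendants in the twisted Zhu algebra; the ``acts by a scalar on irreducibles'' heuristic gives neither this nor, in a non-semisimple algebra with two-dimensional indecomposable blocks, membership of central elements in a prescribed commutative subalgebra. Once the center is identified with $\Phi(\C[[\tau]])$ (which is what the paper takes from the structural results of \cite{AdM-tstriplet}, in analogy with the $\WW_{2,3}$ case where the identification came from \cite{AdM-2009}), the divisibility half of your argument is fine: Proposition \ref{ker-ramond} together with the twisted analogue of Remark \ref{after-zhu-alg} (independence of $1$ and $[H]$ over $\C[[\tau]]$) shows that $f([\tau])\in\mbox{Ker}(\Phi)$ forces $Rf_m(x^2+\tfrac{c_{2m+1,1}}{24})\mid f(x)$.

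A second, smaller but real, gap: you assert that ``by the same Proposition'' $Rf_m([\omega])\in\mbox{Ker}(\Phi)$. Proposition \ref{ker-ramond} only gives the upper containment $\mbox{Ker}(\Phi)\subseteq A_{\sigma}(\smip).\{r([\omega])*[H]\}$; it exhibits no element of the kernel. The membership $r([\omega])*[H]\in\mbox{Ker}(\Phi)$, hence $Rf_m([\omega])=\widetilde{C}_m^{-1}[H]*r([\omega])*[H]\in\mbox{Ker}(\Phi)$, is obtained in the paper only later, via Lemma \ref{lem-tw-1} and the constant-term identity of Theorem \ref{tw-nova} (or from the results of \cite{AdM-tstriplet}). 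Without this input your divisibility argument only shows that $\mbox{Ker}(\Phi)\cap\C[[\tau]]\subseteq\la Rf_m([\omega])\ra$, so the image of $\C[[\tau]]$ surjects onto $\C[x]/\la Rf_m(x^2+\tfrac{c_{2m+1,1}}{24})\ra$ and has dimension at least $6m+2$; the claimed isomorphism and the exact value $6m+2$ require the reverse inclusion as a separate input.
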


Define the following two vectors in $\smip$:
$$  U^{F,\widehat{E}} =\mbox{Res}_{z} Y(F,z) \widehat{E} \frac{(  1+ z) ^{ 2m + \tfrac{1}{2} }} {z ^2},  \quad   U^{F,E} =\mbox{Res}_{z} Y(F,z) E \frac{(  1+ z) ^{ 2m + \tfrac{1}{2} }} {z ^3}.  $$

 \begin{lemma} \label{lem-tw-1}
 In $A_{\sigma} (\smip)$ we have
 $$  [U^{F,\widehat{E}}] = A_m   \  r([\omega])* [H], \quad A_m \ne 0. $$
 \end{lemma}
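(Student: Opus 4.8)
The plan is to compute $[U^{F,\widehat{E}}]$ directly inside $A_\sigma(\smip)$ and identify it, up to a nonzero scalar, with $r([\omega])*[H]$, by exploiting the fact — already used in the untwisted case — that $A_\sigma(\smip)$ is commutative and that $1$ and $[H]$ are algebraically independent over $\mathbb{C}[[\tau]]=\mathbb{C}[[\omega]^{1/2}]$ (cf.\ the description in Theorem \ref{zhu-alg-n1}'s $\sigma$-twisted analog and Remark \ref{after-zhu-alg}). First I would note that $U^{F,\widehat{E}}$ lies in $\smip$ by construction: it is $\mathrm{Res}_z Y(F,z)\widehat{E}$ against a power series in $z$, and since $F,\widehat{E}\in\striplet_{\mp 1}$ (more precisely $F\in\striplet_{-1}$, $\widehat{E}=G(-1/2)E\in\striplet_{1}$), the product lands in $\striplet_0=\smip$. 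Hence $[U^{F,\widehat{E}}]$ makes sense in $A_\sigma(\smip)$, and since that algebra is $\mathbb{Z}_2$-graded with $[H]$ in the odd part and $[\omega],[\tau]$ controlling the even/odd parts, we may write
$$[U^{F,\widehat{E}}] = f([\omega])*[H] + g([\omega])$$
for polynomials $f,g$ (after absorbing possible $[\tau]$-dependence using $[\omega]=[\tau]^2+c_{2m+1,1}/24$ and the parity of $U^{F,\widehat{E}}$, which should force the companion term to be purely a polynomial in $[\omega]$ times $[H]$, with no bare even term — or if an even term survives, it is handled as in Theorem \ref{jezgra}).

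Next I would evaluate both sides on the family of one-parameter $A_\sigma(\smip)$-modules coming from the top components of the irreducible $\striplet$-modules (or, more efficiently, on the generic one-parameter family parametrized by $t\in\mathbb{C}$, exactly as in the proof of the preceding lemma on $[E\circ F]$). On such a module $[\omega]$ acts by a scalar of the form $\tfrac{t(t-2m)}{2(2m+1)}+(\text{shift})$ and $[H]$ (or $[\widehat{H}]$, related by $G(-1/2)$) acts by an explicit scalar, a binomial coefficient $\binom{t}{\,\cdot\,}$-type expression; the left side $[U^{F,\widehat{E}}]$ acts by a contour-integral/residue expression in $t$ that unwinds to a product of linear factors $\prod(t - \cdot)$. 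Matching the two polynomial identities in $t$ pins down $f$ up to a nonzero constant and forces $g=0$. The zeros of $f$, rewritten in the $[\omega]$-variable via the substitution $x=\tfrac{t(t-2m)}{2(2m+1)}+\tfrac{1}{16}$, should be exactly $h^{2i+2,1}$ for $2m\le i\le 3m$, giving $f(x)=A_m\, r(x)$ with $A_m\ne 0$; this is where the definition of $h^{2i+2,1}$ in Proposition \ref{ker-ramond} and the shift $\tfrac{1}{16}$ (the Ramond sector vacuum anomaly of the free fermion) come in.

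The main obstacle I expect is the explicit residue computation of $[U^{F,\widehat{E}}]$ on the module family: one must commute the product $Y(F,z)\widehat{E}$ past the module action, which involves the Neveu--Schwarz modes $G(n+1/2)$ acting on $E$ and $F$ and the half-integer power $(1+z)^{2m+1/2}$, and then extract the residue. This is the twisted, $N=1$ analog of the constant-term identities the authors advertise in the abstract (cf.\ Theorem \ref{identity-striplet} and the Appendix of \cite{AdM-striplet}); I would reduce it to such an identity rather than compute from scratch, and the bookkeeping of the $\tfrac{1}{16}$ shift between $[\omega]$ acting on $\smip$ versus on the Ramond-twisted modules is the delicate point. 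Once the polynomial identity in $t$ is in hand, the conclusion $[U^{F,\widehat{E}}]=A_m\, r([\omega])*[H]$ with $A_m\ne 0$ is immediate from the algebraic independence of $1$ and $[H]$ over $\mathbb{C}[[\omega]]$.
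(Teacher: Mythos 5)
Your proposal follows essentially the same route as the paper: evaluate $o(U^{F,\widehat{E}})$ on the highest weight vectors of the $\sigma$-twisted $\smip$-modules parametrized by $t$, reduce the resulting residue expressions to the twisted constant-term identities (Theorem \ref{tw-nova}), and match the closed form $\nu\binom{t+m+1/2}{4m+2}$ against the action of $r([\omega])*[H]$ to pin down $A_m\neq 0$. The only small slip is that by parity the companion term should be of the form $g([\omega])*[\tau]$ rather than a bare $g([\omega])$, but the evaluation on the module family kills it either way, exactly as in your argument.
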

\begin{proof}
Let $v_{\lambda} $ be the highest weight vector in $\sigma$--twisted $\smip$--module $M(1,\lambda) \otimes M  $, where $M(1,\lambda)$ is an irreducible module for the Heisenberg vertex algebra $M(1)$ and $M$ is the $\sigma$--twisted module for Clifford vertex superalgebra $F$ (see  \cite{AdM-tstriplet} for details). By direct calculation which uses the concepts from \cite{AdM-tstriplet} we get
$$o(U ^{F,\widehat{E}}) v_{\lambda}  = (R^{(1)} _m (t) + R ^{(2)} _m (t) ) v_{\lambda} $$
where
\bea
R^{(1)} _m (t) & =  & \nu_1
Res_{z_1,z_2,z_3} \frac{(1+z_1)^{2m+1/2-t}}{z_1^2} (z_1 z_2 z_3)^{-2m-1} \nonumber \\
&& \cdot (1-z_2/z_1)^{-2m-1}(1-z_3/z_1)^{-2m-1}(z_2-z_3)^{2m}
(1+z_2)^{t+1/2}(1+z_3)^{t-1/2}, \nonumber \\
&& \ \nonumber \\
R^{(2)} _m (t) & = & \nu_2
Res_{z_1,z_2,z_3} \frac{(1+z_1)^{2m+1/2-t}}{z_1} (z_1 z_2 z_3)^{-2m-2} \nonumber \\
&& \cdot (1-z_2/z_1)^{-2m-1}(1-z_3/z_1)^{-2m-1}(z_2-z_3)^{2m+2}
(1+z_2)^{t- 1/2}(1+z_3)^{t-1/2} \nonumber
\eea
and $\nu_1, \nu_2$ are non-zero complex numbers.
Now Theorem  \ref{tw-nova} gives that
$$ R^{(1)} _m (t) =0, \qquad R^{(2)} _m (t) = {\nu} { t + m + 1/2 \choose 4 m +2} \qquad (\nu \ne 0). $$
This easily implies that in the  twisted Zhu's algebra $A_{\sigma} (\smip)$ we have
$$[U ^{ F,\widehat{E}}] = A_m  r([\omega]) * [H], \quad ( A _m \ne 0).$$
The proof follows.
\end{proof}

 \begin{lemma} \label{lem-tw-2}
 Inside $A_{\sigma} (\smip)$ we have
 $$  [U^{F,E}] = B_m   \  r([\omega])* [\tau] * [H], \quad B_m \ne 0. $$
 \end{lemma}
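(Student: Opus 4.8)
The plan is to imitate closely the proof of Lemma \ref{lem-tw-1}, since the two vectors $U^{F,\widehat{E}}$ and $U^{F,E}$ differ only by the extra factor of $z^{-1}$ in the generating residue and by replacing $\widehat{E}$ with $E$. First I would observe that, as an element of $A_{\sigma}(\smip)$, we may write
$$[U^{F,E}] = f([\omega]) * [\tau] * [H] + g([\omega]) * [H] + a([\omega]) * [\tau] + b([\omega])$$
for suitable polynomials $f,g,a,b \in {\C}[x]$; this follows from the structure of $A_{\sigma}(\smip)$ given by Theorem \ref{zhu-alg-n1} together with the fact that $[\tau]$ generates the odd part and $U^{F,E}$ lies in the appropriate parity component (it is odd, being built from the even $F$, the even $E$ and an odd number of $G(-1/2)$'s absent, so one checks the parity carefully and discards the terms of the wrong parity). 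The goal is then to show $g = a = b = 0$ and $f = B_m \ell_r$ for the polynomial $r$, with $B_m \ne 0$.

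The key step is to evaluate both sides on the one-parameter family of $\sigma$-twisted $\smip$-modules $M(1,\lambda)\otimes M$ used in Lemma \ref{lem-tw-1}. Applying $o(U^{F,E})$ to the highest weight vector $v_\lambda$ and using the vertex operator realization of $F$, $E$ inside $V_L\otimes F$, I would obtain, exactly as before, an expression $o(U^{F,E})v_\lambda = \widetilde{R}^{(1)}_m(t)v_\lambda + \widetilde{R}^{(2)}_m(t)v_\lambda$, where now the two residues carry one fewer power of $z_1$ in the numerator denominator than in Lemma \ref{lem-tw-1} (because of the $z^{-3}$ versus $z^{-2}$), and $E$ rather than $\widehat{E}$ changes the $G(-1/2)$-contribution, hence the power of $(z_2-z_3)$ and the $(1+z_i)$ exponents shift accordingly. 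Then I would invoke the relevant constant term identity (the analogue of Theorem \ref{tw-nova}, or Theorem \ref{tw-nova} itself applied after a shift of the parameter) to conclude that the first residue vanishes identically and the second equals a nonzero multiple of a binomial coefficient, say ${t+m+1/2 \choose 4m+2}$ times a linear factor in $t$ coming from the extra weight. Writing this back in terms of the eigenvalues $[\omega]\mapsto \tfrac{t(t-2m)}{2(2m+1)} + \tfrac{c_{2m+1,1}}{24}$ and $[\tau]\mapsto$ the appropriate square root, the identity forces $g=a=b=0$ and identifies $f$ with $B_m$ times the polynomial $r$ (its roots being precisely the $h^{2i+2,1}$, $2m\le i\le 3m$), and the nonvanishing of the leading constant in the identity gives $B_m\ne 0$. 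This yields $[U^{F,E}] = B_m\, r([\omega]) * [\tau] * [H]$ as claimed.

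The main obstacle will be the bookkeeping in the constant term computation: tracking the exact exponents of $z_1,z_2,z_3$, of $(1-z_2/z_1)$, $(1-z_3/z_1)$, $(z_2-z_3)$ and the $(1+z_i)$ factors after inserting the extra $z^{-1}$ and replacing $\widehat{E}$ by $E$, and then matching the resulting triple residue to the form covered by Theorem \ref{tw-nova}. One subtlety is confirming that no term proportional to $[\tau]$ alone or to a pure polynomial in $[\omega]$ survives — i.e. that the $t$-dependence of $o(U^{F,E})v_\lambda$ is genuinely of the form (binomial in $t$) $\times$ (odd part), with no stray even piece; this should follow from the parity of $U^{F,E}$ but must be verified against the explicit residue. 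Once the identity is in hand, everything else is the same formal manipulation in $A_{\sigma}(\smip)$ as in Lemma \ref{lem-tw-1}.
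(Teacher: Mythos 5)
Your proposal follows essentially the same route as the paper: the paper's proof of Lemma \ref{lem-tw-2} simply rewrites $[U^{F,E}]$ as an explicit triple residue, evaluates it on the twisted modules $M(1,\lambda)\otimes M$ as in Lemma \ref{lem-tw-1}, and applies the constant term identity of Theorem \ref{identity-tstriplet}, whose value $(t-m){t+\frac{1}{2}+m \choose 4m+2}$ (up to a nonzero constant) supplies exactly the extra $[\tau]$ factor together with $r([\omega])*[H]$ and the nonvanishing of $B_m$. The only correction is that the identity needed is Theorem \ref{identity-tstriplet} (a single residue with $z_1^{-3}$) rather than Theorem \ref{tw-nova}, though the shape you describe --- a binomial coefficient times a linear factor in $t$ --- is precisely what that theorem provides.
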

 \begin{proof}
 Everything here is a matter of rewriting the residue explicitly. The rest follows from Theorem \ref{identity-tstriplet}.
  \end{proof}

By using Lemma \ref{lem-tw-1}  (or using Lemma \ref{lem-tw-2} and the fact that $[\tau]$ is a unit in $A_{\sigma}(\striplet)$), we have
 $$r([\omega]) * [H] \in \mbox{Ker}(\Phi). $$

 Therefore,
 \bea && \label{rel-jezgra-t} \mbox{Ker} (\Phi) = A_{\sigma} (\smip) . \{ r([\omega])  * [H] \}. \eea
 Now we are in position to describe Zhu's algebra of $A_{\sigma}(\striplet)$. By using Proposition 6.2 of \cite{AdM-tstriplet} we get:
  $$ A_{\sigma} (\striplet) = A_{\sigma} (\striplet)_{-1} \oplus A_{\sigma} (\striplet) _0  \oplus A_{\sigma} (\striplet)_{1},$$
where
$$ A_{\sigma} (\striplet) _0 = A_{\sigma}(\smip) / \mbox{Ker}(\Phi), $$
$$ A_{\sigma} (\striplet)_{-1} = A_{\sigma} (\striplet) _0 . [F], \quad   A_{\sigma} (\striplet)_{1} = A_{\sigma} (\striplet) _0 . [E]. $$
 \vskip 5mm

 Relation (\ref{rel-jezgra-t}) gives that
 $ \dim A_{\sigma} (\striplet) _0 = 8 m + 4.$ One can also see that
 $$ A_{\sigma} (\striplet)_{1} = \mbox{span}_{\C} \{ r^{\varepsilon}_i([\tau] ) * [E], \ 2m \le i \le 3m , \ \varepsilon = \pm \}, $$
 $$ A_{\sigma} (\striplet)_{-1} = \mbox{span}_{\C} \{ r^{\varepsilon}_i([\tau] ) * [F], \ 2m \le i \le 3m, \ \varepsilon = \pm \}, $$
 where
 $$
r (x ^2 +\tfrac{c_{2m+1,1}}{24} )= r^{\pm}_i(x ) ( x \pm \frac{ 2m - 2 i -1}{\sqrt{8 (2m +1)}}). $$
 Therefore:
 $$ \dim A_{\sigma} (\striplet)_{\pm 1} = 2m +2 . $$
In this way we have proved the following result (conjectured in \cite{AdM-tstriplet}):
 \begin{theorem} We have
 $$\dim A_{\sigma}(\striplet) = 12 m + 8. $$
 \end{theorem}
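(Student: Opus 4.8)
The plan is to assemble the dimension count from the direct-sum decomposition
$$A_{\sigma}(\striplet) = A_{\sigma}(\striplet)_{-1} \oplus A_{\sigma}(\striplet)_0 \oplus A_{\sigma}(\striplet)_1$$
already recorded above, which reduces the problem to computing the dimension of each graded piece. The charge-zero part is $A_{\sigma}(\smip)/\mbox{Ker}(\Phi)$, and by relation (\ref{rel-jezgra-t}) the kernel is the principal ideal $A_{\sigma}(\smip).\{r([\omega])*[H]\}$. So the first step is to compute $\dim A_{\sigma}(\smip)/\mbox{Ker}(\Phi)$ using the explicit presentation $A_{\sigma}(\smip) \cong \C[x,y]/\la H(x,y)\ra$ from the theorem of \cite{AdM-tstriplet}: quotienting out the ideal generated by $r([\omega])*[H]$ (with $[\omega]=[\tau]^2 + c_{2m+1,1}/24$ and $y=[H]$) forces $y$ to satisfy both $y^2 = \widetilde{C}_m(\cdots)$ and a relation of the form $r(x^2+\cdots)\,y = $ (polynomial in $x$), and one reads off that the result has dimension $8m+4$, as asserted. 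I would spell out this elimination: the ideal $\la H(x,y), r([\omega])y\ra$ in $\C[x]\oplus\C[x]y$ cuts the free $\C[x]$-module of rank $2$ down, via the usual Euclidean-algorithm bookkeeping, to a space of total dimension equal to $\deg(\text{resultant})$, which here comes out to $8m+4$ after using that $r$ and $Rf_m/r$ share no common factor with the relevant polynomials.

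Next I would handle the charge $\pm 1$ pieces. By Proposition 6.2 of \cite{AdM-tstriplet}, $A_{\sigma}(\striplet)_{\pm 1}$ is a cyclic $A_{\sigma}(\striplet)_0$-module generated by $[E]$ (resp. $[F]$), and from Lemma \ref{lem-tw-1} together with the fact that $r([\omega])*[H]\in\mbox{Ker}(\Phi)$ one gets the annihilation relations $r([\omega])*[E] = r([\omega])*[F] = 0$ in $A_{\sigma}(\striplet)$. Hence $A_{\sigma}(\striplet)_1$ is a quotient of $A_{\sigma}(\striplet)_0 / r([\omega])A_{\sigma}(\striplet)_0$; factoring $r(x^2+c_{2m+1,1}/24) = r^{\pm}_i(x)(x \pm \tfrac{2m-2i-1}{\sqrt{8(2m+1)}})$ over the variable $[\tau]$ and exhibiting the spanning set $\{r^{\varepsilon}_i([\tau])*[E] : 2m\le i\le 3m,\ \varepsilon=\pm\}$ shows $\dim A_{\sigma}(\striplet)_{\pm 1} \le 2m+2$. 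For the matching lower bound I would invoke the irreducible $\striplet$-modules whose top components have the appropriate conformal weights $h^{2i+2,1}$ (the $\sigma$-twisted sector), so that each $r^{\varepsilon}_i([\tau])*[E]$ acts nontrivially on a suitable module, forcing linear independence; thus $\dim A_{\sigma}(\striplet)_{\pm 1} = 2m+2$.

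Putting the three pieces together yields
$$\dim A_{\sigma}(\striplet) = (2m+2) + (8m+4) + (2m+2) = 12m+8,$$
which is the claim. The main obstacle I anticipate is not the bookkeeping in the charge-zero computation (that is a routine resultant/Euclidean-algorithm count once the presentation is in hand) but rather rigorously establishing the lower bounds on $\dim A_{\sigma}(\striplet)_{\pm 1}$ and on $\dim A_{\sigma}(\striplet)_0$: one must know that all the relations have been found, i.e. that the natural surjections from the spanning sets are in fact isomorphisms. This is where the classification of irreducible (twisted) $\striplet$-modules from \cite{AdM-striplet} and \cite{AdM-tstriplet} is essential — evaluating $o(a)$ on the one- and two-dimensional top components of $S\Lambda(i+1)$ and $S\Pi(i)$ in the Ramond sector provides enough independent linear functionals on $A_{\sigma}(\striplet)$ to certify that no further collapse occurs. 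A secondary subtlety is checking that $[\tau]$ is genuinely a unit in $A_{\sigma}(\striplet)$ (used implicitly to pass between the $[E]$-description and the $[\widehat{E}]$-description and in Lemma \ref{lem-tw-2}), which follows from the superconformal relations but should be stated explicitly.
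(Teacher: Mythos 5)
Your proposal is correct and follows essentially the same route as the paper: it assembles the decomposition $A_{\sigma}(\striplet)=A_{\sigma}(\striplet)_{-1}\oplus A_{\sigma}(\striplet)_0\oplus A_{\sigma}(\striplet)_1$ from Proposition 6.2 of \cite{AdM-tstriplet}, identifies $A_{\sigma}(\striplet)_0\cong A_{\sigma}(\smip)/\mbox{Ker}(\Phi)$ with $\mbox{Ker}(\Phi)$ the principal ideal generated by $r([\omega])*[H]$ (via Lemma \ref{lem-tw-1} and relation (\ref{rel-jezgra-t})), and counts $8m+4$ plus twice $2m+2$. The extra detail you supply (the Euclidean/resultant bookkeeping for the charge-zero count and the module-evaluation argument for linear independence in the charge $\pm 1$ pieces) merely makes explicit what the paper delegates to \cite{AdM-tstriplet}, so no genuinely different idea is involved.
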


\section{The $C_2$--algebra $\mathcal{P}(\striplet)$ }

Now we are in the position to determine $\mathcal{P}(\striplet)$. Firstly, observe that
$$ \dim \mathcal{P}(\striplet) \ge  \dim A_{\sigma}(\striplet) = 12 m + 8. $$
Therefore we only have to prove that
$$  \dim \mathcal{P}(\striplet) \le 12 m + 8. $$
By using results from \cite{AdM-striplet}
we conclude that
  $\mathcal{P}(\striplet)$  is generated by
$$ \overline{\tau}, \overline{\omega}, \overline{E}, \overline{F}, \overline{H}, \overline{\widehat{E}}, \overline{\widehat{F}},  \overline{\widehat{H}}. $$
Also the following relations hold:
$$ \overline{\tau } ^2 = \overline{\omega} ^{3 m +1} = \overline{\widehat{E}} ^2 = \overline{\widehat{F}} ^2= \overline{\widehat{H}} \cdot \overline{\widehat{E}}  = \overline{\widehat{H}} \cdot \overline{\widehat{F}}  =   0,  $$
$$ \overline{X} ^2 = \overline{\tau} \overline{\widehat{X}} = 0,    \overline{\tau} \overline{X} = {\nu}_1 \overline{\widehat{X}}, \ \overline{\widehat{H}} ^2 = {\nu}_2 \overline{\omega} ^{2m+1}, $$
where ${\nu}_1, \nu _2$ are non-zero complex numbers and $X \in \{E, F, H \}$.

Therefore every element $u \in \mathcal{P}(\striplet)$ has the form
\bea
u = && f_1 (\overline{\omega}) + f_2 (\overline{\omega}) \overline{E} + f_3 (\overline{\omega}) \overline{F} + f_4 (\overline{\omega}) \overline{H} + g_1 (\overline{\omega}) \overline{\tau} + g_2 (\overline{\omega}) \overline{\widehat{E}} + g_3 (\overline{\omega}) \overline{\widehat{F}} + g_4 (\overline{\omega}) \overline{\widehat{H}}, \nonumber \eea
for certain polynomials $f_i, g_i \in {\C}[x]$, \ $\deg(f_i), \deg(g_i) \le 3m$, \  $i =1, \cdots, 4$.

By using  Lemma \ref{lem-tw-1} and  Lemma\ \ref{lem-tw-2} we get:
\begin{proposition} \label{str-c2} We have
\bea &&F_{-2}  \widehat{E}\equiv   A_m  L(-2) ^{ m+1} H   + v_1 \quad \mbox{mod} ( C_2( \striplet) ) \qquad (A_m \ne 0), \label{rel-tw-1} \\
&&F_{-3}   {E} \equiv   B_m L(-2) ^{ m+1} \widehat{H } + v_2 \quad \mbox{mod} ( C_2( \striplet) ) \qquad (B_m \ne 0), \label{rel-tw-2} \eea
where $v_1, v_2 \in U(\NS). {\bf 1}$, $\wt (v_1) = 4 m + 5/2$, $\wt (v_2) = 4m +3$.
\end{proposition}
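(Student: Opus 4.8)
The plan is to produce the two congruences of Proposition \ref{str-c2} by lifting, via the vertex operators, the relations that we already control inside the twisted Zhu's algebra $A_{\sigma}(\smip)$. First I would observe that the vectors $U^{F,\widehat E}$ and $U^{F,E}$ are by definition particular residues of $Y(F,z)\widehat E$ and $Y(F,z)E$, hence concrete elements of $\smip$ of conformal weights $4m+5/2$ and $4m+3$ respectively. Expanding the residues in modes gives
$$U^{F,\widehat E}=\sum_{k\ge 0}\binom{2m+\tfrac12}{k}F_{\,k-2}\,\widehat E,\qquad U^{F,E}=\sum_{k\ge 0}\binom{2m+\tfrac12}{k}F_{\,k-3}\,E,$$
so that the leading term ($k=0$) is exactly $F_{-2}\widehat E$, respectively $F_{-3}E$, and every other term has a positive mode of $F$ applied. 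The point of passing to the $C_2$ quotient is that all terms $F_{\,j}\,(\cdot)$ with $j\ge -1$ lie in $C_2(\striplet)$, since $F_{-1}\widehat E = \overline{F\cdot\widehat E}$-type products and $F_{\,j}$ with $j\ge 0$ are even easier; more precisely $F_{\,j}v\in C_2(\striplet)$ for $j\ge -1$ by the standard fact that $C_2(V)$ is spanned by $a_{-n}b$ with $n\ge 2$ together with the translation property. Hence
$$F_{-2}\widehat E\equiv U^{F,\widehat E}\ \ (\mathrm{mod}\ C_2(\striplet)),\qquad F_{-3}E\equiv U^{F,E}\ \ (\mathrm{mod}\ C_2(\striplet)).$$

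Next I would translate Lemma \ref{lem-tw-1} and Lemma \ref{lem-tw-2} from the Zhu-algebra statement back to a vertex-algebra statement. Those lemmas say $[U^{F,\widehat E}]=A_m\,r([\omega])\ast[H]$ and $[U^{F,E}]=B_m\,r([\omega])\ast[\tau]\ast[H]$ in $A_{\sigma}(\smip)$. Now $r([\omega])\ast[H]$ is the class of $L(-2)^{m+1}H$ plus lower-weight Virasoro descendants of $H$ up to a nonzero scalar (writing $r$ as a monic degree-$(m+1)$ polynomial and using that in the appropriate filtered sense $[\omega]\ast$ raises weight by $2$, with top symbol $L(-2)$); similarly $r([\omega])\ast[\tau]\ast[H]$ has top symbol a nonzero multiple of $L(-2)^{m+1}\widehat H$, since $[\tau]\ast[H]$ has top symbol $G(-3/2)H$ which differs from $\widehat H=G(-1/2)H$ only by Virasoro action and a scalar. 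Because $C_2(V)\supseteq O_{\sigma}(V)$-relevant pieces are again controlled by the filtration, a class being $A_m\,r([\omega])\ast[H]$ in $A_{\sigma}(\smip)$ forces $U^{F,\widehat E}\equiv A_m L(-2)^{m+1}H + v_1$ with $v_1\in U(\NS).{\bf 1}$ of the correct weight $4m+5/2$, and likewise for the second relation with $v_2$ of weight $4m+3$; here "$+v_1$" simply absorbs the Virasoro descendants of $H$ of weight below that of $L(-2)^{m+1}H$, all of which lie in $U(\NS).{\bf 1}$ as claimed (note $H\in U(\NS).{\bf 1}$). Combining with the previous paragraph yields the two displayed congruences.

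I expect the main obstacle to be bookkeeping the passage between the three quotients $A_{\sigma}(\smip)$, the $C_2$-quotient $\mathcal P(\striplet)$, and the honest vertex algebra — specifically, making precise the claim that "top symbol" information in $A_{\sigma}(\smip)$ determines the $C_2$-class up to lower-order Virasoro descendants. The clean way to handle this is to work on $\smip\subset\striplet$, decompose $U^{F,\widehat E}$ and $U^{F,E}$ in the PBW basis of $U(\NS).{\bf 1}$ provided by the singlet structure (Theorem \ref{zhu-alg-n1} and the results of \cite{AdM-striplet}), argue that the Zhu-algebra identities of Lemmas \ref{lem-tw-1}--\ref{lem-tw-2} pin down the coefficient of the highest-weight generator $L(-2)^{m+1}H$ (resp.\ $L(-2)^{m+1}\widehat H$) to be the stated nonzero $A_m$ (resp.\ $B_m$), and then reinterpret all remaining PBW terms as an element $v_1$ (resp.\ $v_2$) of $U(\NS).{\bf 1}$ of the asserted weight. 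The nonvanishing of $A_m$ and $B_m$ is exactly what Lemmas \ref{lem-tw-1} and \ref{lem-tw-2} already deliver, so no new constant-term identity is needed here; everything reduces to the identities \ref{tw-nova} and \ref{identity-tstriplet} invoked there.
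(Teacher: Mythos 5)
Your final paragraph converges to what the paper actually does — the paper's proof is just: $F_{-2}\widehat{E}$ is an odd vector of $\smip$ of conformal weight $4m+5/2$, so the decomposition $\smip=\bigoplus_{n\ge 0}U(\NS)Q^n e^{-n\a}$ together with weight and parity considerations forces the shape $A\,L(-2)^{m+1}H+v_1+(\mbox{element of }C_2(\striplet))$, and the relation $[U^{F,\widehat{E}}]=A_m r([\omega])*[H]$ of Lemma \ref{lem-tw-1} then identifies $A=A_m\neq 0$ (and likewise for (\ref{rel-tw-2}) via Lemma \ref{lem-tw-2}). However, your opening reduction contains a genuine error: it is false that $F_j v\in C_2(\striplet)$ for all $j\ge -1$. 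The space $C_2(V)$ is spanned by $a_{-n}b$ with $n\ge 2$ only; the modes $a_{-1}b$ and $a_0b$ descend precisely to the commutative product and the Poisson bracket on $\mathcal{P}(V)$, which are in general nonzero — in the triplet case the paper itself uses $\overline{E}\cdot\overline{F}=-\nu\,\overline{\omega}^{2p-1}\neq 0$, i.e. $E_{-1}F\notin C_2(\triplet)$ — and even $a_nb$ with $n\ge 1$ need not lie in $C_2(V)$ (e.g. $\omega_3\omega=\tfrac{c}{2}{\bf 1}$). Hence your claimed congruence $F_{-2}\widehat{E}\equiv U^{F,\widehat{E}} \ (\mathrm{mod}\ C_2(\striplet))$ is unjustified and, since the lower-weight modes $F_{k-2}\widehat{E}$, $k\ge 1$, have no reason to vanish in $\mathcal{P}(\striplet)$, false in general.

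Fortunately that step is also unnecessary: $C_2(\striplet)$ is a graded subspace and both sides of (\ref{rel-tw-1}) are homogeneous of weight $4m+5/2$, so one should argue directly with the homogeneous vector $F_{-2}\widehat{E}$ rather than with the inhomogeneous residue $U^{F,\widehat{E}}$. The remaining transfer — that the identity in $A_{\sigma}(\smip)$ pins down the coefficient of $L(-2)^{m+1}H$ — does need the filtration/leading-term argument you only sketch: the lower modes $F_{k-2}\widehat{E}$ contribute to $[U^{F,\widehat{E}}]$ only in strictly lower filtration degree, so the top-degree part $A_m[\omega]^{m+1}*[H]$ can come only from the weight-$(4m+5/2)$ component $F_{-2}\widehat{E}$, and within its PBW expansion only from the term $L(-2)^{m+1}H$; this is the precise content behind "Lemma \ref{lem-tw-1} gives $A=A_m\neq 0$". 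Your heuristic that $[\tau]*[H]$ has "top symbol $G(-3/2)H$ differing from $\widehat{H}$ by a scalar" is not correct as stated and is not needed once the argument is run on the homogeneous component. So: the key idea (coefficient identification via Lemmas \ref{lem-tw-1}--\ref{lem-tw-2} and the $\NS$-module structure of $\smip$) matches the paper, but the proof as written starts from a false $C_2$-membership claim and must be repaired along the lines above.
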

\begin{proof}
Let us prove relation (\ref{rel-tw-1}).   First we noticed  $F_{-2} \widehat{E}$  is an odd vector in $\smip$ of conformal weight $4m+5/2$. This easily implies that $F_{-2} \widehat{E}$ has the form
 $$ F_{-2}  \widehat{E} =   A  L(-2) ^{ m+1} H  + v' $$
 for certain $A \in \C$ and $v' \in  C_2( \striplet) )$.
 Then relation in Zhu's algebra $A_{\sigma}(\smip)$  from Lemma \ref{lem-tw-1} gives that $A= A_m \ne 0$.

Relation  (\ref{rel-tw-2}) follows  from  Lemma \ref{lem-tw-2} in the same way.
\end{proof}

By using Proposition \ref{str-c2} and the action of the operator $Q$ we get:
$$\overline{\omega} ^{m+1} \overline{X}   =   \overline{\omega} ^{m+1} \overline{\widehat{X}} = 0 \  \quad X \in \{E, F, H\},$$
\vskip 5mm
The analysis above   implies that $\dim \mathcal{P}(\striplet) \le 12 m + 8.$ In this way we have proved the following result:

\begin{theorem} We have
$$\dim \mathcal{P}(\striplet) = \dim A_{\sigma}(\striplet).$$
\end{theorem}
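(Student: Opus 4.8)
The plan is to establish the two inequalities $\dim \mathcal{P}(\striplet) \ge \dim A_{\sigma}(\striplet)$ and $\dim \mathcal{P}(\striplet) \le \dim A_{\sigma}(\striplet)$ separately, and then note that both equal $12m+8$ by the preceding results. The lower bound is immediate: by the proposition asserting a natural surjective superalgebra map $\mathcal{P}(V) \to \mathrm{gr}(A_{\sigma}(V))$ applied to $V = \striplet$, we get $\dim \mathcal{P}(\striplet) \ge \dim \mathrm{gr}(A_{\sigma}(\striplet)) = \dim A_{\sigma}(\striplet) = 12m+8$, where the last equality is the theorem just proved. So the entire content is the reverse inequality $\dim \mathcal{P}(\striplet) \le 12m+8$.

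For the upper bound, I would proceed as follows. First, using the structural results from \cite{AdM-striplet}, identify a generating set for the commutative Poisson (super)algebra $\mathcal{P}(\striplet)$, namely the images $\overline{\tau}, \overline{\omega}, \overline{E}, \overline{F}, \overline{H}, \overline{\widehat{E}}, \overline{\widehat{F}}, \overline{\widehat{H}}$, together with the ``easy'' relations coming directly from $C_2$-cofiniteness and the superalgebra structure: $\overline{\tau}^2 = \overline{\omega}^{3m+1} = \overline{\widehat{E}}^2 = \overline{\widehat{F}}^2 = \overline{\widehat{H}}\cdot\overline{\widehat{E}} = \overline{\widehat{H}}\cdot\overline{\widehat{F}} = 0$, and $\overline{X}^2 = \overline{\tau}\,\overline{\widehat{X}} = 0$, $\overline{\tau}\,\overline{X} = \nu_1 \overline{\widehat{X}}$, $\overline{\widehat{H}}^2 = \nu_2 \overline{\omega}^{2m+1}$ for $X \in \{E,F,H\}$ with $\nu_1,\nu_2 \ne 0$. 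These relations already force every element into the normal form
$$
u = f_1(\overline{\omega}) + f_2(\overline{\omega})\overline{E} + f_3(\overline{\omega})\overline{F} + f_4(\overline{\omega})\overline{H} + g_1(\overline{\omega})\overline{\tau} + g_2(\overline{\omega})\overline{\widehat{E}} + g_3(\overline{\omega})\overline{\widehat{F}} + g_4(\overline{\omega})\overline{\widehat{H}}
$$
with $\deg f_i, \deg g_i \le 3m$, which a priori only gives $\dim \mathcal{P}(\striplet) \le 8(3m+1)$. To sharpen this, I would invoke Proposition \ref{str-c2}: the relations $F_{-2}\widehat{E} \equiv A_m L(-2)^{m+1}H + v_1$ and $F_{-3}E \equiv B_m L(-2)^{m+1}\widehat{H} + v_2$ modulo $C_2(\striplet)$ (with $A_m, B_m \ne 0$, and $v_1, v_2 \in U(\NS).\mathbf{1}$). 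Passing to $\mathcal{P}(\striplet)$, the left-hand sides lie in $C_2$ while the terms $v_1, v_2$ live in the subalgebra generated by $\overline{\tau},\overline{\omega}$, and a degree count combined with the relation $\overline{\tau}^2 = 0$ shows they contribute only to $\overline{\omega}^{m+1}$-multiples (and products with $\overline{\tau}$); this yields the additional relations $\overline{\omega}^{m+1}\overline{X} = \overline{\omega}^{m+1}\overline{\widehat{X}} = 0$ for $X \in \{E,F,H\}$, cutting the degree bounds on $f_2,f_3,f_4,g_2,g_3,g_4$ down to $\le m$.

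Finally I would assemble the count. With $\deg f_1 \le 3m$ (contributing $3m+1$), $\deg g_1 \le 3m$ (contributing $3m+1$), and $\deg f_i \le m$ for $i=2,3,4$ and $\deg g_i \le m$ for $i=2,3,4$ (each contributing $m+1$, for $6(m+1)$ total), the bound becomes $\dim \mathcal{P}(\striplet) \le (3m+1) + (3m+1) + 6(m+1) = 12m+8$. Combined with the lower bound from the surjection onto $\mathrm{gr}(A_{\sigma}(\striplet))$, this forces $\dim \mathcal{P}(\striplet) = \dim A_{\sigma}(\striplet) = 12m+8$, completing the proof. The main obstacle is verifying that the ``easy'' relations together with those of Proposition \ref{str-c2} really do exhaust all relations at the relevant degrees — i.e. that no further collapse occurs and the normal-form count is tight; this is precisely where one needs the detailed structure of $C_2(\striplet)$ from \cite{AdM-striplet} and the nonvanishing of the constants $A_m, B_m, \nu_1, \nu_2$ (the latter ultimately traceable to the constant-term identities of Section \ref{...}), which guarantee the stated relations are genuinely there and genuinely no stronger.
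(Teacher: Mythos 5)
Your proposal is correct and follows essentially the same route as the paper: the lower bound via the surjection $\mathcal{P}(\striplet) \twoheadrightarrow \mathrm{gr}(A_{\sigma}(\striplet))$ together with $\dim A_{\sigma}(\striplet)=12m+8$, and the upper bound via the generators and relations of $\mathcal{P}(\striplet)$ from \cite{AdM-striplet} sharpened by Proposition \ref{str-c2} (and the screening operator $Q$) to get $\overline{\omega}^{m+1}\overline{X}=\overline{\omega}^{m+1}\overline{\widehat{X}}=0$, giving the count $(3m+1)+(3m+1)+6(m+1)=12m+8$. One small remark: your final worry about the relations being exhaustive is unnecessary, since tightness of the spanning-set count is forced automatically by the lower bound.
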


From the description of $\mathcal{P}(\striplet)$ we see ${\rm dim} \ \mathcal{P}_0(\striplet) =(3m+1)+3(m+1)=6m+4$.
Consequently:
\begin{corollary} For all $m \in \mathbb{N}$,
\be
\dim A(\striplet) < \dim \mathcal{P}_0(\striplet).
\ee
\end{corollary}

\section{Constant term identities}

In this section, which is mostly of  combinatorial nature, we obtain constant (or residue) term identities needed in the paper.
We are interested in certain multiple sums, which after several steps reduce to a single sum. Although we only need non-vanishing
condition for these sums, we in fact provide closed evaluation expression which is of independent interest. The main tool is
Wilf-Zeilberger (WZ) theory of summation elaborated in more details in the appendix.

As usual, for $t \in \C$ and $k \in \N$ we set
$${ t \choose k} = \frac{t (t-1) \cdots (t-k+1)}{ k!}. $$
Define also
$$H_p(t)={2p \choose p}{2p-2 \choose p-1}{t+p \choose 4p-1}.$$

\begin{theorem} \label{identity-triplet}
Let

\be \label{known}
G_p(t)=o(E \circ F)v_{\lambda}=\sum_{i \geq 0} {2p-1 \choose i}  o(E_{-2+i} F ) v_{\lambda}=G_p(t)v_{\lambda},
\ee
where we view $t$ as a formal variable.
Then

$$G_p(t)={\rm Res}_{x_1,x_2,x_3} \frac{1}{(x_1 x_2 x_3)^{2p}} \frac{(x_2-x_3)^{2p}}{(1-x_2/x_1)^{2p}(1-x_3/x_1)^{2p}}\frac{(1+x_1)^{2p-1-t}}{x_1^2}(1+x_2)^{t}(1+x_3)^{t},$$
and
\be \label{more-imp}
G_p(t)=H_p(t).
\ee
\end{theorem}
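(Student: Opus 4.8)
The claim splits naturally into two parts: first, that $G_p(t)$ equals the stated triple residue expression; second, that this triple residue evaluates to $H_p(t)$. The first part is essentially a bookkeeping computation: one starts from the definition $G_p(t) = o(E\circ F)v_\lambda = \sum_{i\ge 0}\binom{2p-1}{i}o(E_{-2+i}F)v_\lambda$, writes out $E\circ F = \operatorname{Res}_x Y(E,x)\frac{(1+x)^{\deg E}}{x^2}F$ with $\deg E = 2p-1$, and then uses the realization of $E, F$ inside the lattice vertex algebra $V_L$ from \cite{A-2003}, \cite{AdM-2007}, \cite{AdM-triplet}. Recall $F = e^{-\alpha}$ and $E = Q^2 e^{-\alpha}$ where $Q = e^\alpha_0$, so $Y(E,x)F$ involves a product of three vertex operators $Y(e^\alpha, z_1)Y(e^\alpha, z_2)Y(e^{-\alpha}, z_3)$ after extracting the residues defining $Q^2$. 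Evaluating $o(\,\cdot\,)$ on the highest weight vector $v_\lambda$ of a singlet module $\overline{M(1)}$-module $M(1,\lambda)$, the Heisenberg part contributes a factor $(1+x_1)^{\langle\alpha,\lambda\rangle}$-type exponent (parametrized by $t$), while the lattice/normal-ordering part produces the factors $(x_1x_2x_3)^{-2p}$, $(x_2-x_3)^{2p}$, $(1-x_2/x_1)^{-2p}$, $(1-x_3/x_1)^{-2p}$ coming from the pairings $\langle\alpha,\alpha\rangle = 2p$ and $\langle\alpha,-\alpha\rangle = -2p$. Organizing the two screening residues $z_1, z_2$ and the operator residue $x = x_1$, relabeling, and carefully tracking the binomial sum $\sum_i \binom{2p-1}{i}$ (which recombines into the $(1+x_1)^{2p-1}$ factor, shifted by $-t$ from the weight), yields the displayed formula. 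This step I would carry out in full but it is routine.

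**The substantive part** is the evaluation $G_p(t) = H_p(t)$, i.e. showing that the triple residue equals $\binom{2p}{p}\binom{2p-2}{p-1}\binom{t+p}{4p-1}$. The strategy is to reduce the triple residue to a manageable sum and apply Wilf--Zeilberger summation. First I would perform the residues one variable at a time: expand $(1-x_2/x_1)^{-2p}$ and $(1-x_3/x_1)^{-2p}$ as binomial series in $x_2/x_1, x_3/x_1$, expand $(x_2-x_3)^{2p}$ by the binomial theorem, and expand the $(1+x_i)$ factors. Taking the residues in $x_2$ and $x_3$ (picking out the appropriate powers to cancel $(x_2x_3)^{-2p}$) collapses two of the three summations via Vandermonde-type identities, leaving a single sum in one index, say $k$, of a hypergeometric term involving $t$. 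Then I would invoke the WZ method: produce (with the help of a computer algebra implementation, as acknowledged — D.~Zeilberger is thanked) a certificate showing that $G_p(t)$ and $H_p(t)$ satisfy the same first-order recurrence in $t$ (or that their ratio is annihilated by the relevant shift operator), and check one initial value, e.g. $t$ at a point where both sides are transparently computed (such as $t = -m$ for suitable small $m$, or $t$ large). Matching recurrence plus one initial condition forces equality as polynomials/rational functions in $t$.

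**The main obstacle** I anticipate is twofold. The bookkeeping obstacle is getting every exponent exactly right in the first reduction — in particular the precise power of $(1+x_1)$, since an off-by-something in the $t$-dependence or in the shift by $2p-1$ would be fatal, and the interplay of the two $Q$-screenings with the operator residue requires care about orderings and which region of convergence / iterated expansion is used (this is standard for such lattice computations but error-prone). The genuinely hard obstacle is the constant term evaluation itself: even after reducing to a single sum, identifying the closed form $\binom{t+p}{4p-1}$ (rather than merely proving non-vanishing) requires either spotting the right hypergeometric identity or running WZ carefully, and the appearance of the product $\binom{2p}{p}\binom{2p-2}{p-1}$ as the leading constant suggests the intermediate Vandermonde collapses are somewhat delicate. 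Since the paper only strictly needs $G_p(t) \ne 0$ as a polynomial identity (to conclude $\nu = \nu_1 \ne 0$ and to pin down the ideal $p([\omega])*[H]$), a fallback is to compute the degree of $G_p(t)$ in $t$ and its leading coefficient directly from the residue — the leading term in $t$ comes from taking the top power of $t$ in each $(1+x_i)^t$ factor, reducing to an elementary constant term — and verify it is nonzero, which suffices for all applications in Sections 4--5; but for the stated theorem one wants the full evaluation, so I would push the WZ argument through, deferring its certificate details to the appendix as the paper indicates.
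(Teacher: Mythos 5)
Your treatment of the first claim (the triple-residue formula for $G_p(t)$) is fine in spirit; the paper simply cites \cite{AdM-triplet} for it. The gap is in the substantive part, $G_p(t)=H_p(t)$. Your plan is to expand the residue, collapse it "via Vandermonde-type identities" to a single hypergeometric sum in one index with parameter $t$, and then run creative telescoping in $t$ to show $G_p$ and $H_p$ satisfy the same first-order recurrence. Two problems. First, the residues in $x_2,x_3,x_1$ impose three constraints on the (roughly six) expansion indices, so what comes out is naturally a \emph{triple} sum, namely
$$\sum_{(i,j,k)} {2p \choose i}{-2p \choose j}{-2p \choose k}{t \choose 2p-1-i-j}{t \choose i-1-k}{2p-1-t \choose 2p+j+k+1},$$
and the inner sums over $j$ or $k$ couple three $t$-dependent binomials; they are not Vandermonde sums and do not telescope to a closed form, so the promised reduction to a single hypergeometric-term sum is unsubstantiated. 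Second, even granting some reduction, the route you propose is precisely the one the authors report as failing: they remark explicitly that WZ theory could not be made to work on this sum "even after we simplify \ldots to a single sum involving generalized hypergeometric series." So the recurrence-in-$t$ certificate you are counting on is not available by the standard algorithms, and your fallback (leading coefficient, non-vanishing only) proves less than the stated theorem.

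What actually makes the paper's proof work is a different mechanism that your proposal is missing: one observes that $G_p(t)$ is a polynomial in $t$ of degree at most $4p-1$, then locates $4p-1$ integer zeros of it, namely $t\in\{0,\dots,2p-1\}$ and $t\in\{-p,\dots,-1\}$ by direct power counting in the residue, and $t\in\{2p,\dots,3p-2\}$ by the skew-symmetry $G_p(t)=(-1)^pG_p(2p-2-t)$, which is obtained from the substitution $y_i=x_i/(1+x_i)$ in the residue. This forces $G_p(t)=c\,{t+p \choose 4p-1}$, and the constant $c$ is pinned down by computing the derivative at $t=0$: extracting the linear term in $t$ from the triple sum reduces it to a single sum depending on $p$ alone, $f(p)=\sum_{i=1}^{2p-1}(-1)^i{2p\choose i}{-2p\choose 2p-1-i}{-2p\choose i-1}$, and it is \emph{this} pure $p$-sum (not anything involving $t$) that Zeilberger's algorithm handles, via a first-order recurrence in $p$. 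In short, WZ enters only after the $t$-dependence has been eliminated by the zero-counting and symmetry argument; without that idea your proof cannot be completed as outlined. (A small additional caution: your "check one initial value" step would have to be done at a point where $H_p$ does not vanish, and evaluating the residue there directly is itself nontrivial, whereas matching the first derivative at $t=0$ is what reduces cleanly.)
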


{\em Proof.}

Easy inspections shows that $G_p(t)$ is a polynomial in $t$.

Formula (\ref{known}) has already been proven in \cite{AdM-triplet}, so we only focus on (\ref{more-imp}).
The proof is divided into 5 steps:

In Step 1 we prove $G_p(t)$ vanishes at $t \in \{0,\dots ,2p-1 \}$.

In Step 2 we prove  $G(t)=(-1)^p G(2p-2-t)$ (the skew-symmetry).

In Step 3 we prove  $G_p(t)$ vanishes at $t \in \{ -p,\dots ,-1 \}$.

In Step 4 we apply skew-symmetry to show that $G_p(t)$ vanishes at $ t \in \{ 3p-2, \dots ,2p \}$.
Steps 1-4 imply that $G_p(t)$ is divisible by ${t+p \choose 4p-1}$.

In Step 5 we show that
$$H'(0)=G'(0) \neq 0,$$
that is $H(t)$ and $G(t)$ have the same first derivative at zero.

{\em Step 1.} This is an easy observation, which follows simply from
consideration of $Res_{x_1} G_p(t)$. For every $t=i \in \{1,...,2p-1 \}$ the highest positive powers of $x_1$ appearing in the Laurent expansion of $G_p(t)$ is at most ${2p-1-i}$, coming
from the expansion of $(1+x_1)^{2p-1-i}$. Contribution from other terms containing $x_1$ is  $x_1^{-2p-2-j}$, where $j \geq 0$. Thus $G_p(i)=0$. If $t=i=0$ one easily sees that the constant term is zero.

{\em Step 2.} Apply the substitution $y_i=\frac{x_i}{1+x_i}$ and the general  formula
$${\rm Res}_{y_i} F(y_i)={\rm Res}_{x_i} y'(x_i)F(y(x_i)),$$
where $y_i = x_i+\cdots \in x_i {\C}[[x_i]].$

{\em Step 3.} This part is more tricky because for  $t=-1,-2,..,-p$ the terms $(1+x_i)^t$ have an infinite power expansion in $x_i$. So let $-t=k \in \{1,...,p \}$. We clearly have
$$ \frac{1}{(x_1 x_2 x_3)^{2p}} \frac{((1+x_2)-(1+x_3))^{2p}}{(1-x_2/x_1)^{2p}(1-x_3/x_1)^{2p}}\frac{(1+x_1)^{2p-1+k}}{x_1^2}(1+x_2)^{-k}(1+x_3)^{-k},$$
\be \label{rat}
=\sum_{i+j=2p} (-1)^j {2p \choose i} \frac{1}{(x_1 x_2 x_3)^{2p} (1-x_2/x_1)^{2p}(1-x_3/x_1)^{2p}}\frac{(1+x_1)^{2p-1-t}}{x_1^2}\frac{(1+x_2)^i}{(1+x_2)^k}\frac{(1+x_2)^j}{(1+x_3)^{k}},
\ee

Now, for every $k$ in the range we either have  $$i \geq k, \ \ {\rm or} \ \ j \geq k,$$ (otherwise $2p=i+j < 2k$, contradicting our choice of $k$). If $i \geq k$ we will consider ${\rm Res}_{x_2}$ of (\ref{rat})
(if $j \geq k$ we consider ${\rm Res}_{x_3}$ instead and the argument follows verbatim). In the expression $(1+x_2)^{i-k}$ the highest power of $x_2$ is clearly $i-k$. In addition we already have $x_2^{-2p}$ contribution, so the highest power
of $x_2$ we get from these two terms is $x_2^{-2p+i-k}$. For  ${\rm Res}_{x_2}$ to be nontrivial we must have additional  $(2p+k-i-1)$ powers of $x_2$. This can come only from the expansion of $(1-x_2/x_1)^{-2p}$, meaning
that we need term $(x_2/x_1)^{2p+k-i-1}$ in its expansion (and higher powers). Now, we consider ${\rm Res}_{x_1}$. We already have in  (\ref{rat}) the factor $\frac{1}{x_2^{2p+2}}$, so we have $\frac{1}{x_1^{4p+2+k-i-1}}$
 as the term with the highest power of $x_1$. The highest positive power of $x_1$ is $(2p-1+k)$, which comes from expansion of $(1+x_1)^{2p-1+k}$. Since we are taking ${\rm Res}_{x_1}$ we see that
 $$2p-1+k-(4p+1+k-i)=-2p-2+i,$$
which is always $\leq -2$, thus gives the zero contribution to ${\rm Res}_{x_1}$, and $G_p(k)=0$.

{\em Step 4.} Since $G_p(t)=(-1)^p G_{p}(2p-2-t)$, Step 3 gives $G_p(i)=0$, for $i=2p,...,3p-2$.

{\em Step 5.} Here, it  is convenient to rewrite $G_p(t)$ as
\be \label{full-sum}
\sum_{(i,j,k)=(1,0,0)}^{(2p-1,2p-i-1,i-1)} {2p \choose i} {-2p \choose j}{-2p \choose k} {t \choose 2p-1-i-j}{t \choose i-1-k}{2p-1-t \choose 2p+j+k+1}.
\ee
We have to determine the linear coefficients in $G_p(t)$ (as we already know the constant term is zero). Observe that
$${2p-1-t \choose 2p+j+k+1} \in \lambda_1 t+ \cdots \in t \mathbb{C}[t],$$
where $\lambda_1 \neq 0$ for all $j$ and $k$!  Similarly, for $2p-1-i-j \neq 0$ and $i-1-k \neq 0$ we also
have  $${t \choose 2p-i-j-1} \in \nu_1 t + \cdots ,  \ \ \nu_1 \neq 0$$ and
$${t \choose i-1-k} \in \epsilon_1 t+ \cdots , \ \ \epsilon_1 \neq 0,$$
and trivially ${t \choose 0}=1$, if $2p-1-i-j=0$ or $i-1-k=0$. In conclusion, to extract the linear term from $G_p(t)$, it is sufficient to consider the case
\be \label{range}
2p-1-i-j=0, \ \ i-1-k=0.
\ee
With this choice
$${2p-1-t \choose 2p+j+k+1}=\frac{(2p-1)!(2p-1)!}{(4p-1)!}t + \cdots,$$
where dots denote the higher powers of $t$. For $j$ and $k$ subject to (\ref{range}), we have $j=2p-i-1$ and $k=i-1$, so we
 now have
$$G_p(t)= \frac{(2p-1)!^2}{(4p-1)!} t \left( \sum_{i=1}^{2p-1} (-1)^i {2p \choose i} {-2p \choose 2p-1-i} {-2p \choose i-1} \right) + \cdots,$$
where again the dots denote the higher order terms.
The sum in the parenthesis, denoted by $f(p)$, can be evaluated via Zeilberger's algorithm (cf. Appendix). We get
$$3 (3 p - 4) (2 p - 3) (3 p - 2) f(p - 1) + f(p) (2 p - 1) (p - 1)^2=0, \ \ p \geq 2$$
The last formula yields  (after iteration) $$f(p)=(-1)^p \frac{2(3p-2)!}{(2p-1)(p-1)!^3},$$
so we finally have
$$G_p(t)= (-1)^p \frac{(2p-1)!^2}{(4p-1)!}\frac{2(3p-2)!}{(2p-1)(p-1)!^3} t + \cdots=(-1)^p \frac{(2p)! (2p-2)! (3p-2)!}{p! (p-1)!^2 (4p-1)!}t+ \cdots,$$
But this coefficient is precisely the linear coefficients of
$$H_p(t)=(-1)^p {2p \choose p} {2p-2 \choose p-1} \frac{(3p-2)! p!}{(4p-1)!} t +\cdots,$$
and $H'(0)=G'(0)$.

\qed

\begin{remark}
1. It is tempting to ask whether WZ-theory  can be applied directly to (\ref{full-sum}).
The short answer seems to be "no", or at least we couldn't make it work even after we simplify (\ref{full-sum}) to a single sum involving
generalized hypergometric series.

2. There seems to be another degree of freedom in the formula for $f(p)$. We can for instance show that for $k \geq 1$, $p \geq k$:
$$ \sum_{i=k}^{2p-k} (-1)^i {2p \choose i} {-2p \choose 2p-k-i} {-2p \choose i-k} =\frac{2(-1)^p (3p-1-k)!}{ (2p-k) (p-1)!^2 (p-k)!}.$$
Our formula for $f(p)$ is obtained by specializing $k=1$.

\end{remark}



\begin{theorem} \label{identity-striplet}
The residue
$$\sum_{i=0}^{2m}  {\rm Res}_{z_1,z_2,z_3} \frac{(1+z_1)^{2m}}{z_1}z_2^{-i-1}z_3^i (z_1 z_2 z_3)^{-2m-1}$$
$$ \cdot (1-z_2/z_1)^{-2m-1}(1-z_3/z_1)^{-2m-1}(z_2-z_3)^{2m+1}
\frac{(1+z_2)^t(1+z_3)^t}{(1+z_1)^t}$$
equals
$$-{2m \choose m}^2 {t+m \choose 4m+1}.$$
\end{theorem}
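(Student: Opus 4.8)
### Proof proposal for Theorem \ref{identity-striplet}

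The plan is to follow the same five-step strategy that worked for Theorem \ref{identity-triplet}: show that the residue, viewed as a polynomial $S_m(t)$ in $t$, is divisible by the binomial $\binom{t+m}{4m+1}$ by locating $4m+1$ integer roots, and then pin down the overall constant by comparing a single coefficient (the linear one). First I would carry out the inner summation over $i$: the sum $\sum_{i=0}^{2m} z_2^{-i-1}z_3^i$ telescopes/collapses against the factor $(z_2-z_3)^{2m+1}$, and in fact $\sum_{i=0}^{2m} z_2^{-i-1}z_3^i (z_2-z_3) = z_2^{-1}-z_3^{2m}z_2^{-2m-1}$ up to sign, so the triple residue simplifies to a cleaner expression with $(z_2-z_3)^{2m}$ and a two-term bracket. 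This reduction is the analogue of passing from $E\circ F$ to the compact residue in (\ref{known}), and it should make the symmetry structure visible.

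Next I would establish the vanishing of $S_m(t)$ at the requisite integer points. For $t \in \{0,1,\dots,2m\}$ the argument is the ``degree counting in $\mathrm{Res}_{z_1}$'' argument of Step 1 of Theorem \ref{identity-triplet}: the positive powers of $z_1$ coming from $(1+z_1)^{2m-t}$ cannot compensate the negative powers $z_1^{-2m-2-\cdots}$ present in the rest, so $\mathrm{Res}_{z_1}$ is zero. Then I would prove a skew-symmetry relation of the shape $S_m(t) = (-1)^{m+1} S_m(2m-1-t)$ (the exact reflection point and sign to be determined from the substitution $y_j = z_j/(1+z_j)$, exactly as in Step 2), and use it together with the analogue of Step 3 — where for negative $t = -k$, $k \in \{1,\dots,m\}$, one splits $((1+z_2)-(1+z_3))^{2m}$ binomially and does the $\mathrm{Res}_{z_2}$ or $\mathrm{Res}_{z_3}$ degree count — to get roots at $t \in \{-m,\dots,-1\}$ and hence, by reflection, at the remaining $m$ points near $3m$. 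Altogether this gives $4m+1$ roots, forcing $S_m(t) = c_m \binom{t+m}{4m+1}$ for a constant $c_m$.

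Finally I would determine $c_m$ by extracting the coefficient of $t$ (the constant term being $0$), rewriting $S_m(t)$ as an explicit finite sum of products of binomial coefficients in $t$ à la (\ref{full-sum}), observing that only the term where the two ``small'' upper binomials collapse to $\binom{t}{0}=1$ contributes to the linear part, and reducing to a single sum $\sum_i (-1)^i \binom{2m}{i}\binom{-2m-1}{\cdots}\binom{-2m-1}{\cdots}$. That sum I would evaluate via Zeilberger's algorithm (Appendix), obtaining a first-order recurrence in $m$ which iterates to a closed form; comparison with the linear coefficient of $-\binom{2m}{m}^2\binom{t+m}{4m+1}$, namely $-\binom{2m}{m}^2 \frac{(3m+1)!\,m!}{(4m+1)!}$ up to sign, closes the argument. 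The main obstacle I anticipate is not any single step but the bookkeeping in Step 3 (the negative-exponent case): getting the binomial split, the choice of which residue to take, and the exponent inequalities all correct simultaneously is delicate, and the precise reflection point and sign in the skew-symmetry must be nailed down there before Step 4 can be invoked. A secondary risk is that the Zeilberger recurrence for the relevant single sum may not be first-order; if it is higher-order, one needs enough initial values (small $m$) to identify the solution, which is routine but must be checked.
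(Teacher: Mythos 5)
Your proposal follows essentially the same route as the paper's proof: collapse the $i$-sum against one factor of $(z_2-z_3)$ (the leftover telescoped term, which is $1-z_2^{-2m-1}z_3^{2m+1}$ rather than your $z_2^{-1}-z_3^{2m}z_2^{-2m-1}$, has no negative powers of $z_3$ in its second piece and so drops, giving exactly the paper's compact residue with $(z_2-z_3)^{2m}$), locate the $4m+1$ integer roots by the Step 1--4 arguments to force divisibility by ${t+m \choose 4m+1}$, and fix the constant by matching the linear coefficient via a single sum $\sum_{i=0}^{2m}(-1)^i{-2m-1 \choose i}{-2m-1 \choose 2m-i}{2m \choose i}$ evaluated by a first-order Zeilberger recurrence, which is precisely the paper's $Sum(m)$. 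The only details to nail down are the ones you flagged: the reflection is $t\mapsto 2m-t$ with sign $-1$ (matching the skew-symmetry of ${t+m \choose 4m+1}$ about $t=m$), not $2m-1-t$, and the linear coefficient of ${t+m \choose 4m+1}$ involves $(3m)!$ rather than $(3m+1)!$.
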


\begin{proof}
The proof is analogous to the previous theorem so we omit some details. Let  $\tilde{H}_m(t)=-{2m \choose m}^2 {t+m \choose 4m+1}.$
Observe first that the sum  in question can be rewritten more compactly as
$${\rm Res}_{z_1,z_2,z_3} \frac{(1+z_1)^{2m}}{z_1} (z_1 z_2 z_3)^{-2m-1} \cdot$$
$$ \cdot (1-z_2/z_1)^{-2m-1}(1-z_3/z_1)^{-2m-1}(z_2-z_3)^{2m}
\frac{(1+z_2)^t(1+z_3)^t}{(1+z_1)^t},$$
which equals
\be \label{full-3sum}
\tilde{G}_m(t):=\sum_{(i,j,k)=(0,0,0)}^{(2m,2m-i,i)} (-1)^{i+j+k}{-2m-1 \choose k} {-2m-1 \choose j}{2m \choose i} {t \choose i-k}{t \choose 2m-j-i}{2m-t \choose 2m+1+k+j}.
\ee
As in the previous theorem we analyze the roots of $\tilde{G}_m(t)$ and show it is divisible by
$ {t+m \choose 4m+1}$.
Then we write
$$\tilde{H}_m(t)=-\frac{(2m)!^2}{(4m+1)!} t (\sum_{i=0}^{2m} (-1)^i {-2m-1 \choose i} {-2m-1 \choose 2m-i}{2m \choose i}+\cdots).$$
To prove
\be \label{super-rec}
\frac{(2m)!^2}{(4m+1)!}  \sum_{i=0}^{2m} (-1)^i {-2m-1 \choose i}{-2m-1 \choose 2m-i}{2m \choose i}=
{2m \choose m}^2 \frac{m! (-1)^m (3m)!}{(4m+1)!}.
\ee
we denote the sum on the left hand side in (\ref{super-rec}) by $Sum(m)$.  By using Zeilberger's algorithm
we obtain
$$-Sum(m+1)(m+1)^2-3(3m+1)(3m+2)Sum(m)=0,$$
so we get
\be \label{super-rec2}
Sum(m)= \sum_{i=0}^{2m} (-1)^i {-2m-1 \choose i}{-2m-1 \choose 2m-i}{2m \choose i}=\frac{(-1)^m (3m)!}{m!^3},
\ee
which yields the claim (see Appendix for more details).
To finish the proof we only have to argue
$$\tilde{H}'_m(0)=\tilde{G}'_m(0),$$
which is easy to check.
\end{proof}

\begin{theorem} \label{identity-tstriplet}
For $m \in \mathbb{N}$, we have
$${\rm Res}_{z_1,z_2,z_3} \frac{(1+z_1)^{2m+1/2-t}}{z_1 ^3} (z_1 z_2 z_3)^{-2m-1}$$
$$ \cdot (1-z_2/z_1)^{-2m-1}(1-z_3/z_1)^{-2m-1}(z_2-z_3)^{2m}
(1+z_2)^{t+1/2}(1+z_3)^{t-1/2}$$
equals
$$\frac{1}{(4m+3)(2m-1)}{2m \choose m}{2m+1 \choose m}(t-m){t+\frac{1}{2}+m \choose 4m+2}.$$
\end{theorem}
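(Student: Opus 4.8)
The plan is to follow the five-step scheme used for Theorem \ref{identity-triplet} and re-used for Theorem \ref{identity-striplet}. Writing $\widetilde G_m(t)$ for the residue on the left-hand side, I would first expand every factor carrying a negative or a half-integer exponent as a formal series --- $(1-z_2/z_1)^{-2m-1}$, $(1-z_3/z_1)^{-2m-1}$ in $z_2/z_1,z_3/z_1$, and $(1+z_1)^{2m+1/2-t}$, $(1+z_2)^{t+1/2}$, $(1+z_3)^{t-1/2}$, $(z_2-z_3)^{2m}$ in the obvious variables --- turning $\widetilde G_m(t)$ into an explicit triple sum over indices $(i,j,k)$ ranging over finite sets, the summand being a product of four generalized binomial coefficients whose ``numerator'' factors are $\binom{t+1/2}{\cdot}$, $\binom{t-1/2}{\cdot}$ and $\binom{2m+1/2-t}{\cdot}$. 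Inspection of the ranges then shows $\widetilde G_m(t)\in\mathbb{C}[t]$ of degree at most $4m+3$, matching the degree of the right-hand side.

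Next I would locate the roots exactly as in Steps 1 and 3 of the proof of Theorem \ref{identity-triplet}. Taking ${\rm Res}_{z_1}$ first, when $2m+1/2-t$ is a non-negative integer the top power of $z_1$ contributed by $(1+z_1)^{2m+1/2-t}$ cannot reach the pole $z_1^{-3}(z_1z_2z_3)^{-2m-1}(1-z_2/z_1)^{-2m-1}(1-z_3/z_1)^{-2m-1}$, so $\widetilde G_m$ vanishes on a block of half-odd-integers around $0$; and for the deep negative half-integers, where $(1+z_2)^{t+1/2}$, $(1+z_3)^{t-1/2}$ have infinite expansions, I would use the binomial-split trick of display (\ref{rat}), writing $(z_2-z_3)^{2m}=\bigl((1+z_2)-(1+z_3)\bigr)^{2m}$ and taking ${\rm Res}_{z_2}$ or ${\rm Res}_{z_3}$ according to which of the two resulting exponents is large enough, to obtain the remaining low roots. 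The substitution $z_i\mapsto z_i/(1+z_i)$, handled via ${\rm Res}_{w_i}F={\rm Res}_{z_i}z_i'(w_i)F$ as in Step 2 of that proof, should then yield a functional equation relating $\widetilde G_m(t)$ to a reflected copy; matching the known symmetry of the right-hand side, it must take the form $\widetilde G_m(t)=-\widetilde G_m(2m-t)$, and reflecting the low roots through $t\mapsto 2m-t$ supplies the high roots while the sign $-1$ simultaneously forces $\widetilde G_m(m)=0$. Altogether $(t-m)\binom{t+1/2+m}{4m+2}$ divides $\widetilde G_m(t)$.

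It then remains to match leading coefficients, both sides being polynomials of degree $\le 4m+3$ sharing a common factor of that exact degree. As in the passage from (\ref{full-sum}) to the evaluation of $f(p)$, only the extremal index choices feed the top-degree monomial, so the leading coefficient of $\widetilde G_m(t)$ collapses to a single sum in $i$ of a product of three half-integer-shifted binomial coefficients; this I would evaluate in closed form by Zeilberger's algorithm, along the lines of (\ref{super-rec})--(\ref{super-rec2}), obtaining a first-order recurrence in $m$ whose iteration produces $\frac{1}{(4m+3)(2m-1)}\binom{2m}{m}\binom{2m+1}{m}$ --- precisely the prefactor of the right-hand side. Comparing leading coefficients then finishes the argument.

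I expect the main obstacle to be bookkeeping rather than anything conceptual. The half-integer exponents mean the ``binomials'' $\binom{2m+1/2-t}{\cdot}$, $\binom{t\pm1/2}{\cdot}$ must be tracked with care --- they are genuinely polynomial in $t$, but the vanishing ranges above and the reflection point $t=m$ shift by halves relative to the integer cases --- and the asymmetry of the factors $(1+z_2)^{t+1/2}$, $(1+z_3)^{t-1/2}$ together with the $z_1^{-3}$ (rather than $z_1^{-2}$) pole breaks the $z_2\leftrightarrow z_3$ symmetry that was implicit in the earlier proofs, so the root count has to be redone rather than quoted. The other delicate point is isolating the precise single-sum identity for the leading coefficient and checking that iterating its recurrence really yields $\frac{1}{(4m+3)(2m-1)}\binom{2m}{m}\binom{2m+1}{m}$ and not something off by a rational constant.
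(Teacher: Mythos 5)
Your overall scheme (show the residue is a polynomial in $t$ of degree at most $4m+3$, locate its zeros via the trivial-vanishing argument, the binomial-split trick for the deep negative half-integers, and the substitution $z_i\mapsto z_i/(1+z_i)$ giving the antisymmetry $F(m,t)=-F(m,2m-t)$, hence $F(m,m)=0$) is exactly the paper's strategy, and your warning that the $z_1^{-3}$ pole and the $(1+z_2)^{t+1/2}$, $(1+z_3)^{t-1/2}$ asymmetry force a fresh root count is well taken. The gap is in your final normalization step. You claim that ``only the extremal index choices feed the top-degree monomial,'' so that the leading coefficient of $\widetilde G_m(t)$ collapses to a single sum amenable to Zeilberger. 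That collapse does not happen here: writing the residue as the triple sum
$$\sum_{j,k,i}(-1)^{j+k+i}\binom{2m+\tfrac12-t}{2m+3+j+k}\binom{-2m-1}{j}\binom{-2m-1}{k}\binom{2m}{i}\binom{t+\tfrac12}{i-k}\binom{t-\tfrac12}{2m-j-i},$$
every nonzero term has $t$-degree $(2m+3+j+k)+(i-k)+(2m-j-i)=4m+3$, independently of $(i,j,k)$, so \emph{all} terms contribute to the coefficient of $t^{4m+3}$ and you are left with a genuine triple sum, not a single one. The collapse mechanism you are importing from Step 5 of Theorem \ref{identity-triplet} works only for a \emph{lowest-order} Taylor coefficient at a common zero, where the vanishing of the individual binomials forces the index constraints.

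The paper's proof repairs exactly this point: after establishing divisibility by $(t-m)\binom{t+\frac12+m}{4m+2}$, it compares first derivatives at the common simple zero $t_0=\tfrac12$ (chosen because the analysis at $t_0=0$ is unpleasant). At $t=\tfrac12$ the factors $\binom{t-\frac12}{2m-j-i}$ and $\binom{2m+\frac12-t}{2m+3+j+k}$ force $2m-j-i=0$ and, because of the $\pm\tfrac12$ shift, the factor $\binom{t+\frac12}{i-k}$ allows $i-k\in\{0,1\}$, so the derivative collapses not to one but to two single sums: $Sum(m)$, already evaluated in (\ref{super-rec2}), and $TSum(m)$ of (\ref{sum-tw2}), which Zeilberger's algorithm shows equals $-\tfrac12 Sum(m)$. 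Matching $F'(m,\tfrac12)$ with $G'(m,\tfrac12)$ then fixes the constant. So replace your leading-coefficient comparison by a derivative comparison at a common simple root (with the two-sum bookkeeping caused by the half-integer shifts), and the rest of your argument goes through.
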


\begin{proof} Compared to the proof of Theorem \ref{identity-striplet} the strategy here is a bit different. We denote the residue by $F(m,t)$ and the product of binomial
coefficients by $G(m,t)$.

As in Step 1 of Theorem \ref{identity-triplet} we first identify trivial half-integer zeros. Then (as in Step 2)
we obtain the symmetry identity
$$F(m,t)=-F(m,2m-t).$$
The last relation also implies $F(m,m)=0$. Another application of the same formula gives the remaining zeros.

In the last step we shall argue $G'(m,t_0)=F'(m,t_0)$ for some $t=t_0$. It turns out that $G'(m,0)$ is hard to
analyze so we choose $t_0=\frac{1}{2}$ instead. Then again, as in Theorem \ref{identity-triplet}, we rewrite first
$$F(m,t)=\sum_{j,k,i} (-1)^{j+k+i} {2m+\frac{1}{2}-t \choose 2m+3+j+k}{-2m-1 \choose j}{-2m-1 \choose k}{2m \choose i}
{t+\frac{1}{2} \choose i-k}{t-\frac{1}{2} \choose 2m-j-i}.$$
To compute $F'(m,t)|_{t=1/2}$ we have to analyze two sums:
\be \label{sum-tw1}
Sum(m)=\sum_{i=0}^{2m} (-1)^i {-2m-1 \choose 2m-i}{-2m-1 \choose i}{2m \choose i}.
\ee
This sum was already computed in (\ref{super-rec2}).
We also need
\be  \label{sum-tw2}
TSum(m)=\sum_{i=0}^{2m-1} (-1)^i {-2m-1 \choose 2m-i-1}{-2m-1 \choose i}{2m \choose i}.
\ee
It is not hard to see by using Zeilberger's algorithm  that $TSum(m)=-\frac{1}{2}Sum(m)$.
Putting everything together we get a closed expression for $F'(m,t)|_{t=1/2}$. It is trivial to see that it also equals $G'(m,t)|_{t=1/2}$.
The proof follows.
\end{proof}
\vskip 5mm
By using similar methods we can also prove the following result.
\begin{theorem} \label{tw-nova}
For $m \in \mathbb{N}$, we have
\item[(i)]

\bea && {\rm Res}_{z_1,z_2,z_3} \frac{(1+z_1)^{2m+1/2-t}}{z_1 ^2 } (z_1 z_2 z_3)^{-2m-1} \nonumber \\
&& \cdot (1-z_2/z_1)^{-2m-1}(1-z_3/z_1)^{-2m-1}(z_2-z_3)^{2m}
(1+z_2)^{t+1/2}(1+z_3)^{t-1/2}=0.
\eea

\item[(ii)]
\bea && {\rm Res}_{z_1,z_2,z_3} \frac{(1+z_1)^{2m+1/2-t}}{z_1} (z_1 z_2 z_3)^{-2m-2} \nonumber \\
&& \cdot (1-z_2/z_1)^{-2m-1}(1-z_3/z_1)^{-2m-1}(z_2-z_3)^{2m+2}
(1+z_2)^{t- 1/2}(1+z_3)^{t-1/2} \nonumber \\
& = &- {2m \choose m} ^2 \frac{2m +1}{m+1} { t + m + 1/2 \choose 4 m +2}.   \nonumber \eea
\end{theorem}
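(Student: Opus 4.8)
The final statement to prove is Theorem \ref{tw-nova}, which asserts two residue identities: part (i) says a certain triple residue vanishes identically in $t$, and part (ii) evaluates another triple residue in closed form as a multiple of a binomial coefficient.

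My plan is to mirror the strategy already used in the proofs of Theorems \ref{identity-triplet}, \ref{identity-striplet}, and \ref{identity-tstriplet}, since these residues have essentially the same structure. For part (i): I would first observe that the residue, call it $F^{(1)}(m,t)$, is (after clearing the negative-power expansions) a polynomial in $t$. Then I would run the two elementary power-counting arguments: fixing $\mathrm{Res}_{z_1}$ and comparing the highest positive power of $z_1$ coming from $(1+z_1)^{2m+1/2-t}$ against the negative powers forced by $(z_1 z_2 z_3)^{-2m-1}$, $z_1^{-2}$, and the geometric expansions $(1-z_2/z_1)^{-2m-1}$, $(1-z_3/z_1)^{-2m-1}$. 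For half-integer $t$ in a suitable range the exponent balance is always $\le -2$, giving zero; then the substitution $y_i = z_i/(1+z_i)$ (as in Step 2) produces a skew-symmetry $F^{(1)}(m,t) = \pm F^{(1)}(m, 2m-t)$ which propagates the vanishing to enough values of $t$. Crucially, in part (i) the numerator $(z_2-z_3)^{2m}$ together with the denominator structure is arranged so that the polynomial $F^{(1)}(m,t)$ has \emph{strictly more} forced zeros than its degree, forcing it to be identically zero. The cleanest way to see this: expand $(z_2-z_3)^{2m} = ((1+z_2)-(1+z_3))^{2m}$ and show term-by-term, via the same $z_2$- or $z_3$-residue power count used in Step 3 of Theorem \ref{identity-triplet}, that every monomial contributes zero — this avoids a degree count altogether.

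For part (ii): here the answer is nonzero, so after the same preliminary steps (polynomiality in $t$, the trivial half-integer zeros from power counting, and the skew-symmetry $F^{(2)}(m,t) = -F^{(2)}(m, -1/2 + \text{(shift)} - t)$ matching the shifted binomial $\binom{t+m+1/2}{4m+2}$) I would conclude that $F^{(2)}(m,t)$ is divisible by $\binom{t+m+1/2}{4m+2}$, hence equals $c_m \binom{t+m+1/2}{4m+2}$ for a constant $c_m$ depending only on $m$. To pin down $c_m$ I would evaluate the leading (or a convenient low-order) coefficient of $F^{(2)}(m,t)$ in $t$ by extracting the appropriate residue of the explicit triple sum — the same mechanism as Step 5 in Theorem \ref{identity-triplet} — reducing it to a single hypergeometric-type sum in one summation index. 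That sum should be handled by Zeilberger's algorithm as in the appendix, yielding a first-order recurrence in $m$ whose solution is (up to the normalizing factorials) exactly $\binom{2m}{m}^2 \frac{2m+1}{m+1}$; comparing with the explicit leading coefficient of the claimed right-hand side then gives $c_m = -\binom{2m}{m}^2 \frac{2m+1}{m+1}$.

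The main obstacle, as with the earlier theorems, will be Step 5: the identification of the overall constant $c_m$. Extracting the correct low-order coefficient from the triple sum requires carefully isolating which index ranges contribute (the other binomials must not kill the $t$-linear term), and the resulting single sum, while amenable to Zeilberger's algorithm, produces a recurrence whose iterated solution must be checked against the target closed form — this is where a computer algebra verification (Mathematica, as elsewhere in the paper) is essentially indispensable. Part (i), by contrast, should be routine: it is purely a power-counting argument with no constant to evaluate, since the claim is that the residue vanishes identically.
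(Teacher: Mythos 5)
Your outline coincides with the paper's treatment: the paper offers no separate argument for Theorem \ref{tw-nova}, saying only that it follows ``by using similar methods,'' i.e.\ exactly the template you describe from Theorems \ref{identity-triplet}--\ref{identity-tstriplet} (polynomiality in $t$, power-counting and the $y_i=z_i/(1+z_i)$ skew-symmetry to force zeros, divisibility by the shifted binomial, and Zeilberger's algorithm to fix the constant in (ii)). The only caution is your parenthetical shortcut in (i): a term-by-term power count after expanding $(z_2-z_3)^{2m}$ does not work for generic $t$ (the factors $(1+z_i)^{t\pm 1/2}$ have infinite expansions), so you should rely on your primary route of exhibiting more forced zeros than the degree of the polynomial in $t$.
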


\section{Appendix}

Let us briefly outline Zeilberger algorithm method following the book \cite{PWZ}.
One is generally interested in closed expression for finite sum
$$f(n)=\sum_i F(n,i),$$
where $F(n,i)$ is hypergeometric in both arguments (meaning that $F(n,i+1)/F(n,i)$ and $F(n+1,i)/F(n,i)$ are rational functions).
The main idea behind Zeilberger algorithm, also known as the method of {\em creative telescoping}, is to find another function $R(n,i)$ and the following recurrence:
$$\sum_{j=0}^k a_j(n) F(n+j,i)=R(n,i+1)-R(n,i),$$
where $a_j(n)$ are polynomials in $n$.
Assume for a moment that we are able to find such $R(n,i)$, which  is nonzero for finitely many $i$. Then summing
over $i \in \mathbb{Z}$,   yields the recursion
$$\sum_{j=0}^k a_j(n) f(n+j)=0.$$
If there is  such $R(n,i)$ Zeilberger's algorithm can find it, and this part is implemented in various Maple/Mathematica packages
(e.g. {\tt sumtools}). We should say that in all our applications $k=1$ (first order recursions), which can be easily solved and
we get closed expression for $f(n)$.

We illustrate the method on the identity (\ref{super-rec}), or equivalently (\ref{super-rec2}).
Let
$$F(m,i)=(-1)^i {-2m-1 \choose i}{-2m-1 \choose 2m-i}{2m \choose i}.$$
Zeilberger's algorithm gives
$$R(m,i)=G(m,i)F(m,i),$$
$$ G(m,i)=\frac{i^2 (-i+4m+1) \biggl((m+1)i^4-2(m+1)(5m+4)i^3+(m+1)(120m^2+168m+59)i^2}
{4(m+1)(2m+1)(-i+2m+1)^2(-i+2m+2)^2}$$
$$ \frac{-2(m+1)(260m^3+558m^2+395m+92)i+4(m+1)(172m^4+506m^3+548m^2+259m+45)\biggr)  }{4(m+1)(2m+1)(-i+2m+1)^2(-i+2m+2)^2}.$$
It is easy to prove the identity
\bea && -(m+1)^2 F(m+1,i)-3(3m+1)(3m+2)F(m,i)=R(m,i+1)-R(m,i). \label{zeil-1} \eea
We shall also need
\bea
&&  (m+1) ^2 ( F( m+1,2m) + F(m+1,2m+1) + F(m+1,2m+2) )  \nonumber  \\ && \quad + 3 (3m+1) (3m+2) F(m,2m) = R(m,2m). \label{zeil-2} \eea
Now we sum the equation (\ref{zeil-1})  over $i \in \{0,..,2m-1  \}$ and use (\ref{zeil-2}).
We obtain
$$-(m+1)^2 f(m+1) -3(3m+1)(3m+2)f(m)=0$$
as desired. This immediately gives
$$f(m)= \sum_{i=0}^{2m} (-1)^i {-2m-1 \choose i}{-2m-1 \choose 2m-i}{2m \choose i}=\frac{(-1)^m (3m)!}{m!^3}.$$

\end{document}